\pdfoutput=1
\newif\ifarxiv
\newif\ifau
\arxivtrue

\ifarxiv
\documentclass{amsart}
\usepackage[utf8]{inputenc}
\usepackage[letterpaper,left=1in,top=1in,right=1in,bottom=1in]{geometry}
\usepackage[numbers,square]{natbib}
\usepackage{graphicx}
\fi

\usepackage{amsmath,amssymb,url}
\usepackage{xcolor}
\usepackage{yhmath}
\usepackage{hyperref}
\usepackage{enumitem} 
\numberwithin{equation}{section}

\theoremstyle{plain}
\newtheorem{theorem}{Theorem}[section]
\newtheorem{lemma}[theorem]{Lemma}

\newtheorem{corollary}[theorem]{Corollary}

\theoremstyle{definition}
\newtheorem{definition}[theorem]{Definition}

\newtheorem{remark}[theorem]{Remark}
\newtheorem*{note}{Note} 

\makeatletter
\newenvironment{cleanproof}{%
  \par
  \pushQED{\qed}%
  \normalfont \topsep6\p@\@plus6\p@\relax
  \trivlist
  \item\relax
}{%
  \popQED\endtrivlist\@endpefalse
}
\makeatother

\def\set#1{#1}
\def\relation#1{#1}
\def\clone#1{\mathcal{#1}}
\def\minion#1{\mathcal{#1}}
\def\Bc{\clone{B}}
\def\Ac{\clone{A}}
\def\Lc{\clone{L}}
\def\Cc{\clone{C}}
\def\Mc{\clone{M}}
\def\Ic{\clone{I}}
\def\Pc{\Pi}

\def\atleast#1{\uparrow^{#1}}

\newdimen\xxxwidth
\setbox0=\hbox{$xx\cdots xxx$} \xxxwidth=\wd0
\def\overbounded#1{\overbrace{\hbox to\xxxwidth{\hss$#1$\hss}}}

\def\za#1{[#1]}

\def\C#1{{\mathtt{P}}(#1)}

\def\Pol{{\rm Pol}}
\def\PolId{{\rm PolId}}
\def\UP{{\rm UP}}
\def\adef#1{\catcode`#1=13 \begingroup \lccode`\~=`#1\lowercase{\endgroup\def~}}
\def\colorxy{\adef x{{\textcolor{red}{x}}}\adef y{{\textcolor{blue}{y}}}}
\def\sm#1{{\small{#1}}}
\def\smatrix#1{\setbox0=\hbox{$\displaystyle\begin{matrix}#1\end{matrix}$}\ht0=\dimexpr\ht0-8pt\relax%
\mathop{\left\{\vcenter{\box0}\right\}}^{\phantom{j}}}

\def\leb#1{\setbox0=\hbox{\vrule depth 1mm width 0pt\tiny\ref{#1}}%
\raise2mm\vbox{\kern-2mm\moveleft2mm%
\hbox{\rlap{\color{white}\vrule height\dimexpr\ht0+1mm depth\dp0 width\wd0}\box0}}%
}

\newlength{\miniaturewidth}
\ifau
\fi
\ifarxiv
\fi

\begin{document}

\title[Products of Boolean Clones]{Products of Boolean Clones up to Minion Homomorphisms}

\ifarxiv
\author[R. Olšák]{Radek Olšák}
\fi
\ifau
\corrauthor[R. Olšák]{Radek Olšák}
\fi
\address{Department of Algebra\\Faculty of Mathematics and Physics\\
Charles University\\Prague\\Czech Republic}
\urladdr{https://radek.olsak.net}
\email{radek@olsak.net}

\subjclass{08A40, 06E30, 08A70, 08B05}

\thanks{
Funded by the European Union (ERC, POCOCOP, 101071674). Views and opinions expressed are however those of the author(s) only and do not necessarily reflect those of the European Union or the European Research Council Executive Agency. Neither the European Union nor the granting authority can be held responsible for them.
}

\keywords{Boolean clones, Multisorted clones, Product clones, Minion homomorphism, Minion cores, Post lattice}

\begin{abstract}
We study the minion homomorphism poset of multisorted Boolean clones. Our main result completely determines the subposet induced by products of Boolean clones: every inequality between products is induced by inequalities between their Boolean factors. Furthermore, we determine the minion cores of all products of Boolean clones, thereby identifying canonical representatives of their minion homomorphic equivalence classes.
\end{abstract}

\maketitle

\section{Introduction}\label{sec:intro}

    In universal algebra \cite{universal1, universal2, universal3}, obtaining a complete characterization of clones on finite domains remains a long-standing goal. While Post \cite{post} successfully classified all clones on a two-element domain, the problem becomes notoriously difficult for larger domains. 

  Clones on finite domains can equivalently be described as sets of functions preserving a given collection of relations. The characterization problem is then equivalent~\cite{bodnarchuk,geiger} to characterizing relational structures up to primitive positive definability (see \cite{cspsurvey}).  
   This relational perspective links clones to the computational complexity of constraint satisfaction problems (CSPs) \cite{cspsurvey}. Motivated by this connection, \cite{wonderland, approach} proposes a coarser classification program: the classification of clones up to minion homomorphisms.
   
 Minion homomorphisms admit two equivalent characterizations. From the functional perspective, they are defined as mappings between function clones preserving composition with projections. They can equivalently be defined as mappings preserving height-one identities, i.e., identities with exactly one occurrence of a function symbol on each side.
 From the relational perspective, minion homomorphisms correspond to  primitive positive constructability \cite{wonderland}. 
 
 The existence of a minion homomorphism defines a preorder on the class of clones.
 The central object of this paper, the \emph{minion homomorphism poset}, is obtained by identifying homomorphically equivalent clones.
       Several parts of this poset have already been characterized \cite{vucaj, florian1, florian2, florian3, kapytka}. Of particular relevance to this paper is the result in \cite{vucaj}, which uses Post's classification of Boolean clones \cite{post} to determine the complete poset of Boolean clones modulo minion homomorphisms, see Figure~\ref{palanda}.         

    Our result generalizes this characterization to a multisorted setting. Multisorted clones generalize standard clones by operating over an indexed family of domains, called sorts. In this framework, an $n$-ary member of the clone is given by a tuple of functions, one for each sort, where each component takes $n$ arguments from its respective sort and returns a value of the same sort. Relations are likewise sorted, with each coordinate assigned a sort.

    Multisorted clones arise naturally in the clone theory \cite{romov} and in the study of multisorted CSPs, where variables are restricted to specific sorts \cite{david}. They also provide a way to study (single-sorted) clones over larger domains through their action on smaller domains \cite{kapytka}.  
    
    Although a complete classification of multisorted Boolean clones remains a formidable task, even up to minion homomorphic equivalence, it may ultimately prove tractable. This possibility is supported by a result of Taimanov \cite{taimanov1, taimanov2, taimanov3}, which establishes that the number of multisorted Boolean clones is at most countable (see also \cite{david} for recent progress). 

    The first main result of this paper describes the minion homomorphism poset restricted to a subclass of multisorted Boolean clones, namely, \emph{products of Boolean clones}. The $n$-ary members of the product of clones $\clone{C}_1, \dots\clone{C}_m $ are \emph{all} tuples $(f_1,\dots,f_m)$ of $n$-ary functions with $f_i \in \clone{C}_i$ for each $i$; that is, there are no constraints relating the $f_i$. From the relational perspective, these are multisorted clones that can be determined by relations whose coordinates all have the same sort. In this sense, our result complements that of \cite{kapytka}, where coordinates were allowed to have different sorts, but only relations of arity at most two were considered. 

In the minion homomorphism poset, the product of clones coincides with their meet (i.e., infimum). Thus, the above result may be viewed as taking the class of Boolean clones and closing it under meets.
    
    Some inequalities among products of clones in the minion homomorphism poset already follow from the inequalities between Boolean clones. For example, if  $\clone{A} \le \clone{U}$, $\clone{B} \le \clone{U}$, and $\clone{C} \le \clone{V}$ for some Boolean clones, then $\clone{A} \wedge \clone{B} \wedge \clone{C} \le \clone{U} \wedge \clone{V}$; therefore the product of $\clone{A}$, $\clone{B}$, and $\clone{C}$ is less than or equal to the product of $\clone{U}$ and $\clone{V}$. We prove that, in fact, these are the only inequalities in this class.

An interesting consequence for Boolean clones is the following. 
Suppose that $\clone{B}$ is a Boolean clone and that $A$ is a family of Boolean clones such that no clone in $A$ is less than or equal to $\clone{B}$ (with respect to the order in Figure~\ref{palanda}). Then there exists a set of height-one identities satisfiable in every clone in $A$ and not satisfiable in $\clone{B}$. (Indeed, the assumption implies that the product of the clones in $A$ is not below $\clone{B}$, and then the proof provides height-one identities distinguishing them.)

Recall that
the minion homomorphism order is obtained from the minion homomorphism preorder by identifying homomorphically equivalent clones.
It turns out that each equivalence class contains a natural and canonical representative, provided we work in the more general setting of minions rather than clones. A minion is defined as a family of functions $A \to B$ that is closed under composition with $A$-projections. It follows from \cite{kapytka} that every minion on finite sets (in particular, every multisorted Boolean clone) is equivalent to a unique (up to isomorphism) minion core: a minion in which every endomorphism is an automorphism. 
Our second main result, presented in the final section, is an explicit description of the minion core of each product of Boolean clones.

\begin{figure}
\centering{
{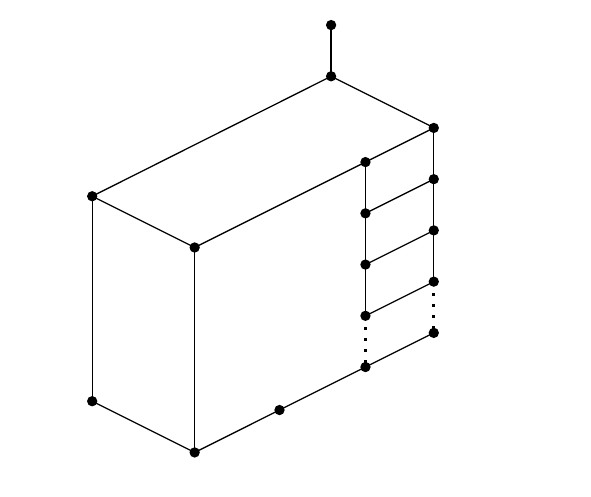}
\caption{Lattice of Boolean clones modulo minion homomorphisms}
\label{palanda}
}
\end{figure}

\section{Multisorted clones}

\begin{definition}[$\za{n}$] 
By $[n]$ we denote the set $\{1,2,\dots,n\}$. 
\end{definition}

\subsection{Function clones}

\begin{definition}[function composition] Let $\set{A}$ be a set. Let $f$ be a function $\set{A}^n\to \set{A}$ and $g_1,\dots,g_n$ be functions $\set{A}^m\to \set{A}$. The {\em composition} of $f$ with $g_1,\dots,g_n$, denoted by $f\circ (g_1,\dots,g_n)$, is the $m$-ary function defined by
$$\big(f\circ (g_1,\dots,g_n)\big)(x_1,\dots, x_m) = f(g_1(x_1,\dots,x_m),\dots,g_n(x_1,\dots,x_m))$$
\end{definition}

\begin{definition}[projections]
Let $\set{A}$ be a set. The $n$-ary {\em projection} onto the $i$-th coordinate on $\set{A}$ is the function $\pi_i^n \colon \set{A}^n\to \set{A}$ defined by
$$\pi_i^n(x_1,\dots,x_n) = x_i$$
\end{definition}

\begin{definition}[function clone]
Let $\set{A}$ be a set. A set $\clone{C}$ of functions $\set{A}^n\to \set{A}$ (for various $n$) is called a {\em function clone} if it contains all projections of all arities and is closed under composition.
\end{definition}

The term \emph{clone} will always refer to a function clone.

\subsection{Relations}

We now define compatibility of functions with relations, which provides an alternative characterization of clones.

\begin{definition}[compatibility with a relation]
  Let $\relation{R}$ be an $m$-ary relation on a set $\set{A}$ and let $f\colon \set{A}^n \to \set{A}$ be a function. We say that $f$ is {\em compatible} with $\relation{R}$ if for all $a_{i,j}\in \set{A}$ ($1\leq i\leq m, 1\leq j\leq n$) we have that $\forall j\colon(a_{1,j},\dots,a_{m,j})\in \relation{R}$ implies $(f(a_{1,1},\dots,a_{1,n}),\dots,f(a_{m,1},\dots,a_{m,n}))\in \relation{R}$.
\end{definition}

This notion is illustrated by the following diagram.

$$\begin{matrix}
f(& a_{1,1} & a_{1,2} & \dots & a_{1,n} & ) & = & b_1 \cr
f(& a_{2,1} & a_{2,2} & \dots & a_{2,n} & ) & = & b_2 \cr
& \vdots & \vdots & \ddots & \vdots &  & \vdots & \vdots \cr
f(& a_{m,1} & a_{m,2} & \dots & a_{m,n} & ) & = & b_m \cr
& \in & \in & \dots & \in &  &  & \in \cr
& R & R & \dots & R &  & \Rightarrow & R \cr
\end{matrix}$$

\begin{definition}[compatibility with set of relations]
Let $f$ be a function on a set $\set{A}$ and let $\Gamma$ be a set of relations on $\set{A}$. We say that $f$ is {\it compatible} with $\Gamma$ if it is compatible with all relations $\relation{R}\in\Gamma$.
\end{definition}

\begin{definition}[Pol]
For a set of relations $\Gamma$, we denote by $\Pol(\Gamma)$ the set of all functions compatible with every relation in $\Gamma$.
\end{definition}

\begin{theorem}
\cite{bergman} Let $\Gamma$ be a set of relations on a set $\set{A}$. Then $\Pol(\Gamma)$ is a clone.
\end{theorem}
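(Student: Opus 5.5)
We need to check the two defining properties of a clone: $\Pol(\Gamma)$ contains every projection, and it is closed under composition. Since $\Pol(\Gamma)$ is the intersection of the sets $\Pol(\{\relation{R}\})$ over $\relation{R}\in\Gamma$, and an intersection of clones on $\set{A}$ is again a clone, it suffices to treat a single $m$-ary relation $\relation{R}$. (If $\Gamma$ is empty, $\Pol(\Gamma)$ is the set of all operations on $\set{A}$, which is trivially a clone.)

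For projections, take $\pi_i^n$ and columns $(a_{1,j},\dots,a_{m,j})\in\relation{R}$ for $j=1,\dots,n$. Applying $\pi_i^n$ row by row returns $(a_{1,i},\dots,a_{m,i})$. This is the $i$-th column, so it lies in $\relation{R}$.

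For composition, let $f$ be $n$-ary and $g_1,\dots,g_n$ be $k$-ary, all compatible with $\relation{R}$, and put $h=f\circ(g_1,\dots,g_n)$. Take an $m\times k$ matrix $(a_{r,j})$ whose columns lie in $\relation{R}$.
\begin{enumerate}
\item For each $s\in\za{n}$, apply $g_s$ to the rows to get $b_{r,s}=g_s(a_{r,1},\dots,a_{r,k})$. Compatibility of $g_s$ says the column $(b_{1,s},\dots,b_{m,s})$ lies in $\relation{R}$.
\item This yields an $m\times n$ matrix $(b_{r,s})$ with all columns in $\relation{R}$. Applying $f$ row-wise gives a tuple in $\relation{R}$, by compatibility of $f$.
\item That tuple is exactly $(h(a_{1,1},\dots,a_{1,k}),\dots,h(a_{m,1},\dots,a_{m,k}))$, so $h$ is compatible with $\relation{R}$.
\end{enumerate}

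All steps are routine. The only point that needs care is the second step, where the intermediate matrix must be built correctly: its $s$-th column is the image of the original matrix under $g_s$, so that compatibility of $f$ can then be applied to it.
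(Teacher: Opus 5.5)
Your proof is correct and complete. The paper gives no proof of its own and simply cites Bergman's textbook; your direct verification is the standard argument found there: a projection returns one of the columns, and for $h=f\circ(g_1,\dots,g_n)$ the intermediate $m\times n$ matrix, whose $s$-th column comes from applying $g_s$ row-wise, has all columns in $R$. The preliminary reduction to a single relation via intersections is harmless but unnecessary, since the same verification works verbatim for every $R\in\Gamma$ at once.
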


It is well known \cite{bodnarchuk,geiger} that every clone on a finite set can be described as $\Pol(\Gamma)$ for some set of relations $\Gamma$.  

\subsection{Multisorted clones}

\begin{definition}[$k$-sorted set]
A {\it $k$-sorted set} $\set{A}=(\set{A}_1,\set{A}_2,\dots,\set{A}_k)$ is a $k$-tuple of sets.
\end{definition}

\begin{definition}[sort]
Given a $k$-sorted set $\set{A}$, we refer to an index $i\in\{1,\dots,k\}$ as a \emph{sort}.
\end{definition}

\begin{definition}[$k$-sorted function]
Let $\set{A}=(\set{A}_1,\set{A}_2,\dots,\set{A}_k)$ and $\set{B}=(\set{B}_1,\set{B}_2,\dots,\set{B}_k)$ be $k$-sorted sets. A {\em $k$-sorted function} from $\set{A}^n \to \set{B}$ is a tuple $f=(f_1,f_2,\dots,f_k)$, where, for each $1\leq i\leq k$, $f_i$ is a function $\set{A}_i^n\to \set{B}_i$.
\end{definition}

\begin{definition}[$k$-sorted composition]
Let $\set{A}$ be a $k$-sorted set. Let $f=(f_1,\dots,f_k)$ be a $k$-sorted function $\set{A}^n\to \set{A}$, and let $g_1=(g_{1,1},\dots,g_{1,k}),\dots,g_n=(g_{n,1},\dots,g_{n,k})$ be $k$-sorted functions of arity $m$. The {\em composition} $f\circ(g_1,\dots g_n)$ is defined componentwise by
$$f\circ(g_1,\dots g_n) = (f_1\circ (g_{1,1},\dots,g_{n,1}), \dots, f_k\circ (g_{1,k},\dots,g_{n,k}))$$ 
\end{definition}

\begin{definition}[$k$-sorted projection]
Let $\set{A}$ be a $k$-sorted set. The {\em $k$-sorted projection} of arity $n$ onto the $i$-th variable is the $k$-sorted function $\pi_i^n = (\pi_i^n,\dots ,\pi_i^n)$
\end{definition}

\begin{definition}[$k$-sorted clone]
Let $\set{A}$ be a $k$-sorted set. A set $\clone{C}$ of $k$-sorted functions on $\set{A}$ is called a {\it $k$-sorted clone} if it contains all the $k$-sorted projections of all arities and is closed under $k$-sorted composition.
\end{definition}

An alternative way to define a $k$-sorted clone is as a special type of single-sorted clone on the product $\prod \set{A}_i$, whose functions act independently on the individual sorts. More precisely, a $k$-sorted function $f=(f_1,f_2,\dots,f_k)$ may be viewed as a function $\hat{f}:\prod ({\mathcal A}_i)^n \to \prod {\mathcal B}_i$ defined by
\newcommand{\vmapsto}{\rotatebox[origin=c]{-90}{$\mapsto$}}
\begin{align*}
  &\left( (a^1_1, \dots, a^n_1), (a^1_2,\dots, a_2^n), \dots, (a_k^1, \dots, a_k^n)\right)\cr
  &\hskip3.5cm\vmapsto \cr
  &\left( f_1(a^1_1, \dots, a^n_1), f_2(a^1_2,\dots, a_2^n), \dots, f_k(a_k^1, \dots, a_k^n)\right) .
\end{align*}
 
Using this representation, we can define compositions and projections for multisorted functions exactly as in the single-sorted case. These definitions coincide with those given above.

\subsection{Multisorted relations}

Unlike multisorted functions, whose components act independently on each sort, multisorted relations can express interactions between different sorts.

\begin{definition}[$k$-sorted relation]
Let $\set{A}=(\set{A}_1,\dots,\set{A}_k)$ be a $k$-sorted set, and let $i_1,\dots, i_m \in [k]$. A {\em $k$-sorted relation} of arity $m$ with type $(i_1,\dots, i_m )$ is a subset $\relation{R}\subseteq \set{A}_{i_1}\times \set{A}_{i_2}\times\cdots\times \set{A}_{i_m}$.
\end{definition}

Note that the type $(i_1,\dots,i_m)$ is part of the definition of a multisorted relation. In particular, a multisorted relation cannot be specified without its type. This distinction remains important even when all sets $\set{A}_i$ are equal.

\begin{definition}[compatibility with multisorted relation]
Let $\set{A}=(\set{A}_1,\dots,\set{A}_k)$, a $k$-sorted set, let $f=(f_1,\dots,f_k)$ a $k$-sorted function $\set{A}^n\to \set{A}$, and let $\relation{R}$ a $k$-sorted relation of arity $m$ with type $(\ell_1,\dots, \ell_m)$. We say that $f$ is {\em compatible} with $\relation{R}$ if for all $a_{i,j} \in \set{A}_{\ell_i}$ ($1\leq i\leq m$, $1\leq j\leq n$) it holds that $\forall j\colon (a_{1,j},\dots,a_{m,j})\in \relation{R}$ implies that
$$(f_{\ell_1} (a_{1,1},\dots,a_{1,n}),\dots,f_{\ell_m}(a_{m,1},\dots,a_{m,n}))\in \relation{R}.$$
\end{definition}

This notion is illustrated by the following diagram.

$$\begin{matrix}
f_{\ell_1}(& a_{1,1} & a_{1,2} & \dots & a_{1,n} & ) & = & b_1 \cr
f_{\ell_2}(& a_{2,1} & a_{2,2} & \dots & a_{2,n} & ) & = & b_2 \cr
& \vdots & \vdots & \ddots & \vdots &  & \vdots & \vdots \cr
f_{\ell_m}(& a_{m,1} & a_{m,2} & \dots & a_{m,n} & ) & = & b_m \cr
& \in & \in & \dots & \in &  &  & \in \cr
& R & R & \dots & R &  & \Rightarrow & R \cr
\end{matrix}$$

In this paper, we focus on the special case of multisorted relations whose type consists of a single sort. That is, all indices in the type are equal. In the definition above, this means that all functions $f_{\ell_i}$ are identical. Consequently, compatibility with such relations amounts to imposing constraints on each sort independently.

\begin{definition}[simple $k$-sorted relation]
A {\em simple $k$-sorted relation} is $k$-sorted relation of type $(i,i,\dots,i)$ for some $i$.
\end{definition}

We define the clone product only for finitely many factors. As we shall see, allowing infinite products would not make a difference in our context.

\begin{definition}[clone product]
Let $\clone{C}_1,\clone{C}_2,\dots,\clone{C}_k$ be clones on a set $\set{A}$. The {\em product} of $\clone{C}_1,\clone{C}_2,\dots,\clone{C}_k$, denoted by $\C{\clone{C}_1,\clone{C}_2,\dots,\clone{C}_k}$ is the $k$-sorted clone consisting of all $k$-sorted functions $f = (f_1,\dots,f_k)$, where $f_i\in \clone{C}_i$.
\end{definition}

Directly from the definition of compatibility and a simple $k$-sorted relation we obtain the following lemma. 

\begin{lemma}
  \label{all-clones-are-products}
  Let $\Gamma$ be a set of simple $k$-sorted relations. For each $i\in[k]$, let $\Gamma_i$ denote the subset of $\Gamma$ containing all relations of type $(i,i,\dots,i)$. Let $\clone{C}$ be the $k$-sorted clone given by $\clone{C}=\Pol(\Gamma)$. Then $\clone{C}=\C{\Pol({\Gamma}_1),\dots,\Pol({\Gamma}_k)}$.
\end{lemma}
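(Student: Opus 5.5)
The plan is to prove the equality by double inclusion, unwinding the definitions of compatibility. The key observation is that for a simple relation $R$ of type $(i,i,\dots,i)$, the compatibility condition for a $k$-sorted function $f=(f_1,\dots,f_k)$ mentions only the component $f_i$. Indeed, in the definition of compatibility with a multisorted relation, all indices $\ell_1,\dots,\ell_m$ equal $i$. Every entry $a_{r,j}$ lies in $\set{A}_i$, and the condition reads exactly as the single-sorted compatibility of $f_i$ with $R$, viewed as an ordinary $m$-ary relation on $\set{A}_i$. So I first record the following claim: $f$ is compatible with $R\in\Gamma_i$ if and only if $f_i$ is compatible with $R$ in the single-sorted sense. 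This is a literal comparison of the two definitions.

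With this claim the two inclusions are immediate. Since every relation in $\Gamma$ is simple, $\Gamma$ is the disjoint union of $\Gamma_1,\dots,\Gamma_k$. Hence $f\in\Pol(\Gamma)$ if and only if, for every $i\in[k]$ and every $R\in\Gamma_i$, $f$ is compatible with $R$. By the claim, this holds if and only if, for every $i$, the component $f_i$ is compatible with every relation in $\Gamma_i$, that is, $f_i\in\Pol(\Gamma_i)$. By the definition of the clone product, the latter condition says precisely that $f\in\C{\Pol(\Gamma_1),\dots,\Pol(\Gamma_k)}$.

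There is one bookkeeping point to check. Each $\Pol(\Gamma_i)$ is a clone on $\set{A}_i$ by the theorem cited from \cite{bergman}, which is needed for the product to make sense. The definition of the product was stated for clones on a single set $\set{A}$, so I would read it here with the $i$-th factor living on $\set{A}_i$. The case $\Gamma_i=\emptyset$ is also harmless: $\Pol(\emptyset)$ is the full clone on $\set{A}_i$, and in that case the $i$-th component of $f$ is unconstrained on both sides.

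I do not expect a genuine obstacle. The only step requiring care is the claim, and in particular making sure that compatibility for fixed arity $n$ matches on both sides. It does, because the quantification is over the same tuples $a_{r,j}\in\set{A}_i$.
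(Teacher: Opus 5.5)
Your proposal is correct and takes the same approach as the paper, which gives no separate argument and just says the lemma follows directly from the definitions of compatibility and of a simple $k$-sorted relation. Your observation that compatibility with a relation of type $(i,\dots,i)$ involves only the component $f_i$ is exactly that unwinding. Reading the product with its $i$-th factor living on $\set{A}_i$ is a sensible clarification the paper leaves implicit.
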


\subsection{Homomorphisms}

\begin{definition}[minor]
Let $\set{A}$ be a set, let $f\colon \set{A}^n\to \set{A}$ be a function, and let $i_1,i_2,\dots,i_n\in [m]$. The function $f\circ (\pi_{i_1}^m,\pi_{i_2}^m,\dots,\pi_{i_n}^m)$ is called a {\em minor} of $f$.
\end{definition}

Minors can be equivalently defined by merging and permuting variables (and introducing dummy variables). Indeed, for example, the function $g$ defined from $f$ by $\forall x,y: g(x,y) = f(y,y,x)$ is the same as $g = f \circ (\pi_2^2,\pi_2^2,\pi_1^2)$.

\begin{definition}[minion homomorphism]
Let $\clone{A}$ and $\clone{B}$ be clones. A mapping $\zeta\colon\clone{A}\to \clone{B}$ is called a {\em minion homomorphism} if it satisfies the following conditions:
\begin{itemize}
\item {\em Preservation the arities}: 
a function of arity $n$ in $\clone{A}$ gets mapped to a function of arity $n$ in $\clone{B}$.  
\item {\em Preservation of minors}: for any $n$-ary $f\in \clone{A}$  and any $i_1,i_2,\dots,i_n\in [m]$, we have 
$$\zeta(f\circ(\pi_{i_1}^m,\dots,\pi_{i_n}^m)) = \zeta(f)\circ(\pi_{i_1}^m,\dots,\pi_{i_n}^m) $$
\end{itemize}
\end{definition}

This definition originates from the more general notion of a minion \cite{approach}, which is a set of functions between possibly different sets. Since a minion does not come equipped with distinguished projections in the same way as a clone, preservation of projections is not part of the general definition of a minion homomorphism. However, in our setting, the following lemma shows that we may equivalently assume that minion homomorphisms preserve projections.

\begin{definition}[idempotent function]
Let $\set{A}$ be a set and let $f\colon \set{A}^n\to \set{A}$ be a function. We say that $f$ is {\em idempotent} if $f(x,x,\dots,x) = x$ for all $x\in\set{A}$.
\end{definition}

\begin{definition}[idempotent clone]
A clone $\clone{A}$ is called {\em idempotent} if every function in $ \clone{A}$ is idempotent.
\end{definition}

\begin{lemma}
\label{projtoproj}
Let $\clone{A}$ and $\clone{B}$ be idempotent clones, and let $\zeta:\clone{A}\to \clone{B}$ be a minion homomorphism. Then $\zeta(\pi_i^n) = \pi_i^n$ for every projection $\pi_i^n$.
\end{lemma}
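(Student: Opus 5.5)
The plan is to express the projection $\pi_i^n$ as a minor of a unary function and then use idempotency on both sides. In an idempotent clone the only unary function is the identity $\pi_1^1$: a unary $u\in\clone{A}$ satisfies $u(x)=x$ for all $x$ by idempotency. The projection $\pi_i^n$ is itself a minor of $\pi_1^1$, namely
$$\pi_i^n = \pi_1^1\circ(\pi_i^n),$$
which is the minor with $m=n$, arity $1$, and index $i_1=i$.

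The steps, in order:
\begin{enumerate}
\item Since $\zeta$ preserves arities, $\zeta(\pi_1^1)$ is a unary function in $\clone{B}$.
\item Since $\clone{B}$ is idempotent, $\zeta(\pi_1^1)=\pi_1^1$.
\item Preservation of minors then gives
$$\zeta(\pi_i^n)=\zeta\bigl(\pi_1^1\circ(\pi_i^n)\bigr)=\zeta(\pi_1^1)\circ(\pi_i^n)=\pi_1^1\circ(\pi_i^n)=\pi_i^n,$$
which is the claim.
\end{enumerate}

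The only point needing care is checking that the minor notation of the definition applies with the unary $f=\pi_1^1$, $n=1$ and $i_1=i\in[m]$ with $m$ the target arity. This is immediate, so I expect no real obstacle.

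Note that idempotency of $\clone{A}$ is not actually needed; only $\clone{B}$ being idempotent matters, together with the fact that $\pi_1^1$ lies in every clone.

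In the multisorted setting the same argument works verbatim. The $k$-sorted projection $\pi_i^n$ is the corresponding minor of the $k$-sorted unary projection. A unary member of an idempotent $k$-sorted clone (idempotent in each component) must be the identity tuple $(\pi_1^1,\dots,\pi_1^1)$.
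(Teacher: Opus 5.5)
Your proof is correct and is essentially the paper's argument: the identity $\pi_1^1$ must be sent to the unique unary member of the idempotent clone $\clone{B}$, namely itself. The identity $\pi_i^n=\pi_1^1\circ(\pi_i^n)$ together with preservation of minors then gives $\zeta(\pi_i^n)=\pi_i^n$. Your side remark that only idempotency of $\clone{B}$ is used is also correct, since $\pi_1^1\in\clone{A}$ holds for every clone.
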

\begin{proof}
Since $\clone{A}$ and $\clone{B}$ are idempotent, they contain only one unary function, the identity function $\tau(x) = x$. Thus, it has to be mapped to itself by $\zeta$. Note that $\pi_i^n = \tau\circ(\pi_i^n)$ and thus $\zeta(\pi_i^n) = \zeta(\tau)\circ(\pi_i^n) = \tau\circ(\pi_i^n) = \pi_i^n$. 
\end{proof}

\subsection{Preorder}

Minion homomorphisms induce a preorder on the class of all clones.

\begin{definition}[$\leq$]
Let $\clone{A}$ and $\clone{B}$ be clones. We write $\clone{A}\leq \clone{B}$ if there exists a minion homomorphism from $\clone{A}$ to $ \clone{B}$.
\end{definition}

\begin{lemma}
The relation $\leq$ is a preorder.
\end{lemma}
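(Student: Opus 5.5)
To show that $\leq$ is a preorder we must verify reflexivity and transitivity, and both follow directly from the definition of a minion homomorphism. Note that the definition of $\leq$ applies equally to multisorted clones, where minors are formed componentwise. The argument below uses only the two defining properties, preservation of arities and preservation of minors, so it works verbatim in that setting.

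For reflexivity, take the identity map $\mathrm{id}\colon\clone{A}\to\clone{A}$. It trivially preserves arities. It also preserves minors, since $\mathrm{id}(f\circ(\pi_{i_1}^m,\dots,\pi_{i_n}^m)) = f\circ(\pi_{i_1}^m,\dots,\pi_{i_n}^m) = \mathrm{id}(f)\circ(\pi_{i_1}^m,\dots,\pi_{i_n}^m)$. Hence $\clone{A}\leq\clone{A}$.

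For transitivity, suppose $\zeta\colon\clone{A}\to\clone{B}$ and $\xi\colon\clone{B}\to\clone{C}$ are minion homomorphisms, and consider the composite $\xi\circ\zeta\colon\clone{A}\to\clone{C}$.
\begin{itemize}
\item It preserves arities, because an $n$-ary $f$ is sent by $\zeta$ to an $n$-ary member of $\clone{B}$, which $\xi$ then sends to an $n$-ary member of $\clone{C}$.
\item It preserves minors. For $n$-ary $f\in\clone{A}$ and $i_1,\dots,i_n\in[m]$, the function $f\circ(\pi_{i_1}^m,\dots,\pi_{i_n}^m)$ is again in $\clone{A}$ because $\clone{A}$ is a clone. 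Applying the minor-preservation property of $\zeta$ and then of $\xi$ (the latter to the $n$-ary function $\zeta(f)\in\clone{B}$) gives
\[
\xi(\zeta(f\circ(\pi_{i_1}^m,\dots,\pi_{i_n}^m))) = \xi(\zeta(f)\circ(\pi_{i_1}^m,\dots,\pi_{i_n}^m)) = \xi(\zeta(f))\circ(\pi_{i_1}^m,\dots,\pi_{i_n}^m).
\]
\end{itemize}
Thus $\clone{A}\leq\clone{C}$.

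There is no real obstacle in this argument. The only point needing care is the second equality in the display: minor preservation for $\xi$ must be applied to the $n$-ary function $\zeta(f)$, which is legitimate precisely because $\zeta$ preserves arities.
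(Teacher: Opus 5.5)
Your proof is correct. It is the routine verification the paper has in mind, which is why the paper states this lemma without proof: reflexivity via the identity map, and transitivity via the composite $\xi\circ\zeta$, where arity preservation of $\zeta$ is what lets you apply minor preservation of $\xi$ to $\zeta(f)$.
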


The preorder $\leq$ naturally induces an equivalence relation on clones.

\begin{definition}[$\sim$]
Let $\clone{A}$ and $\clone{B}$ be clones. We say $\clone{A}$ and $\clone{B}$ are {\it equivalent}, denoted by $\clone{A}\sim \clone{B}$ if $\clone{A}\leq \clone{B}$ and $\clone{B}\leq \clone{A}$.
\end{definition}

The minion homomorphism poset is obtained from the preoder in the standard way.

\begin{definition}[minion homomorphism poset]
The \emph{minion homomorphism poset} is the partially ordered set whose elements are $\sim$-equivalence classes of clones, with the order given by $\le$.    
\end{definition}

The following definition and lemma provide a strategy for showing that no minion homomorphism exists from one clone to another.

\begin{definition}
A {\em height-one identity} is a formal expression of the form

$$f_1(v_1,\dots,v_n)=f_2(w_1,\dots,w_m),$$

where $f_1$ and $f_2$ are function symbols (not necessarily distinct),
and $v_1,\dots,v_n,w_1,\dots,w_m$ are variables (not necessarily distinct).

Let $S$ be a set of height-one identities and let $\clone{A}$ be a clone.
We say that $\clone{A}$ {\em satisfies} $S$, and write
$\clone{A}\models S$, if the function symbols occurring in $S$ can be
interpreted by functions from $\clone{A}$ of the corresponding arities
so that all identities in $S$ hold universally.
\end{definition}

For example,
$f(x,x,y)=f(z,x,x)$
is a height-one identity. A clone $\clone{A}$ satisfies this identity if it
contains a ternary function $f$ such that
$\forall x,y,z\colon\ f(x,x,y)=f(z,x,x)$.
Observe that this condition can be expressed entirely in terms of minors:
$f(\pi_1,\pi_1,\pi_2)=f(\pi_3,\pi_1,\pi_1)$.
Since minion homomorphisms preserve minors, they also preserve the satisfaction
of height-one identities, and more generally of sets of height-one
identities. Consequently, we obtain the following lemma.

\begin{lemma}\label{identity-proof-strategy}
Let $S$ be a set of height-one identities, and let $\clone{A}$ and
$\clone{B}$ be clones. If
$\clone{A}\models S$
and
$\clone{B}\not\models S$,
then
$\clone{A}\not\leq \clone{B}$.
\end{lemma}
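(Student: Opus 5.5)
We argue by contraposition: assuming a minion homomorphism $\zeta\colon\clone{A}\to\clone{B}$ exists and $\clone{A}\models S$, we show $\clone{B}\models S$.

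First fix witnesses in $\clone{A}$. Since $\clone{A}\models S$, there is an interpretation assigning to each function symbol $f$ occurring in $S$, of arity $n_f$, a function $f^{\clone{A}}\in\clone{A}$ of arity $n_f$ such that every identity in $S$ holds universally. In $\clone{B}$ we interpret $f$ by $f^{\clone{B}}:=\zeta(f^{\clone{A}})$. This is legitimate because $\zeta$ preserves arities, so $f^{\clone{B}}$ is an $n_f$-ary member of $\clone{B}$.

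Next, rewrite a single identity in terms of minors. Take an identity $f_1(v_1,\dots,v_n)=f_2(w_1,\dots,w_m)$ from $S$. Let the variables occurring in it be enumerated as $x_1,\dots,x_k$, so that $v_j=x_{i_j}$ and $w_j=x_{i'_j}$ for suitable $i_j,i'_j\in[k]$. The identity holding universally for an interpretation is then exactly the equality of $k$-ary functions
$$f_1\circ(\pi^k_{i_1},\dots,\pi^k_{i_n}) = f_2\circ(\pi^k_{i'_1},\dots,\pi^k_{i'_m}),$$
as in the example preceding Lemma~\ref{identity-proof-strategy}. This equality holds in $\clone{A}$ for the chosen witnesses. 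Applying $\zeta$ to both sides and using preservation of minors gives
$$\zeta(f_1^{\clone{A}})\circ(\pi^k_{i_1},\dots,\pi^k_{i_n}) = \zeta(f_2^{\clone{A}})\circ(\pi^k_{i'_1},\dots,\pi^k_{i'_m}).$$
This says precisely that the identity holds in $\clone{B}$ under the interpretation $f\mapsto f^{\clone{B}}$.

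The interpretation in $\clone{B}$ is defined symbol by symbol and does not depend on the identity, so it witnesses all identities of $S$ at once; hence $\clone{B}\models S$, contradicting the hypothesis $\clone{B}\not\models S$. Therefore no minion homomorphism $\clone{A}\to\clone{B}$ exists, i.e.\ $\clone{A}\not\leq\clone{B}$.

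There is no real obstacle. The only point needing care is the translation of an identity into an equality of minors. One must use the common variable set of both sides, so that both sides are minors of the same arity $k$. One must also check that universal validity corresponds exactly to equality of these $k$-ary functions, including when a variable occurs on only one side; in that case it is simply a dummy variable of the other side.
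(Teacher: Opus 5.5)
Your proof is correct and follows the paper's own reasoning: the paper likewise observes (in the text preceding the lemma) that a height-one identity is an equality of minors, so minion homomorphisms, which preserve minors, carry witnesses in $\clone{A}$ to witnesses in $\clone{B}$. You have simply spelled out the details the paper leaves implicit, namely the common variable set, dummy variables, and the fact that a single symbol-by-symbol interpretation in $\clone{B}$ works for all of $S$ at once.
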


\section{Classification of products of Boolean clones}

In this section, we determine the subposet of the minion homomorphism poset that is
induced by products of Boolean clones. 
We begin by introducing a simpler order, $\preceq$, based on the inequalities among Boolean clones depicted in Figure \ref{palanda}. We then show that $\preceq$ agrees with $\leq$ for certain pairs of products. Finally, we prove that $\preceq$ and $\leq$ coincide on all pairs of products.

\subsection{Product as a meet}

\begin{lemma}
The product of clones $\C{\clone{X}_1,\clone{X}_2,\dots, \clone{X}_n}$ is the meet $\bigwedge \{\clone{X}_1,\clone{X}_2,\dots,\clone{X}_n\}$ in the minion homomorphism poset. 
\end{lemma}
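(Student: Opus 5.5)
To show that $\C{\clone{X}_1,\dots,\clone{X}_n}$ is the meet, I will verify the two defining properties of an infimum in the preorder $\leq$. These are that it is a lower bound, and that it lies above every other lower bound. Throughout, minion homomorphisms are understood in the general sense that also applies to multisorted clones: maps preserving arities and minors, where minors of a $k$-sorted function are taken componentwise with the same index tuple in every sort.

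\emph{Lower bound.} For each $i$, I use the projection map $\zeta_i\colon \C{\clone{X}_1,\dots,\clone{X}_n}\to\clone{X}_i$ given by $(f_1,\dots,f_n)\mapsto f_i$. Arity is clearly preserved. Minors are computed sortwise, since
\[
(f_1,\dots,f_n)\circ(\pi_{i_1}^m,\dots,\pi_{i_k}^m)=\big(f_1\circ(\pi_{i_1}^m,\dots,\pi_{i_k}^m),\dots,f_n\circ(\pi_{i_1}^m,\dots,\pi_{i_k}^m)\big),
\]
and this follows directly from the definition of $k$-sorted composition and $k$-sorted projections. Hence $\zeta_i$ commutes with taking minors, so the product is $\leq \clone{X}_i$ for every $i$.

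\emph{Greatest lower bound.} Let $\clone{D}$ be any (possibly multisorted) clone with minion homomorphisms $\xi_i\colon\clone{D}\to\clone{X}_i$ for all $i$. I define $\xi\colon \clone{D}\to\C{\clone{X}_1,\dots,\clone{X}_n}$ by $\xi(g)=(\xi_1(g),\dots,\xi_n(g))$. Its image lies in the product because each $\xi_i(g)\in\clone{X}_i$ has the same arity as $g$, and the product contains \emph{all} such tuples with no compatibility constraints between sorts. Preservation of minors then follows componentwise from the displayed identity together with the fact that each $\xi_i$ preserves minors. Therefore $\clone{D}\leq$ the product.

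Finally, I pass to $\sim$-classes: the two properties above say exactly that the class of the product is the infimum of the classes of the $\clone{X}_i$, which is independent of representatives because $\leq$ is a preorder. There is no real obstacle in this argument. The only point needing care is that the meet is taken in the poset that includes multisorted clones, so the notion of minion homomorphism must be read in that generality. The key fact used is the absence of cross-sort constraints in the product, which is exactly what lets the tuple map $\xi$ land inside it.
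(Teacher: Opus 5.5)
Your proof is correct and follows essentially the same route as the paper: the $i$-th coordinate projection shows the product is a lower bound, and the tuple map $g\mapsto(\xi_1(g),\dots,\xi_n(g))$ shows it lies above every lower bound. You also make explicit two details the paper leaves implicit, namely that minors of multisorted functions are computed componentwise and that the product imposes no cross-sort constraints, so the tuple map lands in it.
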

\begin{proof}
Let $\clone{P} = \C{\clone{X}_1,\clone{X}_2,\dots, \clone{X}_n}$.

\begin{itemize}
\item For every $i$ we have $\clone{P}\leq \clone{X}_i$, since there is a minion homomorphism $\zeta: \clone{P}\to \clone{X}_i$ defined as the $i$-th projection.
\item Suppose that $\clone{M}$ is a clone such that $\forall i:\clone{M} \leq \clone{X}_i$, then $\clone{M}\leq \clone{P}$. Indeed, let $\zeta_i:\clone{M}\to\clone{X}_i$ be a minion homomorphism witnessing $\clone{M}\leq \clone{X}_i$. Then the mapping $\zeta(f) = (\zeta_1(f), \zeta_2(f), \dots, \zeta_n(f)$ defines a minion homomorphism $\zeta:\clone{M}\to\clone{P}$.
\end{itemize}
This gives us that $\clone{P}$ is really the meet $\bigwedge \{\clone{X}_1,\clone{X}_2,\dots,\clone{X}_n\}$.
\end{proof}

The following lemmas record several simple but important properties of the meet/product that will be used later.

\begin{lemma}
\label{perm}
  Let $\C{\clone{X}_1,\clone{X}_2,\dots, \clone{X}_n}$ be a product of clones. Let $p$ be a permutation of $[n]$. Then $\C{\clone{X}_1,\clone{X}_2,\dots, \clone{X}_n}\sim\C{\clone{X}_{p(1)},\clone{X}_{p(2)},\dots,\clone{X}_{p(n)}}$.
\end{lemma}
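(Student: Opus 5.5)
The plan is to write down minion homomorphisms in both directions that simply permute the coordinates. Put $\clone{P}=\C{\clone{X}_1,\dots,\clone{X}_n}$ and $\clone{Q}=\C{\clone{X}_{p(1)},\dots,\clone{X}_{p(n)}}$. Define $\zeta\colon\clone{P}\to\clone{Q}$ by
$$\zeta(f_1,\dots,f_n)=(f_{p(1)},\dots,f_{p(n)}).$$
This lands in $\clone{Q}$: the $i$-th component $f_{p(i)}$ lies in $\clone{X}_{p(i)}$, which is exactly the $i$-th factor of $\clone{Q}$. Since the product imposes no constraints linking the components, every such tuple belongs to $\clone{Q}$. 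Arity is preserved because all components of a multisorted function share the arity of the tuple.

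To see that $\zeta$ preserves minors, recall that $k$-sorted composition acts componentwise and the $k$-sorted projections are $(\pi_i^m,\dots,\pi_i^m)$. Hence
$$(f_1,\dots,f_n)\circ(\pi_{i_1}^m,\dots,\pi_{i_r}^m)=\bigl(f_1\circ(\pi_{i_1}^m,\dots,\pi_{i_r}^m),\dots,f_n\circ(\pi_{i_1}^m,\dots,\pi_{i_r}^m)\bigr).$$
Permuting the components commutes with applying the same minor operation to each of them, so $\zeta$ is a minion homomorphism and $\clone{P}\le\clone{Q}$.

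For the reverse inequality, apply the same construction with the inverse permutation $p^{-1}$, starting from $\clone{Q}$. Its $i$-th factor is $\clone{X}_{p(i)}$, so reindexing by $p^{-1}$ gives back $\clone{P}$. This yields $\clone{Q}\le\clone{P}$, and therefore $\clone{P}\sim\clone{Q}$.

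Alternatively, one can invoke the preceding lemma that the product is the meet $\bigwedge\{\clone{X}_1,\dots,\clone{X}_n\}$, which does not depend on the order of the factors. The direct construction avoids any subtlety about multisorted clones over different sort lists, though.

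There is no real obstacle here. The only point needing care is that both products live on the same $n$-sorted Boolean set, so permuting components gives a legitimate multisorted function. I expect the proof to be a routine verification.
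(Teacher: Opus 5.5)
Your proof is correct and is essentially the paper's argument. The paper states this lemma without proof, as an immediate consequence of the preceding lemma identifying the product with the meet $\bigwedge\{\clone{X}_1,\dots,\clone{X}_n\}$ (your ``alternative''). Applying that lemma's construction with the coordinate projections $f\mapsto f_{p(i)}$ as the witnesses $\zeta_i$ yields exactly your map $\zeta$, so the two routes coincide. Your verification also never uses that the domain is Boolean, which is as it should be, since the statement concerns clones on an arbitrary set $\set{A}$.
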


\begin{lemma}
  \label{ineq-triv}
  Let $\C{\clone{X}_1,\clone{X}_2,\dots,\clone{X}_n}$ and $\C{\clone{Y}_1,\clone{Y}_2,\dots,\clone{Y}_m}$ be products of clones. Suppose that there exist indices $j_1,j_2,\dots,j_m\in[n]$ such that $\clone{X}_{j_i}\leq \clone{Y}_i$ for every $i\in[m]$. Then $\C{\clone{X}_1,\clone{X}_2,\dots,\clone{X}_n}\leq \C{\clone{Y}_1,\clone{Y}_2,\dots,\clone{Y}_m}$.
\end{lemma}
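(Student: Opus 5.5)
The plan is to use the meet characterization of the product from the preceding lemma, which makes this almost immediate. Write $\clone{P}=\C{\clone{X}_1,\dots,\clone{X}_n}$ and $\clone{Q}=\C{\clone{Y}_1,\dots,\clone{Y}_m}$. By the universal property established in the proof that the product is the meet, it suffices to show that $\clone{P}\leq \clone{Y}_i$ for every $i\in[m]$. This is the second bullet of that proof, applied with $\clone{M}=\clone{P}$ and the clones $\clone{Y}_1,\dots,\clone{Y}_m$.

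Fix $i\in[m]$. The first bullet of the meet lemma gives a minion homomorphism $\clone{P}\to\clone{X}_{j_i}$, namely the $j_i$-th coordinate projection $(f_1,\dots,f_n)\mapsto f_{j_i}$. By hypothesis $\clone{X}_{j_i}\leq\clone{Y}_i$, so there is a minion homomorphism $\zeta_i\colon\clone{X}_{j_i}\to\clone{Y}_i$. Composing the two gives a minion homomorphism $\clone{P}\to\clone{Y}_i$, since $\leq$ is a preorder and so transitive. Composition of maps that preserve arities and minors again preserves arities and minors.

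Finally, combine these coordinatewise. The map
\[
\zeta(f_1,\dots,f_n)=(\zeta_1(f_{j_1}),\dots,\zeta_m(f_{j_m}))
\]
takes values in $\clone{Q}$, because each component lies in the corresponding $\clone{Y}_i$ and the product imposes no constraints between coordinates. It preserves minors because minors in a multisorted clone are computed componentwise. Therefore $\clone{P}\leq\clone{Q}$.

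There is no real obstacle here. The only point to check is that the meet lemma's universal property applies when $\clone{M}$ is itself a multisorted clone. The componentwise definition of minors for $k$-sorted functions guarantees this.
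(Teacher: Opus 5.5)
Your proposal is correct and matches the paper's intended argument. The paper gives no separate proof: it lists this lemma as an immediate consequence of the product being the meet, so that $\C{\clone{X}_1,\dots,\clone{X}_n}\leq\clone{X}_{j_i}\leq\clone{Y}_i$ for each $i$, and hence the product lies below $\bigwedge_i\clone{Y}_i=\C{\clone{Y}_1,\dots,\clone{Y}_m}$. Your explicit map $(f_1,\dots,f_n)\mapsto(\zeta_1(f_{j_1}),\dots,\zeta_m(f_{j_m}))$ simply unwinds that argument.
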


\begin{lemma}
  \label{incomparable-prod}
  Every finite product $\C{\clone{X}_1,\clone{X}_2,\dots,\clone{X}_n}$ of clones is equivalent to a product of the form $\C{\clone{Y}_1,\clone{Y}_2,\dots,\clone{Y}_m}$, where the clones $\clone{Y}_1, \clone{Y}_2, \dots, \clone{Y}_m$ are pairwise incomparable. Moreover for every $i\in[m]$ there exist $j\in[n]$ such that $Y_i\sim X_j$.
\end{lemma}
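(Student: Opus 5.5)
The plan is to repeatedly discard factors that lie above some other factor, using Lemma~\ref{ineq-triv} for both directions of the equivalence. I will argue by induction on $n$. If the clones $\clone{X}_1,\dots,\clone{X}_n$ are already pairwise incomparable, take $m=n$ and $\clone{Y}_i=\clone{X}_i$. Otherwise there are distinct indices $a\neq b$ with $\clone{X}_a\leq\clone{X}_b$. Note that comparable here includes equivalent clones, since $\sim$ means $\leq$ both ways. I then remove the factor $\clone{X}_b$, and show the shorter product is equivalent to the original.

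By Lemma~\ref{perm} we may assume $b=n$, so the shorter product is $\C{\clone{X}_1,\dots,\clone{X}_{n-1}}$. The inequality $\C{\clone{X}_1,\dots,\clone{X}_n}\leq\C{\clone{X}_1,\dots,\clone{X}_{n-1}}$ follows from Lemma~\ref{ineq-triv} with $j_i=i$ for $i\in[n-1]$.

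For the converse inequality, apply Lemma~\ref{ineq-triv} with $j_i=i$ for $i<n$ and $j_n=a$. This index is valid because $a\neq n$ lies in $[n-1]$ and $\clone{X}_a\leq\clone{X}_n$. Hence the two products are equivalent. The induction hypothesis applied to the product with $n-1$ factors gives pairwise incomparable $\clone{Y}_1,\dots,\clone{Y}_m$. Each $\clone{Y}_i$ is $\sim$ to some $\clone{X}_j$ with $j\in[n-1]\subseteq[n]$, and transitivity of $\sim$ finishes the step.

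The base case $n=1$ is trivial, since a single clone is vacuously pairwise incomparable. There is no real obstacle here. The only care needed is the degenerate bookkeeping: $m\geq1$ always holds, and we never remove both members of an equivalent pair, because we remove only one factor at a time and re-examine the rest.

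If ``pairwise incomparable'' is meant for distinct indices, a repeated factor $\clone{X}_a=\clone{X}_b$ is handled by the same step. In fact the argument produces $\clone{Y}_i$ that are literally among the $\clone{X}_j$, which is stronger than the ``moreover'' clause requires.
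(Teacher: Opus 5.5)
Your proof is correct. It is essentially the argument the paper intends: the paper states this lemma without proof right after Lemma~\ref{ineq-triv}, and you discard a factor $\clone{X}_b$ lying above some other factor $\clone{X}_a$, applying Lemma~\ref{ineq-triv} in both directions. The appeal to Lemma~\ref{perm} is not actually needed, since Lemma~\ref{ineq-triv} allows an arbitrary choice of indices $j_i$, so you can delete $\clone{X}_b$ in place.
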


These lemmas motivate the following definition of the ``trivial'' order on products.

\begin{definition}[$\preceq$]
Let $\C{\clone{X}_1,\clone{X}_2,\dots,\clone{X}_n}, \C{\clone{Y}_1,\clone{Y}_2,\dots,\clone{Y}_m}$ be two products of clones. We write $\C{\clone{X}_1,\clone{X}_2,\dots,\clone{X}_n}\preceq \C{\clone{Y}_1,\clone{Y}_2,\dots,\clone{Y}_m}$ if there exist indices $j_1,j_2,\dots,j_m \in [n]$ such that $\clone{X}_{j_i}\leq \clone{Y}_i$ for every $i\in[m]$.
\end{definition}

\begin{definition}[trivial equivalence]
Let $\C{\clone{X}_1,\clone{X}_2,\dots,\clone{X}_n}, \C{\clone{Y}_1,\clone{Y}_2,\dots,\clone{Y}_m}$ be two products of clones. We say that they are {\em trivially equivalent} if
\begin{align}
\C{\clone{X}_1,\clone{X}_2,\dots,\clone{X}_n}&\preceq \C{\clone{Y}_1,\clone{Y}_2,\dots,\clone{Y}_m}\cr
\C{\clone{Y}_1,\clone{Y}_2,\dots,\clone{Y}_m}&\preceq \C{\clone{X}_1,\clone{X}_2,\dots,\clone{X}_n}.
\end{align}

We denote this by $\C{\clone{X}_1,\clone{X}_2,\dots,\clone{X}_n}\approx \C{\clone{Y}_1,\clone{Y}_2,\dots,\clone{Y}_m}$
\end{definition}

Lemma~\ref{ineq-triv} implies that any two trivially equivalent products are equivalent with respect to minion homomorphisms.

\subsection{Notation for clones on 2-element domain}

From now on, we work over the two-element domain $\{0,1\}$ equipped with the natural order $\leq$, addition modulo $2$, and negation (defined by $\overline{x}=1-x$). 

Note that, apart from the top element of the poset, all clones considered in this paper are idempotent. This follows from the following observations.
\begin{itemize}
  \item If a clone $\clone{Z}$ contains a constant function $f$, then for any clone $\clone{X}$, the mapping that sends every function in $\clone{Z}$ to $f$ is a minion homomorphism. Consequently, $\clone{Z}$ is (a representative of) the top element in the minion homomorphism poset.
  \item Let $\clone{Z}$ be a Boolean clone that does not contain a constant function. Define a mapping $\zeta:\clone{Z}\to\clone{Z}$  by $\zeta(f) = f_1\circ f,$ where $f_1$ denotes the unary minor of $f$ (which is either the identity or the negation). Then $\zeta$ is an endomorphism whose image is idempotent. Therefore, $\clone{Z}$ is equivalent to an idempotent clone.  
\end{itemize}

\begin{definition}[$\PolId$] Let $R$ be a set of relations. We define $\PolId(R)$ to be the set of all idempotent functions in $\Pol(R)$. \end{definition}

\begin{definition}[Notation for specific clones]
We introduce the notation for the clones and relations used throughout the following sections.
\begin{itemize}
\item ($\relation{R}_n$) For $n\geq 2$, let $\relation{R}_n$ denote the $n$-ary relation $\relation{R}_n = \{0,1\}^n\setminus \{(1,1,\dots,1)\}.$
\item ($\Bc_n$) For $n\geq 2$, let $\Bc_n =\PolId(\relation{R}_n)$.
\item ($\Bc_{\infty}$) Let $\Bc_{\infty} = \PolId(\relation{R}_{3},\relation{R}_{4},\dots)$.
\item ($\Bc_n^{\leq}$) For $i\geq 2$ let $\Bc_n^{\leq} =\PolId(\relation{R}_n,\leq)$.
\item ($\Bc_{\infty}^{\leq}$) Let $\Bc_{\infty}^{\leq} = \PolId(\relation{R_3}, \relation{R}_4,\dots,\leq)$.
\item ($\Ac$) Let $\Ac$ be the clone generated by the binary function $\wedge$, where $\wedge$ returns the minimum of its two arguments.
\item ($\Cc$) Let $\Cc = \PolId(\neq)$.
\item ($\Cc^{\leq}$) Let  $\Cc^{\leq} = \PolId(\neq,\leq)$.
\item ($\Lc$) Let $\Lc$ denote the clone consisting of all idempotent linear functions, that is, functions of the form $a_1x_1+a_2x_2+\cdots+a_nx_n,$ where an odd number of the coefficients $a_i$ are equal to $1$ and the remaining coefficients are equal to $0$.
\end{itemize}
\end{definition}

\begin{definition}[$\UP$ set]
Let $S\subseteq \{0,1\}^n$. We equip $\{0,1\}^n$ with the componentwise order, that is
$$(x_1,x_2,\dots, x_n) \leq (y_1,y_2,\dots y_n) \quad \Leftrightarrow \quad x_1\leq y_1 \wedge x_2\leq y_2 \wedge \cdots\wedge x_n\leq y_n$$
We define $\UP(S)=\{x\in\{0,1\}^n : \exists y\in S \text{ such that } x\geq y\}$.
\end{definition}

The following characterizations and properties of the clones will be used repeatedly. Let $f$ be a Boolean function.
\begin{itemize}
\item ($\Bc_n$) 
We have $f\in\Bc_n$ if and only if, for every subset $S\subseteq f^{-1}(1)$ with $|S|\leq n$, there exists a coordinate in which all elements of $S$ are equal to $1$.

\item ($\Bc_\infty$) We have $f\in\Bc_\infty$ if and only if there exists a coordinate such that every element of $f^{-1}(1)$ has value $1$ in that coordinate.
\item ($\Bc_n^{\leq}$) if $f\in\Bc_n$, then the function $g$ defined by $g(x) = 1$ iff $x\in \UP(f^{-1}(1))$ belongs to $\Bc_n^{\leq}$.

\item ($B_\infty^{\leq}$) if $f\in\Bc_\infty$, then the function $g$ defined by $g(x) = 1$ iff $x\in \UP(f^{-1}(1))$ belongs to $\Bc_\infty^{\leq}$.

\item ($\Cc$) We have $f\in \Cc$ if and only if $f(\overline{x}) = \overline{f(x)}$ for every $x$.
\end{itemize}
    Furthermore, a {\em minority} is a function of odd arity that returns the value occurring fewer times among its inputs, provided that it occurs at least once. Similarly, a {\em majority} is a function of odd arity that returns the value occurring more frequently among its inputs.
\begin{itemize}
\item ($\Cc$) Both the minority and the majority belong to $\Cc$.
\item ($\Cc^{\leq}$) The clone $\Cc^{\leq}$ consists precisely of those functions in $\Cc$ that are monotone.
\item ($\Cc^{\leq}$) In particular, the majority function belongs to $\Cc^{\leq}$.
\end{itemize}

\subsection{Products modulo trivial equivalence}

Figure~\ref{lattice} shows the poset of all products of Boolean clones modulo trivial equivalence, ordered by the relation $\preceq$. This poset can be viewed as the free meet-completion of the poset shown in Figure~\ref{palanda}. Observe that the latter is a complete lattice and can moreover be covered by four chains. It follows that its free meet-completion, depicted in Figure~\ref{lattice}, is also a complete lattice. In particular, allowing infinite meets (products) does not yield any additional elements. 

The red vertices represent all products that are trivially equivalent to clones appearing in Figure~\ref{palanda}. We first consider products of the form $\C{\Bc_i,\Bc_j^{\leq}}$ with $i<j$. These products form a triangular subposet. Adding the clone $\Ac$ contributes only a single additional element below this triangle, since $\Ac$ is comparable with every other of these elements. 

Next, we incorporate the remaining clones from Figure~\ref{palanda}, namely $\Cc$, $\Cc^{\leq}$, and $\Lc$. These clones give rise to four distinct products  $\C{\Cc}, \C{\Cc^{\leq}}, \C{\Lc}, \C{\Lc,\Cc^{\leq}}.$ Combining each of these products with the triangular subposet described above yields four additional copies of the triangle. This results in the structure depicted in Figure~\ref{lattice}.

\begin{figure}
\centering{
{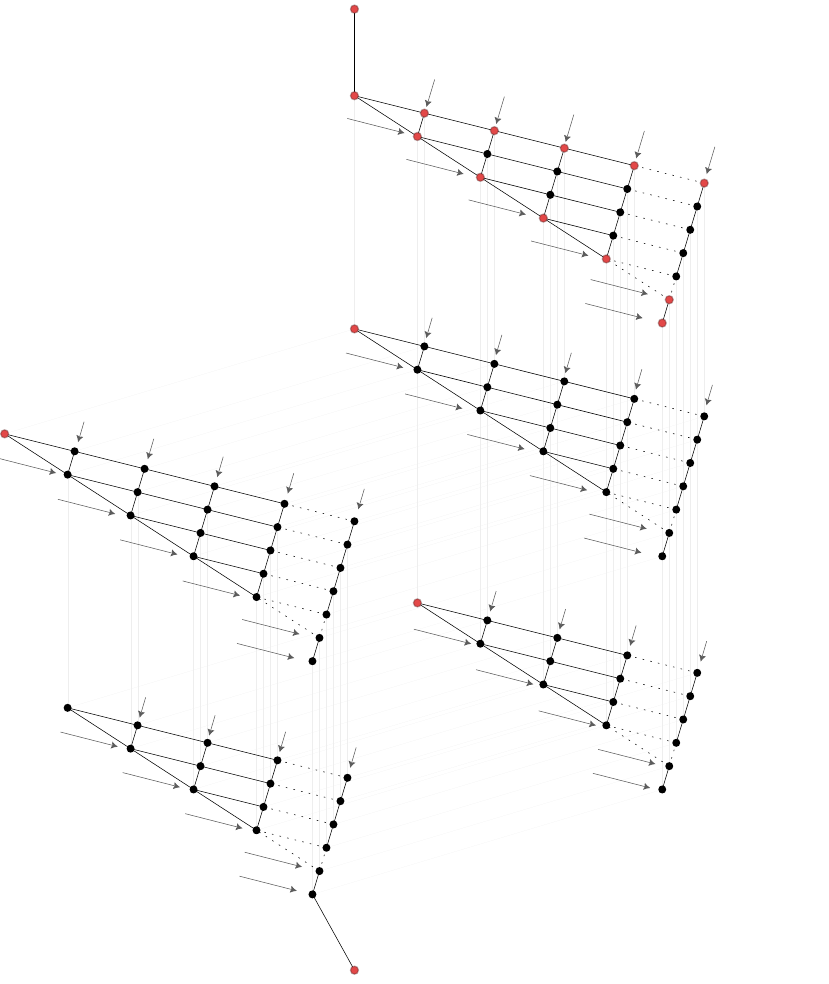}
\caption{Lattice of products of Boolean clones modulo trivial equivalence}
\label{lattice}
}
\end{figure}

\subsection{Orders coincide on specific pairs}

We now work toward proving that $\preceq$ and $\leq$ coincide on all products of Boolean clones. In particular, Figure~\ref{lattice} also represents the minion homomorphism poset restricted to products of Boolean clones.

By Lemma~\ref{ineq-triv}, we already know that $\clone{X} \preceq \clone{Y}$ implies $\clone{X} \leq \clone{Y}$. We now establish the converse for selected pairs $(\clone{X},\clone{Y})$, showing that $\clone{X}\not\preceq\clone{Y}$ implies $\clone{X}\not\leq\clone{Y}$.

All lemmas in this section follow the same proof strategy. For each non-inequality, we exhibit a specific set of height-one identities and then apply Lemma~\ref{identity-proof-strategy} to conclude the desired non-inequality. Recall that, by definition, a product of clones contains a function satisfying a given collection of identities if and only if each of its components contains a function satisfying those identities. 

Each lemma establishes one or more non-inequalities of the form $\clone{X}\not\leq \clone{Y}$ where $\clone{X}$ and $\clone{Y}$ are products of clones. To the right of each lemma, we include a miniature version of Figure~\ref{palanda}: the clones forming the product $\clone{X}$ are highlighted in green, while the clone $\clone{Y}$ is highlighted in red. 

We use the following notation throughout this section. Instead of writing $f(x_1,x_2,\dots,x_n),$ we omit the commas and write $f(x_1x_2\dots x_n).$ Furthermore, when it is convenient to visually separate parts of the input, we insert small spaces. For example, $f(100\ 100)=f(100100)=f(1,0,0,1,0,0).$ 

We also use a matrix notation for finite sets of tuples, in which the tuples are written as rows. For example, $\{011,100\} = \left\{\begin{matrix}011\cr 100\end{matrix}\right\}$

\begin{lemma} 
\label{B2}
  $\C{\Lc}\not\leq \C{\Bc_2}$
\end{lemma}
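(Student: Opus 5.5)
We use the strategy of Lemma~\ref{identity-proof-strategy}: find a set $S$ of height-one identities that $\C{\Lc}$ satisfies and $\C{\Bc_2}$ does not. Since both are single-factor products, this is just a statement about the Boolean clones $\Lc$ and $\Bc_2$. The natural candidate is the minority identities. Let $f$ be a ternary and $g$ a unary function symbol, and take
$$f(xxy)=f(xyx),\qquad f(xyx)=f(yxx),\qquad f(yxx)=g(y).$$
All three identities are height-one: each side has exactly one function symbol applied to variables. The unary symbol $g$ is used to express ``$=y$'' in height-one form.

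For satisfaction in $\Lc$, interpret $f(x_1x_2x_3)=x_1+x_2+x_3$ (three odd coefficients, so it lies in $\Lc$) and $g$ as the identity. Then $f(xxy)=f(xyx)=f(yxx)=y=g(y)$ holds for all $x,y$.

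For non-satisfaction in $\Bc_2$, suppose $f,g\in\Bc_2$ satisfy the identities. The clone is idempotent, so its only unary member is the identity and hence $g(y)=y$; this is the same observation used in Lemma~\ref{projtoproj}. Substituting $x=0$, $y=1$ gives $f(001)=f(010)=f(100)=1$, so $001,010\in f^{-1}(1)$. By the characterization of $\Bc_2$, any two elements of $f^{-1}(1)$ must have a common coordinate equal to $1$. The tuples $001$ and $010$ have no such coordinate, which is a contradiction. Hence $\C{\Bc_2}\not\models S$, and Lemma~\ref{identity-proof-strategy} gives $\C{\Lc}\not\leq\C{\Bc_2}$.

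The only delicate point is forcing $f$ to behave like a genuine minority. The symmetry identities alone do not suffice: in $\Bc_2$ one could take $f(001)=f(010)=f(100)=0$ and still satisfy them. That is why the extra identity with the unary symbol $g$ is needed, and why idempotency of $\Bc_2$, which pins $g$ down to the identity, is essential to the argument.
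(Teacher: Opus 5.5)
Your proof is correct and follows essentially the same approach as the paper. The paper uses the quasi-Malcev identities $f(yyx)=f(xyy)=f(xxx)$, witnessed by the sum modulo $2$ in $\Lc$, and derives $f(001)=f(100)=1$ in $\Bc_2$, contradicting compatibility with $\relation{R}_2$; the only difference is that the paper pins the value to a variable via $f(xxx)$ and idempotency rather than an auxiliary unary symbol. So your remark that a unary symbol is needed is slightly overstated: replacing $g(y)$ by $f(yyy)$ in your identities works equally well.
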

\begin{cleanproof}
\begin{minipage}[t]{0.75\textwidth}\proof We use the identities  known as {\em quasi-Malcev identities}.

{\colorxy 
\begin{align}\label{eq:B2}
f(yyx) =f(xyy) = f(xxx)
\end{align}
}

\begin{itemize}
\item $\Lc \models (\ref{eq:B2})$. Let $f$ be the sum of all variables modulo $2$.
\end{itemize}
\end{minipage}
\hfill
\begin{minipage}[t]{0.25\textwidth}
\vspace{0pt}
\centering
\includegraphics[width=\miniaturewidth]{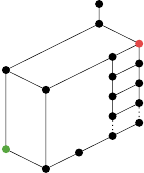}
\end{minipage}
\begin{itemize}
\item $\Bc_2 \not\models (\ref{eq:B2})$. Any function $f$ satisfying (\ref{eq:B2}) must satisfy
\begin{align*}
f(001) &= f(111) = 1\cr
f(100) &= f(111) = 1.
\end{align*}
However, this contradicts the fact that $f$ is compatible with the $\relation{R}_2$ relation. 
\end{itemize}
Lemma~\ref{identity-proof-strategy} therefore implies that $\C{\Lc}\not\leq \C{\Bc_2}$.
\end{cleanproof}

\begin{lemma} 
\label{B2leq}
  $\C{\Lc,\Bc_\infty}\not\leq \C{\Bc_2^{\leq}}$
\end{lemma}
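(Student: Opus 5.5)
The plan is to apply Lemma~\ref{identity-proof-strategy}, as in Lemma~\ref{B2}. I would find a set $S$ of height-one identities in a single $4$-ary symbol $f$ such that both $\Lc$ and $\Bc_\infty$ satisfy $S$ but $\Bc_2^{\leq}$ does not. A product satisfies $S$ exactly when each of its components does, so this gives $\C{\Lc,\Bc_\infty}\models S$ and $\C{\Bc_2^{\leq}}\not\models S$, hence $\C{\Lc,\Bc_\infty}\not\leq\C{\Bc_2^{\leq}}$.

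To find $S$, I would fix two concrete witnesses and take identities they share. For $\Lc$ take $h(x_1x_2x_3x_4)=x_2+x_3+x_4$. For $\Bc_\infty$ take $g(x_1x_2x_3x_4)=x_1\wedge(x_2+x_3+x_4)$. The function $g$ is idempotent, and every element of $g^{-1}(1)$ has a $1$ in the first coordinate, so $g\in\Bc_\infty$. Neither $g$ nor $h$ is monotone, and $\Bc_2^{\leq}$ must be defeated through monotonicity combined with $\relation{R}_2$-compatibility: monotonicity pushes values $1$ upward, and the upward-closed $1$-set then has to be shown to contain two disjoint tuples. The common identities I have checked are
\begin{align*}
f(xxyy)=f(xyyx)=f(xyxy)=f(xxxx),\qquad f(yxxy)=f(yyyy),\qquad f(xxxy)=f(xyyy)=f(xxyx)=f(xyxx).
\end{align*}
For each one I verify that both sides reduce to the same term under $h$ and under $g$. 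For example, $g(xxxy)=g(xyyy)=x\wedge y$ and $h(xxxy)=h(xyyy)=y$.

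Next I derive consequences in $\Bc_2^{\leq}$. Setting $x=1,y=0$ in the first group gives $f(1100)=f(1001)=f(1010)=1$. Setting $x=1,y=0$ in the last group gives $f(1110)=f(1000)=f(1101)=f(1011)$. Monotonicity together with $f(1100)=1$ forces $f(1110)=1$, and therefore $f(1000)=1$. The same substitution with $x=0,y=1$ gives $f(0001)=f(0111)=f(0010)=f(0100)$. The goal is to force one of these to equal $1$. That would put $1000$ and a tuple with first coordinate $0$, such as $0111$ or $0001$, into $f^{-1}(1)$, which are disjoint and contradict compatibility with $\relation{R}_2$.

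I expect the main obstacle to be this last step. Under $g$, every tuple with first coordinate $0$ takes value $0$, so no identity shared with $g$ can produce a second $1$-tuple avoiding coordinate $1$. The contradiction must therefore come from monotonicity clashing with a forced value $0$, for instance an identity forcing $f(1000)=0$. Since $g(1000)=h(1000)=0$, such an identity is at least plausible.

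If the $4$-ary witnesses do not supply one, I would make the search systematic. I would take $S$ to be \emph{all} height-one identities, in any number of variables, satisfied by the pair $(h,g)$. Then $\Bc_2^{\leq}\models S$ exactly when some $4$-ary monotone idempotent $\relation{R}_2$-preserving $f$ agrees with every minor relation of the pair. That is a finite check: on each orbit of tuples under these identifications, $f$ must be constant, so the check reduces to a finite colouring problem on $\{0,1\}^4$. If it still succeeds, I would raise the arity, for example $g=x_1\wedge(x_2+\dots+x_{2k})$ with $h$ the corresponding sum of $x_2,\dots,x_{2k}$. More identifications become available and push $\Bc_2^{\leq}$ into a clash between monotonicity and $\relation{R}_2$.
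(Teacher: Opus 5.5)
Your proposal stops short of a proof, and with the witnesses you chose it cannot be completed. Every identity you list has the same first letter on all of its sides, so the projection $\pi_1^4\in\Bc_2^{\leq}$ satisfies all of them. This is why your derivation stalls after $f(1000)=1$.

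Worse, $\pi_1^4$ satisfies \emph{every} height-one identity in the single symbol $f$ that holds for the pair $(h,g)$, so your exhaustive fallback search must fail. For $\sigma\colon[4]\to[m]$ write $h\circ\sigma$ for the minor $h\circ(\pi^m_{\sigma(1)},\dots,\pi^m_{\sigma(4)})$. Then $L=h\circ\sigma$ is a sum of an odd number of distinct variables, and over $\mathbb{Z}_2$ we have $g\circ\sigma=x_{\sigma(1)}\cdot L$. From $L$ and the algebraic normal form of $x_{\sigma(1)}L$ you can read off $\sigma(1)$:
\begin{itemize}
\item if $x_{\sigma(1)}$ occurs in $L$, it is the unique linear monomial;
\item if $x_{\sigma(1)}$ does not occur and $L$ has at least three summands, it is the variable shared by all the quadratic monomials;
\item if $x_{\sigma(1)}$ does not occur and $L=x_a$, it is the variable other than $x_a$ in the single monomial.
\end{itemize}
Hence $(h,g)\circ\sigma=(h,g)\circ\tau$ forces $\sigma(1)=\tau(1)$, i.e.\ $\pi_1\circ\sigma=\pi_1\circ\tau$. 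The same case analysis applies to $x_1\wedge(x_2+\dots+x_{2k})$ paired with $x_2+\dots+x_{2k}$, so raising the arity does not help. In particular, no identity shared by $h$ and $g$ can force $f(1000)=0$.

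Your instinct that the contradiction should come from monotonicity clashing with a forced value $0$ is right. What is missing is a pair of witnesses that does not remember a coordinate in this way. The paper uses the sum $\ell$ of all five variables and $b\in\Bc_\infty$ with $b^{-1}(1)=\{11111,01000\}$, together with the identities $f(y\,xy\,xy)=f(y\,yx\,yx)$ and $f(y\,xy\,yy)=f(x\,xx\,xx)$. Under $\ell$ these read $y=y$ and $x=x$; under $b$ they read $x\wedge y=x\wedge y$ and $x=x$. Now take an idempotent monotone $f$:
\begin{itemize}
\item the second identity gives $f(01000)=1$ and $f(10111)=0$;
\item monotonicity then gives $f(01010)=1$ and $f(00101)=0$;
\item the first identity at $x=1$, $y=0$ equates these two values, a contradiction.
\end{itemize}
So $\relation{R}_2$ is not even needed, and $b$ escapes only because $b(01000)=1>b(01010)=0$. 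The same idea also works in your arity $4$: take $x_1+x_2+x_3$, $b^{-1}(1)=\{1111,0100\}$, and the identities $f(yxyx)=f(yyxx)$, $f(yxyy)=f(xxxx)$.
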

\begin{cleanproof}
\begin{minipage}[t]{0.75\textwidth}\proof We use the identities
{\colorxy
\begin{align}\label{eq:B2leq}
f(y\ xy\ xy) &= f(y\ yx\ yx)\cr
f(y\ xy\ yy) &= f(x\ xx\ xx)
\end{align}
}
\begin{itemize}
\item $\C{\Lc} \models (\ref{eq:B2leq})$. Let $f$ be the sum of all variables modulo $2$.
\item $\C{\Bc_\infty} \models (\ref{eq:B2leq})$. Define $f$ by
$$f^{-1}(1) = \{1\ 11\ 11, 0\ 10\ 00\}$$
\item $\Bc^{\leq}_2 \not\models (\ref{eq:B2leq})$. By the second identity,
$$f(0\ 10\ 00) = f(1\ 11\ 11) = 1$$
Since $f$ is monotone, it follows that
$$f(0\ 10\ 10) = 1.$$

Similarly,
$$f(1\ 01\ 11) = f(0\ 00\ 00) = 0$$
and monotonicity yields
$$f(0\ 01\ 01) = 0$$

These results together contradict the first identity.
\end{itemize}

\end{minipage}
\begin{minipage}[t]{0.25\textwidth}
\vspace{0pt}
\centering
\includegraphics[width=\miniaturewidth]{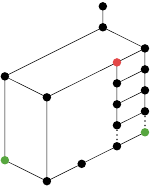}
\end{minipage}
Lemma~\ref{identity-proof-strategy} therefore implies that $\C{\Lc,\Bc_\infty}\not\leq \C{\Bc_2^{\leq}}$.
\end{cleanproof}

\begin{lemma} 
\label{Bi}
  If $i\geq 3$, then $\C{\Lc,\Cc^{\leq},\Bc^{\leq}_{i-1}}\not\leq \C{\Bc_{i}}$
\end{lemma}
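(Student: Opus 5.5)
We apply Lemma~\ref{identity-proof-strategy} with a single symbol $f$ of odd arity $n=2i-1$ (so $n\ge 5$ because $i\ge 3$). The identities are
\[
f(x\cdots x\,y\,x\cdots x\,y\,x\cdots x)=f(x x\cdots x)\qquad\text{for all }1\le j<k\le n,
\]
where on the left the variable $y$ sits exactly in positions $j$ and $k$. As noted before Lemma~\ref{B2}, it suffices to interpret $f$ separately in each factor $\Lc$, $\Cc^{\leq}$, $\Bc^{\leq}_{i-1}$, and then to show that no $f\in\Bc_i$ satisfies the identities.

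\emph{Satisfiability in the factors.}
\begin{itemize}
\item In $\Lc$, take $f$ to be the sum of all $n$ variables. Since $n$ is odd, $f$ is idempotent and linear. The left-hand side evaluates to $2y+(n-2)x=x$, because $n-2$ is odd.
\item In $\Cc^{\leq}$, take the $n$-ary majority. On the left, $x$ occurs $n-2\ge 3>2$ times, so the value is $x$.
\item In $\Bc^{\leq}_{i-1}$, take $f$ with $f^{-1}(1)=\{a\in\{0,1\}^n : a\text{ has at most two zeros}\}$. This $f$ is monotone and idempotent.
\begin{itemize}
\item If $x=1,y=0$, the left-hand tuple has exactly two zeros, so the value is $1=x$.
\item If $x=0,y=1$, the tuple has only two ones, so the value is $0=x$ (here we use $n\ge 5$).
\item For compatibility with $\relation{R}_{i-1}$: any $i-1$ tuples of $f^{-1}(1)$ have together at most $2(i-1)=n-1<n$ zero positions. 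Hence some coordinate equals $1$ in all of them, which is the characterization of $\Bc_{i-1}$.
\end{itemize}
\end{itemize}
Instead of this direct check, one could also invoke the stated $\UP$ property: $f^{-1}(1)$ is already an up-set of a $\Bc_{i-1}$-function.

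\emph{Failure in $\Bc_i$.} Suppose $f\in\Bc_i$ satisfies the identities. Substituting $x=1$, $y=0$ and using idempotency, $f$ equals $1$ on every tuple with exactly two zeros. Choose the $i$ tuples whose zero sets are
\[
\{1,2\},\ \{3,4\},\ \dots,\ \{2i-3,2i-2\},\ \{2i-1,1\}.
\]
These zero sets cover all $n=2i-1$ coordinates. So these $i$ elements of $f^{-1}(1)$ have no coordinate in which all of them equal $1$, contradicting compatibility with $\relation{R}_i$.

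\emph{Conclusion and main obstacle.} By Lemma~\ref{identity-proof-strategy}, $\C{\Lc,\Cc^{\leq},\Bc^{\leq}_{i-1}}\not\leq\C{\Bc_i}$. The step needing the most care is the choice of parameters. The threshold function works for $\Bc_{i-1}$ only if $2(i-1)<n$, while the covering argument refuting $\Bc_i$ needs $2i\ge n$. Oddness of $n$ is forced by $\Lc$, since the two copies of $y$ must cancel and the sum must be idempotent. All three requirements are met exactly by $n=2i-1$, and $n\ge5$ (from $i\ge3$) is what makes the majority and the threshold function return $x$.
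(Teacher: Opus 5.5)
Your proof is correct, but it uses a different family of identities than the paper. The paper splits on the parity of $i$. For even $i$ it uses arity $i+1$: the first coordinate is always $y$ and a second $y$ cycles through the remaining block. For odd $i$, the arity $i+1$ would be even and the full sum would not be idempotent, so it switches to arity $2i+1$ with two complementary blocks. In both cases it imposes only a cyclic chain of $i$ identities. Some of its witnesses are non-symmetric: the majority on the last $i$ variables in the odd case, and in both cases the up-set of the $i$ specific tuples for $\Bc^{\leq}_{i-1}$. You instead fix the single arity $n=2i-1$ and impose all $\binom{n}{2}$ two-$y$ patterns. This removes the case distinction and makes every witness symmetric. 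Your $\Bc^{\leq}_{i-1}$ witness is exactly $\atleast{n-2}$, and your constraints $2(i-1)<n\le 2i$ are exactly the conditions $n-2>\beta(n,i-1)$ and $n-2\le\beta(n,i)$ from the cores section. The paper's less symmetric choice buys reusability. A close variant of its chain is used in Lemma~\ref{Bileq}, where the source product also contains $\Bc_\infty$. A fully symmetric family like yours cannot work there: any idempotent solution sends every tuple with two zeros to $1$, so no coordinate is $1$ on all of $f^{-1}(1)$, and the solution lies outside $\Bc_\infty$.
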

\begin{cleanproof}
\begin{minipage}[t]{0.75\textwidth}\proof

We distinguish the cases according to the parity of $i$.

\begin{itemize}
\item Suppose first that $i$ is odd. We use the identities
{\colorxy
\begin{align}\label{eq:Biodd}
f&(y\ {\overbrace{yy\cdots yyx}^{i}}\ {\overbrace{xx\cdots xxy}^{i}})=\cr
=f&(y\ yy\cdots yxy\ xx\cdots xyx)=\cr
&\kern0.2cm\vdots\kern0.5cm\adots\kern1.2cm\adots\cr
=f&(y\ xy\cdots yyy\ yx\cdots xxx)=\cr
=f&(x\ xx\cdots xxx\ xx\cdots xxx)
\end{align}
}
\end{itemize}
\end{minipage}
\begin{minipage}[t]{0.25\textwidth}
\vspace{0pt}
\centering
\includegraphics[width=\miniaturewidth]{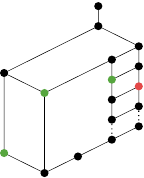}
\end{minipage}
\begin{itemize}
  \item[]
\begin{itemize}
\item $\C{\Lc}\models (\ref{eq:Biodd})$. Let $f$ be the sum of all variables modulo $2$.
\item $\C{\Cc^{\leq}}\models (\ref{eq:Biodd})$. Let $f$ be the majority function on the last $i$ variables.
\item $\C{\Bc_{i-1}^{\leq}}\models (\ref{eq:Biodd})$. Define $f$ by
$$f^{-1}(1) =
\UP\smatrix{0\ {\overbrace{00\cdots 001}^{i}}\ {\overbrace{11\cdots 110}^{i}}\cr
0\ 00\cdots 010\ 11\cdots 101\cr
0\ 00\cdots 100\ 11\cdots 011\cr
\kern-0.1cm\vdots\kern0.5cm\adots\kern1.1cm\adots\kern0.4cm\cr
0\ 10\cdots 000\ 01\cdots 111}$$
\item $\C{\Bc_{i}}\not\models (\ref{eq:Biodd})$. Any function $f$ satisfying (\ref{eq:Biodd}) must satisfy

\begin{align*}
f(1\ {\overbrace{11\cdots 111}^{i}}\ {\overbrace{11\cdots 111}^{i}}) = f(0\ {\overbrace{00\cdots 001}^{i}}\ {\overbrace{11\cdots 110}^{i}}) &= 1 =\cr
 f(0\ 00\cdots 010\ 11\cdots 101) &= 1 =\cr
 \vdots\kern0.5cm\adots\kern1.1cm\adots\kern0.7cm&\cr
 f(0\ 10\cdots 000\ 01\cdots 111) &= 1 =\cr            
\end{align*}
However, this contradicts the fact that $f$ is compatible with the $\relation{R}_i$ relation. 
\end{itemize}

\item Suppose now that $i$ is even. We use the identities
{\colorxy
\begin{align}\label{eq:Bieven}
f&(y\ {\overbrace{xx\cdots xxy}^{i}})=\cr
=f&(y\ xx\cdots xyx)=\cr
&\kern0.2cm\vdots\kern0.55cm\adots\cr
=f&(y\ yx\cdots xxx)=f(x\ xx\cdots xxx)\cr
\end{align}
}

\begin{itemize}
\item $\C{\Lc}\models (\ref{eq:Bieven})$.Let $f$ be the sum of all variables modulo $2$.
\item $\C{\Cc^{\leq}}\models (\ref{eq:Bieven})$.Let $f$ be the majority function on all variables.
\item $\C{\Bc_{i-1}^{\leq}}\models (\ref{eq:Bieven})$. Define $f$ by
$$f^{-1}(1) =
\UP\smatrix{0\ {\overbrace{11\cdots 110}^{i}}\cr
0\ 11\cdots 101\cr
0\ 11\cdots 011\cr
\kern-0.1cm\vdots\kern0.5cm\adots\kern0.4cm\cr
0\ 01\cdots 111}$$

\item $\C{\Bc_{i}}\not\models (\ref{eq:Bieven})$. Any function $f$ satisfying (\ref{eq:Bieven}) must satisfy
\begin{align*}
f(1\ {\overbrace{11\cdots 111}^{i}}) = f(0\ {\overbrace{11\cdots 110}^{i}}) &= 1 =\cr
 f(0\ 11\cdots 101) &= 1 =\cr
 \vdots\kern0.55cm\adots\kern0.7cm&\cr
 f(0\ 01\cdots 111) &= 1 =\cr            
\end{align*}
However, this contradicts the fact that $f$ is compatible with the $\relation{R}_i$ relation. 
\end{itemize}
\end{itemize}
Lemma~\ref{identity-proof-strategy} therefore implies that $\C{\Lc,\Cc^{\leq},\Bc^{\leq}_{i-1}}\not\leq \C{\Bc_{i}}$.
\end{cleanproof}

\begin{lemma} 
\label{Bileq}
  For every $i\geq 3$ it holds that $\C{\Lc,\Cc^{\leq}, \Bc^{\leq}_{i-1}, \Bc_{\infty}}\not\leq \C{\Bc^{\leq}_{i}}$
\end{lemma}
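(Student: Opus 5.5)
We follow the strategy of the previous lemmas: exhibit a finite set $\Sigma$ of height-one identities that is satisfied in $\Lc$, $\Cc^{\leq}$, $\Bc^{\leq}_{i-1}$ and $\Bc_\infty$ but not in $\Bc^{\leq}_i$, and then apply Lemma~\ref{identity-proof-strategy}. Since $\Bc^{\leq}_i\subseteq\Bc_i$, the identities (\ref{eq:Biodd}) resp.\ (\ref{eq:Bieven}) from Lemma~\ref{Bi} already fail in $\Bc^{\leq}_i$. The new requirement is satisfiability in $\Bc_\infty$, and these identities do not provide it. For $\Bc_\infty$, the tuples forced to value $1$ (the all-ones tuple and the $i$ ``cyclic'' tuples starting with $0$) have no common coordinate equal to $1$. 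The idea is to fix this as in Lemma~\ref{B2leq}: we give $\Bc_\infty$ a common witness coordinate, and use \emph{monotonicity} in $\Bc_i^{\leq}$ to destroy it.

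Concretely, we take the chain of Lemma~\ref{Bi} (treating $i$ odd and $i$ even separately, exactly as there) and add a new block of coordinates. In every term of the chain, including $f(xx\cdots x)$, this block carries $x$ in a distinguished coordinate $c$. We then add a second family of identities, modelled on the second identity of (\ref{eq:B2leq}). Each such identity equates $f(xx\cdots x)$ with a term whose substitution $x=0,y=1$ yields value $0$. Monotonicity of a $\Bc_i^{\leq}$-solution then forces value $0$ on certain tuples. Finally, we add a linking identity, as the first identity of (\ref{eq:B2leq}) does, which forces the $i$ cyclic tuples with $c$ switched to $0$ to take value $1$. In $\Bc^{\leq}_i$ we then obtain $i$ tuples in $f^{-1}(1)$ with no common coordinate equal to $1$ (the cyclic pattern kills all old coordinates, and $c$ is now $0$). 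This contradicts compatibility with $\relation{R}_i$.

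Next we verify the positive sides one by one. For $\Lc$, we keep the number of occurrences of each variable in each identity of the right parity (padding the new block with pairs $yy$ or $xx$ where needed) so that the sum modulo $2$ works. For $\Cc^{\leq}$, we take a majority on a suitable odd sub-block, the same one as in Lemma~\ref{Bi}, arranged so that the new coordinates do not disturb it. For $\Bc^{\leq}_{i-1}$, we take $f^{-1}(1)$ to be the $\UP$ of the tuples listed in Lemma~\ref{Bi}, extended on the new block; any $i-1$ of the cyclic tuples share a coordinate $1$, which gives membership in $\Bc_{i-1}$ and hence, after taking $\UP$, in $\Bc_{i-1}^{\leq}$. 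For $\Bc_\infty$, we take $f^{-1}(1)$ to be exactly the finite set of tuples forced to be $1$, without upward closure; all of them have $1$ at $c$, so $f\in\Bc_\infty$. We must also check that the tuples forced to be $0$ are not in this set.

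The main obstacle is the exact design of the extra block and of the linking identity. The linking identity must be strong enough that monotonicity in $\Bc_i^{\leq}$ pushes the cyclic tuples to value $1$ with coordinate $c$ equal to $0$. At the same time, it must be weak enough that the non-monotone $\Bc_\infty$-witness, the $\Bc_{i-1}^{\leq}$-witness, and the parity and majority witnesses remain consistent with it. I would first try the direct analogue of (\ref{eq:B2leq}), with one extra coordinate pair per cyclic term, and check the four witnesses case by case.
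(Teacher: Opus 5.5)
\textbf{There is a genuine gap.} You correctly see why the identities of Lemma~\ref{Bi} do not suffice: every idempotent solution sends $1\cdots1$ and the $i$ cyclic tuples to $1$, so $\Bc_\infty$ fails. But the proposal stops where the proof starts. No identities are written down, and the mechanism you sketch for them cannot work.

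For $f\in\Bc_i^{\leq}$, the conditions $f(1\cdots1)=1$, $f(0\cdots0)=0$, $f(a)\le f(b)$ for $a\le b$, compatibility with $\relation{R}_i$, and the identities evaluated at $x,y\in\{0,1\}$ are all Horn clauses in the unknowns $f(a)$. So the value $1$ is forced only by propagating upward from $f(1\cdots1)=1$ along identities and monotonicity, and a value forced to be $0$ never forces any value $1$. Your second family, which forces zeros by downward monotonicity, therefore contributes nothing toward putting the cyclic tuples with $c=0$ into $f^{-1}(1)$.

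Worse, the direct analogue of (\ref{eq:B2leq}) that you plan to try first gets its contradiction from idempotency and monotonicity alone. Any such system fails in every monotone idempotent clone, including $\Cc^{\leq}$ and $\Bc^{\leq}_{i-1}$, which must satisfy it. The only admissible contradiction is an $\relation{R}_i$-violation among tuples reached upward from $1\cdots1$. That path must contain an upward monotone step; otherwise the non-monotone $\Bc_\infty$-witness is forced to contain the same tuples. For the same reason, your $\Bc^{\leq}_{i-1}$-witness cannot be the $\UP$ of tuples that all have $1$ at $c$, since such a function lies in $\Bc^{\leq}_\infty\subseteq\Bc^{\leq}_i$.

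\textbf{How the paper supplies the missing step.} The paper does not equate the cyclic chain with $f(x\cdots x)$. Each chain term gets a third block copying its second block. One extra identity equates $f(x\cdots x)$ with the first chain term whose third block is replaced by $yy\cdots y$. At $x=1,y=0$ this tuple lies below the first chain tuple, so monotonicity pushes $1$ into the whole chain, whose $i$ tuples have no common $1$. Meanwhile the non-monotone set $f^{-1}(1)=\{1\cdots1,\ (0\cdots01\ 1\cdots10\ 0\cdots0)\}$ (odd case; the even case is analogous) is a $\Bc_\infty$-witness that keeps the chain at $0$.

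\textbf{A repair within your framework.} Let $T_k$ be the chain terms of Lemma~\ref{Bi} and write $t_k$ for $T_k$ evaluated at $x=1,y=0$.
\begin{itemize}
\item Append two coordinates carrying $(x,y)$, so the chain reads $f(T_k\,xy)=f(x\cdots x)$, and add $f(T_k\,yy)=f(T_k\,xx)$ for every $k$.
\item In $\Bc_i^{\leq}$ this gives $f(t_k\,11)\ge f(t_k\,10)=1$, hence $f(t_k\,00)=1$ for all $k$, contradicting $\relation{R}_i$.
\item The set $\{1\cdots1\}\cup\{t_k\,10\}$ is a valid $\Bc_\infty$-witness.
\item The witnesses of Lemma~\ref{Bi}, ignoring the two new coordinates, handle $\Lc$, $\Cc^{\leq}$ and $\Bc^{\leq}_{i-1}$.
\end{itemize}
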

\begin{cleanproof}
\begin{minipage}[t]{0.75\textwidth}\proof

We distinguish the cases according to the parity of $i$.
    
\begin{itemize}
\item Suppose first that $i$ is odd. We use the identities
{\colorxy
\begin{align*}
f&({\overbrace{yy\cdots yyx}^{i}}\ {\overbrace{xx\cdots xxy}^{i}}\ {\overbrace{xx\cdots xxy}^{i}}) =\cr
=f&(yy\cdots yxy\ xx\cdots xyx\ xx\cdots xyx) =\cr
&\kern0.6cm\adots\kern1.1cm\adots\kern1.1cm\adots\cr
=f&(xy\cdots yyy\ yx\cdots xxx\ yx\cdots xxx) 
\end{align*}
}
\end{itemize}
\end{minipage}
\begin{minipage}[t]{0.25\textwidth}
\vspace{0pt}
\centering
\includegraphics[width=\miniaturewidth]{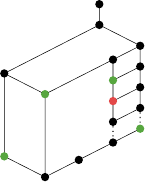}
\end{minipage}

\begin{itemize}
\item[]
{\colorxy
\begin{align}\label{eq:Bileqodd}
f&({\overbounded{yy\cdots yyx}^{i}}\ {\overbounded{xx\cdots xxy}^{i}}\ {\overbounded{yy\cdots yyy}^{i}}) =\cr
=f&({xx\cdots xxx}\ {xx\cdots xxx}\ {xx\cdots xxx})
\end{align}
}
\begin{itemize}
\item $\C{\Lc}\models (\ref{eq:Bileqodd})$. Let $f$ be the sum of all variables modulo $2$.
\item $\C{\Cc^{\leq}}\models (\ref{eq:Bileqodd})$. Let $f$ be the majority function on the middle $i$ variables.
\item $\C{\Bc^{\leq}_{i-1}}\models (\ref{eq:Bileqodd})$. Define $f$ by
$$f^{-1}(1) =
\UP\smatrix{{\overbrace{00\cdots 001}^{i}}\ {\overbrace{11\cdots 110}^{i}}\ {\overbrace{00\cdots 000}^{i}}\cr
00\cdots 010\ 11\cdots 101\ 11\cdots 101\cr
00\cdots 100\ 11\cdots 011\ 11\cdots 011\cr
\kern0cm\adots\kern1cm\adots\kern1cm\adots\cr
10\cdots 000\ 01\cdots 111\ 01\cdots 111}$$
\item $\C{B_\infty}\models (\ref{eq:Bileqodd})$. Define $f$ by
$$f^{-1}(1) = \smatrix{{\overbrace{11\cdots 111}^{i}}\ {\overbrace{11\cdots 111}^{i}}\ {\overbrace{11\cdots 111}^{i}}\cr {00\cdots 001}\ {11\cdots 110}\ {00\cdots 000}}$$
\item $\C{B^{\leq}_{i}}\not\models (\ref{eq:Bileqodd})$. Any function $f$ satisfying (\ref{eq:Bileqodd}) must satisfy
\begin{align*}
  1 &= f({\overbrace{11\cdots 111}^{i}}\ {\overbrace{11\cdots 111}^{i}}\ {\overbrace{11\cdots 111}^{i}}) \cr
  &= f({00\cdots 001}\ {11\cdots 110}\ {00\cdots 000})
\end{align*}
Since $f$ is monotone, it follows that
\begin{align*}  
  f({\overbrace{00\cdots 001}^{i}}\ {\overbrace{11\cdots 110}^{i}}\ {\overbrace{00\cdots 000}^{i}}) &= 1\cr
  \Downarrow\hskip2.3cm&\cr
  f({00\cdots 001}\ {11\cdots 110}\ {11\cdots 110}) &= 1
\end{align*}
Applying (\ref{eq:Bileqodd}) yields
\begin{align*}
f&({\overbrace{00\cdots 001}^{i}}\ {\overbrace{11\cdots 110}^{i}}\ {\overbrace{11\cdots 110}^{i}})=1\cr
f&(00\cdots 010\ 11\cdots 101\ 11\cdots 101)=1\cr
&\kern0.5cm\adots\kern1.1cm\adots\kern1.1cm\adots\kern0.75cm\vdots\cr
f&(10\cdots 000\ 01\cdots 111\ 01\cdots 111)=1
\end{align*}
However, this contradicts the fact that $f$ is compatible with the $\relation{R}_i$ relation. 
\end{itemize}

\item Suppose now that $i$ is even. We use the identities
{\colorxy
\begin{align*}
f&(y\ {\overbrace{xx\cdots xxy}^{i}}\ {\overbrace{xx\cdots xxy}^{i}})=\cr
=f&(y\ xx\cdots xyx\ xx\cdots xyx)=\cr
&\kern0.2cm\vdots\kern0.55cm\adots\kern1.2cm\adots\cr
=f&(y\ yx\cdots xxx\ yx\cdots xxx)
\end{align*}
}

{\colorxy
\begin{align}\label{eq:Bileqeven}
f(y\ {\overbrace{xx\cdots xxy}^{i}}\ {\overbrace{yy\cdots yyy}^{i}}) = f(x\ {\overbrace{xx\cdots xxx}^{i}}\ {\overbrace{xx\cdots xxx}^{i}})
\end{align}
}
\begin{itemize}
\item $\C{\Lc}\models (\ref{eq:Bileqeven})$. Let $f$ be the sum of all variables modulo $2$.
\item $\C{\Cc^{\leq}}\models (\ref{eq:Bileqeven})$. Let $f$ be the majority function on the first $i+1$ variables.
\item $\C{\Bc^{\leq}_{i-1}}\models (\ref{eq:Bileqeven})$. Define $f$ by
$$f^{-1}(1) =
\UP\smatrix{0\ {\overbrace{11\cdots 110}^{i}}\ {\overbrace{00\cdots 000}^{i}}\cr
0\ 11\cdots 101\ 11\cdots 101\cr
0\ 11\cdots 011\ 11\cdots 011\cr
\kern-0.1cm\vdots\kern0.5cm\adots\kern1.1cm\adots\kern0.4cm\cr
0\ 01\cdots 111\ 01\cdots 111}$$
\item $\C{\Bc_\infty}\models (\ref{eq:Bileqeven})$. Define $f$ by
$$f^{-1}(1) = \smatrix{1\ {\overbrace{11\cdots 111}^{i}}\ {\overbrace{11\cdots 111}^{i}}\cr {0}\ {11\cdots 110}\ {00\cdots 000}}$$
\item $\C{\Bc^{\leq}_{i}}\not\models (\ref{eq:Bileqeven})$. Any function $f$ satisfying (\ref{eq:Bileqeven}) must satisfy
$$1 = f(1\ {\overbrace{11\cdots 111}^{i}}\ {\overbrace{11\cdots 111}^{i}}) = f(0\ {\overbrace{11\cdots 110}^{i}}\ {\overbrace{00\cdots 000}^{i}})$$
Since $f$ is monotone, it follows that
\begin{align*}
  f(0\ {\overbrace{11\cdots 110}^{i}}\ {\overbrace{00\cdots 000}^{i}})&=1 \cr
  \Downarrow\hskip1.5cm&\cr
  f(0\ {11\cdots 110}\ {11\cdots 110}) &= 1
\end{align*}
Applying (\ref{eq:Bileqeven}) yields
\begin{align*}
f&(0\ {\overbrace{11\cdots 110}^{i}}\ {\overbrace{11\cdots 110}^{i}})=1\cr
f&(0\ 11\cdots 101\ 11\cdots 101)=1\cr
&\vdots\kern0.7cm\adots\kern1.2cm\adots\kern0.5cm\cr
f&(0\ 01\cdots 111\ 01\cdots 111)=1
\end{align*}
However, this again contradicts the fact that $f$ is compatible with the $\relation{R}_i$ relation. 
\end{itemize}
\end{itemize}
Lemma~\ref{identity-proof-strategy} therefore implies that $\C{\Lc,\Cc^{\leq}, \Bc^{\leq}_{i-1}, \Bc_{\infty}}\not\leq \C{\Bc^{\leq}_{i}}$.
\end{cleanproof}

\begin{lemma} 
\label{A}
  $\C{\Lc,\Cc^{\leq},\Bc_{\infty}^{\leq}}\not\leq \C{\Ac}$
\end{lemma}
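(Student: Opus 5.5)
The plan is the same as for the preceding lemmas. We exhibit a finite set $S$ of height-one identities in two variables $x,y$ and one function symbol $f$ such that $\Lc\models S$, $\Cc^{\leq}\models S$ and $\Bc_\infty^{\leq}\models S$, but $\Ac\not\models S$. A product satisfies $S$ exactly when each factor does, so Lemma~\ref{identity-proof-strategy} then gives $\C{\Lc,\Cc^{\leq},\Bc_{\infty}^{\leq}}\not\leq \C{\Ac}$.

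The refutation in $\Ac$ is the easy step, and it tells us what shape $S$ must have. Every $n$-ary member of $\Ac$ has the form $f=\min_{j\in T}x_j$ for a nonempty $T\subseteq[n]$. Hence an identity $f(v)=f(w)$ holds for $\min_T$ if and only if the sets $\{v_j: j\in T\}$ and $\{w_j:j\in T\}$ coincide. The system therefore has to force, for every coordinate $j$, that $j\notin T$, so that $T=\emptyset$, which is a contradiction.

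A system consisting only of identities $f(\text{row})=f(xx\cdots x)$ cannot work. Such an identity forces $T$ to avoid the $y$-positions $P$ of the row, and $\Bc_\infty^{\leq}$ then needs $f(1_P)=0$ and $f(1_{[n]\setminus P})=1$. A witness $\UP(Q)$ with a common coordinate $c$ for $Q$ would require $c$ to lie in no $P$, and then $\min_{\{c\}}$ would satisfy the system in $\Ac$. So, as in Lemmas~\ref{Bi} and~\ref{Bileq}, I would use a chain of identities between mixed rows, closed off by one identity equal to $f(xx\cdots x)$.

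Mimicking the odd case of Lemma~\ref{Bileq}, take three blocks of length $k$ (with $k$ odd and $k\ge3$) and cyclically shifted rows $r_1,\dots,r_k$. Here $r_m$ has $x$ at position $m$ of the first block and $y$ elsewhere in it, and the complementary patterns in the other two blocks. Impose $f(r_1)=\dots=f(r_k)$, together with one extra identity equating $f(r_1')$ with $f(xx\cdots x)$, where $r_1'$ is $r_1$ with its third block replaced by $y$'s. The extra identity forces $T$ to lie in the $x$-positions of $r_1'$, that is, inside the middle block minus one coordinate plus one coordinate of the first block. The chain equalities then transport this constraint cyclically and push every coordinate out of $T$. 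I would tune the block structure, for instance by adding a fourth block or an extra leading coordinate, until this propagation genuinely empties $T$.

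The positive witnesses are checked coordinatewise. For $\Lc$, take the sum of a suitable odd set of coordinates, checking that each row has an even number of $y$'s among them. For $\Cc^{\leq}$, take the majority on the middle block, which is odd, so each row is decided by its majority letter. For $\Bc_\infty^{\leq}$, take $f^{-1}(1)=\UP(Q)$, where $Q$ consists of the all-ones tuple together with the tuples obtained from the rows by setting $x=0$, $y=1$ or $x=1$, $y=0$ in a consistent way, and all of these share a common coordinate.

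The main obstacle is this last step. The $\Bc_\infty^{\leq}$ witness needs a common coordinate, while the upward closure must not force value $1$ on a tuple that the identities force to be $0$. These are exactly the monotonicity arguments that refuted $\Bc_i^{\leq}$ in Lemma~\ref{Bileq}. So the first thing I would try is a growing family of rows: $\Bc_\infty$ permits arbitrarily large arity, and every coordinate of $f^{-1}(1)$ except the designated one can be made free. I would choose the common coordinate among the positions that the $\Ac$-argument eliminates only through the chain, not through the final identity.
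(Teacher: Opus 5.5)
Your framework is fine: Lemma~\ref{identity-proof-strategy}, checking the identities factor by factor, and the observation that $\min_T$ satisfies $f(v)=f(w)$ iff $v$ and $w$ use the same variables on $T$. The gap is that the proposal never produces a working system, and the one concrete system you describe cannot be repaired. Up to relabelling coordinates, it is exactly the odd-case system of Lemma~\ref{Bileq}, and the argument there already refutes it in $\Bc_\infty^{\leq}$, which is contained in $\Bc_k^{\leq}$. Evaluating $f(r_1')=f(xx\cdots x)$ at $x=1$, $y=0$ and using monotonicity gives $f=1$ on $r_1$ at $x=1,y=0$. The chain transports this to every $r_m$, and these $k$ tuples have no coordinate that is $1$ in all of them.

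The underlying reason is that $\Ac\subseteq\Bc_\infty^{\leq}$ as sets: every $\min_T$ is idempotent and monotone, and all of $(\min_T)^{-1}(1)$ is $1$ on $T$. So a contradiction in $\Ac$ that uses only these properties is automatically a contradiction in $\Bc_\infty^{\leq}$. Your $\Ac$-argument forces $T$ into the $x$-positions of a family of rows with empty common intersection, and that is exactly such a contradiction. The same holds for any variant (extra blocks, a leading coordinate, larger $k$) in which the contradiction still comes from pushing $T$ out of every coordinate along the chain. The free common coordinate you hope to keep is precisely what the chain destroys.

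The missing idea is to exploit a property of meets that $\Bc_\infty^{\leq}$ lacks. The set $(\min_T)^{-1}(1)$ is closed under coordinatewise $\wedge$, whereas $z\wedge(x\vee y)$ is $1$ on $011$ and $101$ but not on $001$. The paper's identities are
$p_1(xxy)=p_1(xxx)$, $p_1(xyx)=p_1(xxx)$, $p_2(xyx)=p_2(xxx)$, $p_2(xxy)=p_1(xyy)$, $q(xyy)=p_2(xyy)$, $q(xxy)=q(yyy)$.

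In $\Ac$ they fail as follows:
\begin{itemize}
\item $q(xxy)=y$ forces $q=z$.
\item Writing $p_2=\min_{T_2}$, the identities $p_2(xyy)=y$ and $p_2(xyx)=x$ give $T_2\subseteq\{2,3\}\cap\{1,3\}$, so $p_2=z$ and hence $p_1(xyy)=p_2(xxy)=y$.
\item The same intersection step gives $p_1=z$, contradicting $p_1(xxy)=x$.
\end{itemize}

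In $\Bc_\infty^{\leq}$ the chain is broken by $p_2=z\wedge(x\vee y)$, for which $p_2(xxy)=x\wedge y$; take also $p_1=x\wedge(y\vee z)$ and $q=z$. For $\Lc$ take $p_1=p_2=x$ and $q=x+y+z$. For $\Cc^{\leq}$ take $p_1$ the majority and $p_2=q=z$.

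If you want a single symbol, this system translates directly. Take $f$ of arity $27$ with coordinates $z_{ijk}$, $(i,j,k)\in[3]^3$. Read $p_1,p_2,q$ as the minors obtained by setting $z_{ijk}=u_i$, $z_{ijk}=u_j$, $z_{ijk}=u_k$ respectively. Any clone satisfying the translated identities satisfies the original ones through these minors, so $\Ac$ still fails them. Witnesses in $\Bc_\infty^{\leq}$, $\Lc$ and $\Cc^{\leq}$ are $z_{133}\wedge(z_{213}\vee z_{323})$, $z_{111}+z_{112}+z_{113}$, and the majority of $z_{133},z_{233},z_{333}$.
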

\begin{cleanproof}
\begin{minipage}[t]{0.75\textwidth}\proof We use the identities

{\colorxy
\begin{align}\label{eq:A}
p_1(xxy) &= p_1(xxx)\cr
p_2(xxy) &= p_1(xyy)\cr
q(xyy) &= p_2(xyy)\cr
q(xxy) &= q(yyy) \cr
p_i(xyx) &= p_i(xxx) \quad {i\in\{1,2\}}\cr
\end{align}
}

\end{minipage}
\begin{minipage}[t]{0.25\textwidth}
\vspace{0pt}
\centering
\includegraphics[width=\miniaturewidth]{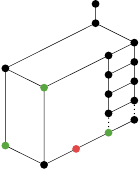}
\end{minipage}
\begin{itemize}
\item $\C{\Lc}\models (\ref{eq:A})$. Let $p_1(xyz)=p_2(xyz)=x$ and let $q(xyz)=x+y+z$. Then
{\colorxy
\begin{align*}
p_1(xxy) &= p_1(xxx) = x\cr
p_2(xxy) &= p_1(xyy) = x\cr
q(xyy) &= p_2(xyy) = x\cr
q(xxy) &= q(yyy) = y\cr
p_i(xyx) &= p_i(xxx) = x.
\end{align*}
}
\item $\C{\Cc^{\leq}}\models (\ref{eq:A})$. Let $p_1(xyz)$ be the majority function and let $p_2(xyz)=q(xyz)=z$. Then
{\colorxy 
\begin{align*}
p_1(xxy) &= p_1(xxx) = x\cr
p_2(xxy) &= p_1(xyy) = y\cr
q(xyy) &= p_2(xyy) = y\cr
q(xxy) &= q(yyy) = y\cr
p_i(xyx) &= p_i(xxx) = x.
\end{align*}
}
\item $\C{\Bc_{\infty}^{\leq}} \models (\ref{eq:A})$. Let $p_1(xyz) = x\wedge (y\vee z)$, $p_2(xyz) = z\wedge (x\vee y)$ and let $q(xyz) = z$. Then 

{\colorxy 
\begin{align*}
p_1(xxy) &= p_1(xxx) = x\cr
p_2(xxy) &= p_1(xyy) = x\wedge y\cr
q(xyy) &= p_2(xyy) = y\cr
q(xxy) &= q(yyy) = y\cr 
p_i(xyx) &= p_i(xxx) = x
\end{align*}
}
\item  $\C{\Ac}\not\models (\ref{eq:A})$. From the identity $\colorxy q(xxy)=q(yyy)=y$ we obtain $q(xyz) = z$. Now consider the identities
{\colorxy
\begin{align*}
y = q(xyy) &= p_2(xyy)\cr
p_2(xyx) &= p_2(xxx) = x,
\end{align*}
}
From ${\colorxy p_2(xyy) = q(xyy)=y}$, it follows that, among the meet terms, $p_2(xyz)$ is either $y$, $z$ or $y\wedge z$. 
Substituting these candidates into ${\colorxy p_2(xyx)=p_2(xxx)=x}$
shows that the only possibility is $p_2(xyz) = z$. 

Next consider
{\colorxy
\begin{align*}
y = p_2(xxy) &= p_1(xyy)\cr
p_1(xyx) &= p_1(xxx) = x,
\end{align*}
}
Similarly from ${\colorxy p_1(xyy) = p_2(xxy)=y}$, it follows that, among the meet terms,
 $p_1(xyz)$ is either $y$, $z$ or $y\wedge z$. Substituting these candidates into ${\colorxy p_1(xyx) = p_1(xxx) = x}$ we get $p_1(xyz)=z$. However, this contradicts the identity $\colorxy p_1(xxy)=p_1(xxx)=x$.
\end{itemize}
Lemma~\ref{identity-proof-strategy} therefore implies that $\C{\Lc,\Cc^{\leq},\Bc_{\infty}^{\leq}}\not\leq \C{\Ac}$.
\end{cleanproof}

\begin{lemma}
\label{C}
  $\C{\Ac}\not\leq\C{\Cc}$
\end{lemma}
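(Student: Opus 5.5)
Following the strategy used for the preceding lemmas, I will find a set of height-one identities satisfied in $\Ac$ but not in $\Cc$, and then apply Lemma~\ref{identity-proof-strategy}.

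The natural choice exploits the fact that $\Cc=\PolId(\neq)$ consists of self-dual functions, $f(\overline{x})=\overline{f(x)}$, while $\wedge$ is not self-dual. I will use the binary identity
{\colorxy
\begin{align}\label{eq:Cplan}
f(xy)=f(yx).
\end{align}
}
First I check that $\Ac\models(\ref{eq:Cplan})$: take $f(xy)=x\wedge y$, which is commutative.

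Next I show $\Cc\not\models(\ref{eq:Cplan})$. Let $f\in\Cc$ be binary and commutative, and put $a=f(01)=f(10)$. Self-duality applied to the input $01$ gives $f(10)=\overline{f(01)}$, so $a=\overline{a}$, which is impossible. Thus no function in $\Cc$ satisfies (\ref{eq:Cplan}).

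Lemma~\ref{identity-proof-strategy} then gives $\Ac\not\leq\Cc$, that is, $\C{\Ac}\not\leq\C{\Cc}$. The only step needing care is choosing the distinguishing identity. Commutativity suffices because it forces $f$ to take equal values on the complementary inputs $01$ and $10$, and self-duality forbids exactly that. I do not expect any real obstacle.
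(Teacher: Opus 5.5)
Your proposal is correct and matches the paper's proof: the same commutativity identity $f(xy)=f(yx)$, the same witness $x\wedge y$ in $\Ac$, and the same contradiction in $\Cc$. The paper phrases the last step as preservation of $\neq$ forcing $f(10)\neq f(01)$, which is equivalent to your self-duality step $f(10)=\overline{f(01)}$.
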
 
\begin{cleanproof}
\begin{minipage}[t]{0.75\textwidth}\proof We use the identity

{\colorxy
  \begin{align}\label{eq:C}
    f(xy)=f(yx)
  \end{align}
}
  
\begin{itemize}
\item $\C{\Ac}\models (\ref{eq:C})$. Let $f(xy) = x\wedge y$. 
\item $\C{\Cc}\not\models (\ref{eq:C})$. Suppose that $f$ satisfies (\ref{eq:C}). Then 
$f(10) = f(01)$ \\ 
On the other hand preservation of $\neq$ implies that $f(10) \neq f(01)$, \\ 
which is a contradiction.
\end{itemize}
\end{minipage}
\begin{minipage}[t]{0.25\textwidth}
\vspace{0pt}
\centering
\includegraphics[width=\miniaturewidth]{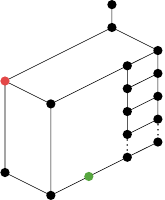}
\end{minipage}
\vskip2pt
\parindent=0pt Lemma~\ref{identity-proof-strategy} therefore implies that $\C{\Ac}\not\leq\C{\Cc}$.
\end{cleanproof}

\begin{lemma}
\label{Cleq}
  $\C{\Lc,\Ac}\not\leq \C{\Cc^{\leq}}$
\end{lemma}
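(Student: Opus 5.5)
The plan follows Lemma~\ref{C}: find a set of height-one identities $\Sigma$ such that $\C{\Lc}\models\Sigma$ and $\C{\Ac}\models\Sigma$ but $\C{\Cc^{\leq}}\not\models\Sigma$, and then apply Lemma~\ref{identity-proof-strategy}. I have not yet found a concrete $\Sigma$ that passes all three checks. What follows is the obstruction that rules out the obvious candidates, and the search I would carry out next.

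\emph{Constraints on $\Sigma$.} Every member of $\Ac$ is a meet of variables, so each identity in $\Sigma$ must have the same set of variables on both sides. The canonical witness in $\Lc$ is the sum of all variables, so each variable should occur with the same parity on both sides. For the failure in $\Cc^{\leq}$ I would use self-duality, $f(\overline{a})=\overline{f(a)}$, together with monotonicity.

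The first idea is a two-variable swap identity $f(v(x,y))=f(v(y,x))$, where the right-hand side is the left-hand side with $x$ and $y$ exchanged. Setting $x=1,y=0$, the right-hand side is the complement of the left-hand side, so self-duality gives $f(v(1,0))=\overline{f(v(1,0))}$, a contradiction in $\Cc^{\leq}$. This holds already in $\Cc$, as in Lemma~\ref{C}. However, such an identity is never satisfied in $\Lc$. For any idempotent linear function with odd support $S$, the identity requires the number of $x$-positions in $S$ to have the same parity as the number of $y$-positions in $S$. These two numbers add up to $|S|$, which is odd, so this is impossible. In particular, identities in only two variables that force $f$ to take complementary values will not work.

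I therefore expect the main obstacle to be designing $\Sigma$ with a third variable $z$, or with several function symbols as in Lemma~\ref{A}. I would start from $f(xyz)=f(yxz)$. This holds in $\Lc$ with $f=x+y+z$ and in $\Ac$ with $f=x\wedge y\wedge z$.

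In $\Cc^{\leq}$ I would analyse this identity as follows. Substituting gives $f(100)=f(010)$, and self-duality gives $f(011)=\overline{f(100)}$. Combined with $f(101)=f(011)$ and monotonicity, this yields $f(100)\le f(101)=\overline{f(100)}$, so $f(100)=0$ and $f(101)=1$. Hence $f(xy0)=x\wedge y$ and $f(xy1)=x\vee y$. This is satisfied by the third projection and by the majority function. The remaining task is to add identities, satisfied by $x+y+z$ and by the meet (or by auxiliary symbols interpreted as projections or sums in $\Lc$ and meets in $\Ac$), that exclude both the third projection and the majority, while keeping the variable-set and parity constraints above.

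The first candidates I would try are small arity-four or arity-five patterns in which $z$ is doubled or paired with $x$ or $y$ in a parity-preserving way. Since majority is totally symmetric in every odd arity, the new identities must break symmetry among the arguments rather than impose more of it.

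If a three-variable pattern fails, I would move to two or three function symbols linked in a chain, in the style of the identities of Lemma~\ref{A}. The chain should propagate a complementary value, as in the swap argument above, through auxiliary functions that are allowed to be projections in $\Lc$.

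Once a candidate $\Sigma$ is fixed, the verification has three parts. Check $\Lc$ and $\Ac$ with the explicit witnesses above. Then, in $\Cc^{\leq}$, evaluate $\Sigma$ at $0/1$ assignments, apply self-duality and monotonicity, and reach a contradiction, as in Lemma~\ref{C}.
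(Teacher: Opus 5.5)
Your proposal has a genuine gap. The whole argument depends on exhibiting a concrete set $\Sigma$, and you never give one. The heuristic guiding your search is also wrong.

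The swap computation shows only that $\Sigma$ cannot equate a tuple with its exact complement. It does not show that two variables and one function symbol are insufficient. You need neither a third variable nor auxiliary symbols. The missing idea is to let monotonicity bridge between a tuple and the complement of another, as follows. Substitute $x=1,y=0$.

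\begin{enumerate}
\item Suppose $\Sigma$ forces $f(u)=f(w)$, where the $0/1$ tuples $u,w$ have disjoint supports. Then $w\le\overline{u}$, so $f(w)\le f(\overline{u})=\overline{f(u)}$, which forces $f(u)=f(w)=0$.
\item Suppose $\Sigma$ forces $f(w)=f(v)$, where the supports of $w$ and $v$ together cover all coordinates. Then $\overline{w}\le v$, which forces $f(w)=f(v)=1$.
\end{enumerate}

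Both mechanisms already appear in your analysis of $f(xyz)=f(yxz)$. There $f(100)=f(010)$ is a disjoint pair and $f(101)=f(011)$ is a covering pair, which is why you obtain the values $0$ and $1$. But the two pairs share no tuple, so nothing contradicts. What you need is a chain $f(u)=f(w)=f(v)$ in which $(u,w)$ is disjoint and $(w,v)$ is covering.

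Under your own constraints, the supports along such a chain are proper, nonempty, and of equal parity. These constraints are that every side contains both variables and every variable occurs with the same parity on each side. In arity $3$ this is impossible: supports of size $1$ cannot cover, and supports of size $2$ cannot be disjoint. Arity $5$ suffices.

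The paper uses
\[ f(xyyyy)=f(yxxxy)=f(xyyxx). \]
Every side contains both variables, with $x$ occurring an odd number of times and $y$ an even number of times. So the sum of all variables works in $\Lc$ (each side equals $x$), and the meet of all variables works in $\Ac$ (each side equals $x\wedge y$). Substituting $x=1,y=0$ gives $f(10000)=f(01110)=f(10011)$. Here $10000$ and $01110$ are disjoint, while $01110$ and $10011$ are covering.

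\begin{enumerate}
\item If $f(10000)=1$, then $f(01110)=1$. Yet $f(01110)\le f(01111)=\overline{f(10000)}=0$, a contradiction.
\item If $f(10000)=0$, then $f(01110)=0$. Self-duality gives $f(10001)=1$, and monotonicity gives $f(10011)\ge 1$. This contradicts $f(10011)=f(10000)=0$.
\end{enumerate}

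A smaller point: in your ternary analysis, $f(xy0)=x\wedge y$ does not follow, since you never determined $f(110)$. The third projection, which you correctly list as a solution, has $f(xy0)=0$.
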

\begin{cleanproof}
\begin{minipage}[t]{0.75\textwidth}\proof We use the identities

 {\colorxy
  \begin{align}\label{eq:Cleq}
    f(xyyyy)=f(yxxxy)=f(xyyxx)
  \end{align}
}

\begin{itemize}
\item $\C{\Lc}\models (\ref{eq:Cleq})$. Let $f$ be the sum of all variables modulo $2$.
\item $\C{\Ac}\models (\ref{eq:Cleq})$. Let $f$ be the meet of all variables.
\item $\C{\Cc^{\leq}}\not\models (\ref{eq:Cleq})$. Suppose that such a function $f$ exists. We distinguish two cases.
\begin{itemize}
\item Suppose that $f(10000)=1$. Since $f$ is compatible with $\neq$ we obtain $f(01111)=0$. Further since $f$ is monotone, we obtain $$f(01110)\leq f(01111)=0,$$ and therefore $$f(01110)=0\neq 1=f(10000).$$ This however contradicts \ref{eq:Cleq}.
\end{itemize}
\end{itemize}
\end{minipage}
\begin{minipage}[t]{0.25\textwidth}
\vspace{0pt}
\centering
\includegraphics[width=\miniaturewidth]{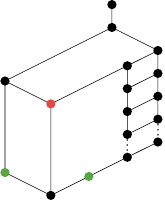}
\end{minipage}
\begin{itemize}
  \item[]
\begin{itemize}
\item Suppose that $f(10000)=0$. By (\ref{eq:Cleq}) we get $f(01110)=0$. Since $f$ preserves $\neq$, it follows that $f(10001)=1$. Using monotonicity, we obtain $$f(10011)\geq f(10001)=1,$$ and hence
$$f(10011)=1\neq 0=f(10000).$$
This however contradicts \ref{eq:Cleq}.
\end{itemize}
\end{itemize}
Lemma~\ref{identity-proof-strategy} therefore implies that $\C{\Lc,\Ac}\not\leq \C{\Cc^{\leq}}$.
\end{cleanproof}

\begin{lemma} 
\label{L}
  $\C{\Cc^{\leq},A}\not\leq \C{\Lc}$
\end{lemma}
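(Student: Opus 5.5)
To show $\C{\Cc^{\leq},\Ac}\not\leq \C{\Lc}$ I will use Lemma~\ref{identity-proof-strategy}, as in the previous lemmas. I need a set of height-one identities that holds in both $\Cc^{\leq}$ and $\Ac$ but fails in $\Lc$. The binary commutativity identity $f(xy)=f(yx)$ from Lemma~\ref{C} is no good here. The only binary idempotent functions in $\Cc^{\leq}$ are projections (a monotone self-dual binary idempotent function must be $x$ or $y$), so $\Cc^{\leq}$ fails it too. I will therefore move to ternary arity and use the \emph{weak near-unanimity} identities
{\colorxy
\begin{align}\label{eq:L}
f(xxy)=f(xyx)=f(yxx).
\end{align}
}

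Satisfaction in the two factors is immediate, since a product satisfies a set of identities iff each component does.
\begin{itemize}
\item $\C{\Cc^{\leq}}\models(\ref{eq:L})$: take $f$ to be the ternary majority, which lies in $\Cc^{\leq}$ by the listed properties. All three terms equal $x$.
\item $\C{\Ac}\models(\ref{eq:L})$: take $f(xyz)=x\wedge y\wedge z$. All three terms equal $x\wedge y$.
\end{itemize}

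The substantive step is $\C{\Lc}\not\models(\ref{eq:L})$. By the definition of $\Lc$, a ternary member is $a_1x_1+a_2x_2+a_3x_3$ with an odd number of coefficients equal to $1$. The only candidates are therefore the three projections and $x_1+x_2+x_3$, and I will check each one.
\begin{itemize}
\item The first projection gives $f(xxy)=x$ but $f(yxx)=y$.
\item The second projection gives $f(xxy)=x$ but $f(xyx)=y$.
\item The third projection gives $f(xxy)=y$ but $f(xyx)=x$.
\item The sum gives $f(xxy)=x+x+y=y$ but $f(xyx)=x$.
\end{itemize}
Each candidate violates (\ref{eq:L}) when $x\neq y$, so no member of $\Lc$ satisfies it. Lemma~\ref{identity-proof-strategy} then yields $\C{\Cc^{\leq},\Ac}\not\leq\C{\Lc}$.

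The main obstacle was choosing the identity: it has to survive both the majority and the meet while ruling out every idempotent linear function. Once the weak near-unanimity identity is chosen, the check is the finite enumeration above.
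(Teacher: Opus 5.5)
\textbf{The proposal has a fatal gap: the identities you chose hold in $\Lc$.} Your fourth bullet contains an arithmetic error. For $f=x_1+x_2+x_3\in\Lc$ one has $f(x,y,x)=x+y+x=y$, not $x$. In fact $f(x,x,y)=f(x,y,x)=f(y,x,x)=y$, so the ternary minority is itself a weak near-unanimity operation and $\C{\Lc}$ satisfies your identities. They therefore do not separate $\C{\Cc^{\leq},\Ac}$ from $\C{\Lc}$, and Lemma~\ref{identity-proof-strategy} cannot be invoked.

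This cannot be repaired by adjusting the enumeration. The two sides of every WNU identity are permutations of each other. For every odd $n$, $\Lc$ contains the totally symmetric idempotent operation $x_1+\dots+x_n$, which satisfies every such permutation-type system of odd arity. On a tuple of $x$'s and $y$'s, the value of this sum depends only on the parity of the number of $y$'s. So a separating system must contain identities whose two sides differ in the number of $y$'s. To also defeat the other members of $\Lc$ (sums over odd subsets of variables), such differences must occur in every coordinate.

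\textbf{How the paper does it.} The paper uses five $5$-ary identities:
\[ f(y,y,x,x,x)=f(x,y,x,x,x),\quad f(x,y,y,x,x)=f(x,x,y,x,x),\quad f(x,x,y,y,x)=f(x,x,x,y,x),\quad f(x,x,x,y,y)=f(x,x,x,x,y),\quad f(y,x,x,x,y)=f(y,x,x,x,x). \]
The two sides of the $i$-th identity differ exactly in coordinate $i$.
\begin{itemize}
\item Every side contains one or two $y$'s. Hence the $5$-ary majority returns $x$ on all sides, and the $5$-ary meet returns $x\wedge y$ on all sides.
\item An idempotent linear $f$ depends on some coordinate $i$. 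Changing that single coordinate changes the value of $f$, which violates the $i$-th identity.
\end{itemize}

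Your observations about binary members of $\Cc^{\leq}$ and about the ternary members of $\Lc$ are correct. The failure lies entirely in the choice of identities.
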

\begin{cleanproof}
\begin{minipage}[t]{0.75\textwidth}\proof We use the identities

{\colorxy
\begin{align}\label{eq:L}
1.\quad f(yyxxx) &= f(xyxxx)\cr
2.\quad f(xyyxx) &= f(xxyxx)\cr
3.\quad f(xxyyx) &= f(xxxyx)\cr
4.\quad f(xxxyy) &= f(xxxxy)\cr
5.\quad f(yxxxy) &= f(yxxxx)
\end{align}
}

\end{minipage}
\begin{minipage}[t]{0.25\textwidth}
\vspace{0pt}
\centering
\includegraphics[width=\miniaturewidth]{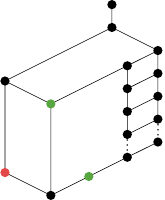}
\end{minipage}
\begin{itemize}
\item $\C{\Cc^{\leq}}\models(\ref{eq:L})$. Let $f$ be the majority function on all five variables.
\item $\C{\Ac}\models(\ref{eq:L})$. Let $f$ be the meet of all five variables.
\item $\C{\Lc}\not\models(\ref{eq:L})$. Suppose, for a contradiction, that $f$ is a linear function satisfying (\ref{eq:L}). Since $f$ is nonconstant, there exists a variable on which $f$ depends. Then the $i$-th identity in (\ref{eq:L}) changes only the $i$-th coordinate while leaving all remaining coordinates unchanged. Evaluating $f$ on the two sides of this identity therefore yields values differing by $1$, contradicting the identity. 
\end{itemize}
Lemma~\ref{identity-proof-strategy} therefore implies that $\C{\Cc^{\leq},A}\not\leq \C{\Lc}$.
\end{cleanproof}

\begin{lemma}
\label{idemp}
  $\Mc\not\leq \Ic,$ where $\Mc$ is the Boolean clone consisting of all functions and $\Ic$ is the clone consisting of all idempotent functions.
\end{lemma}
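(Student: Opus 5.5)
Following the strategy of the preceding lemmas, we apply Lemma~\ref{identity-proof-strategy} with a single height-one identity that involves only one variable.
{\colorxy
\begin{align}\label{eq:idemp}
f(x) = f(y)
\end{align}
}
The identity (\ref{eq:idemp}) says that the unary function $f$ is constant.

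The clone $\Mc$ of all Boolean functions contains the constant unary function $f(x)=0$. This function satisfies (\ref{eq:idemp}), so $\Mc\models(\ref{eq:idemp})$.

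Now suppose $f\in\Ic$ is unary and satisfies (\ref{eq:idemp}). Since $f$ is idempotent, $f(0)=0$ and $f(1)=1$, so $f$ is the identity. Substituting $x=0$ and $y=1$ into (\ref{eq:idemp}) gives $0=f(0)=f(1)=1$, a contradiction. Hence $\Ic\not\models(\ref{eq:idemp})$.

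Lemma~\ref{identity-proof-strategy} therefore yields $\Mc\not\leq\Ic$. No step here is a real obstacle.

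The same conclusion also follows directly from the definition. A minion homomorphism $\zeta\colon\Mc\to\Ic$ must preserve arities, so it sends the constant unary function $c$ to the identity $\tau$, the only unary member of $\Ic$. Since $c\circ(\pi_1^2)=c\circ(\pi_2^2)$, we get $\pi_1^2=\tau\circ(\pi_1^2)=\zeta(c)\circ(\pi_1^2)=\zeta(c)\circ(\pi_2^2)=\pi_2^2$, which is false.
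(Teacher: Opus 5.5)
Your proof is correct and matches the paper's own: it uses the same identity $f(x)=f(y)$, satisfied in $\Mc$ by a constant function and impossible in $\Ic$ because the only idempotent unary function is the identity. Your second, direct argument via $c\circ(\pi_1^2)=c\circ(\pi_2^2)$ is also correct; it is simply Lemma~\ref{identity-proof-strategy} unfolded for this one identity.
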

\begin{cleanproof}
\begin{minipage}[t]{0.75\textwidth}\proof We use the identity
{\colorxy
\begin{align}\label{eq:idem}    
f(x) = f(y)
\end{align}
}
\begin{itemize}
\item $\Mc\models (\ref{eq:idem})$. Any constant function satisfies the identity.
\item $\Ic\not\models (\ref{eq:idem})$. Suppose that $f$ is an idempotent unary function. Then $f(x) = x\neq y = f(y)$. \\ Therefore no such $f$ exists.
\end{itemize}
\end{minipage}
\begin{minipage}[t]{0.25\textwidth}
\vspace{0pt}
\centering
\includegraphics[width=\miniaturewidth]{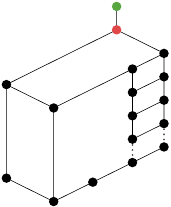}
\end{minipage}
Lemma~\ref{identity-proof-strategy} therefore implies that $\Mc\not\leq \Ic$.
\end{cleanproof}

\begin{lemma} 
\label{projp}
  $\C{\Lc,\Cc^\leq,\Ac}\not\leq \Pc$, where $\Pc$ is the clone consisting of all projections.
\end{lemma}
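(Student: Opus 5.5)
We will follow the same strategy as the previous lemmas. We exhibit one finite set of height-one identities that is satisfied in each of $\Lc$, $\Cc^{\leq}$ and $\Ac$, but not in $\Pc$, and then apply Lemma~\ref{identity-proof-strategy}. Recall that a product satisfies a set of identities exactly when each factor does, and each factor may use a different interpretation of $f$. So it is enough to find one identity set whose instances are all ``balanced'' in a sense common to the sum, the majority and the meet.

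Each side of every identity will be a $5$-tuple over $\{x,y\}$ with exactly three occurrences of $x$ and two occurrences of $y$. This handles all three factors at once:
\begin{itemize}
\item For $\Lc$, take $f$ to be the sum of all five variables modulo $2$; every such side evaluates to $3x+2y=x$.
\item For $\Cc^{\leq}$, take $f$ to be the $5$-ary majority; every such side evaluates to $x$.
\item For $\Ac$, take $f$ to be the meet of all five variables; every side contains both variables, so it evaluates to $x\wedge y$.
\end{itemize}
Hence any identities between such tuples hold in all three clones.

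It remains to defeat projections. For every coordinate $j\in[5]$, some identity must have $x$ in position $j$ on one side and $y$ in position $j$ on the other; then $f=\pi^5_j$ would force $x=y$. Concretely, we propose
{\colorxy
\begin{align*}
f(xxxyy)=f(yyxxx)=f(xyyxx).
\end{align*}
}
The first equality separates coordinates $1,2,4,5$, and the second separates coordinates $2,3,4,5$, so together they cover every coordinate. Since $\Pc$ consists only of projections, the $5$-ary symbol $f$ would have to be some $\pi^5_j$, and then the identity covering $j$ fails. Therefore $\Pc\not\models$ these identities, and Lemma~\ref{identity-proof-strategy} gives $\C{\Lc,\Cc^{\leq},\Ac}\not\leq\Pc$.

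The only delicate point is choosing the tuples so that three conditions hold simultaneously: odd/even parity for the linear sum, a fixed majority value, and both variables present for the meet, while still covering all coordinates. Fixing the pattern ``three $x$'s, two $y$'s'' resolves all three conditions at once, and the remaining verification is routine.
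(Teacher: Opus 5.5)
Your proof is correct and follows the same approach as the paper: you choose height-one identities satisfied by the sum, the majority and the meet of all variables, with every coordinate separated by some equality so that no projection satisfies them. The paper uses four $7$-ary identities, while your $5$-ary chain is tidier because all sides are permutations of $xxxyy$, so any symmetric $5$-ary function satisfies it. One small slip: the literal second equality $f(yyxxx)=f(xyyxx)$ separates coordinates $1$ and $3$, not $2,3,4,5$ (that set comes from the implied equality $f(xxxyy)=f(xyyxx)$); either way, all five coordinates are covered.
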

\begin{cleanproof}
\begin{minipage}[t]{0.75\textwidth}\proof We use the identities
{\colorxy
\begin{align}\label{eq:proj} 
f(yyyxxxx) &= f(xxyxxxx)\cr
f(xxyyyxx) &= f(xxxxyxx)\cr
f(xxxxyyy) &= f(xxxxxxy)\cr
f(xxxxyyy) &= f(xxxxyxx)\cr
\end{align}
}
\end{minipage}
\begin{minipage}[t]{0.25\textwidth}
\vspace{0pt}
\centering
\includegraphics[width=\miniaturewidth]{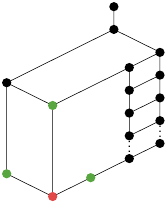}
\end{minipage}

\begin{itemize}
\item $\Lc\models (\ref{eq:proj})$. Let $f$ be the sum of all variables modulo $2$.
\item $\Cc^\leq\models (\ref{eq:proj})$. Let $f$ be the majority function on all variables.
\item $\Ac\models (\ref{eq:proj})$. Let $f$ be the meet of all variables.
\item $\Pc\not\models (\ref{eq:proj})$. We show that no projection can satisfy all identities in (\ref{eq:proj}).
\begin{itemize}
\item  Projection onto the first or second coordinate contradicts the first identity.
\item  Projection onto the third or fourth coordinate contradicts the second identity
\item  Projection onto the fifth or sixth coordinate contradicts the third identity.
\item  Projection onto the seventh coordinate contradicts the fourth identity.
\end{itemize}
\end{itemize}
Lemma~\ref{identity-proof-strategy} therefore implies that $\C{\Lc,\Cc^\leq,\Ac}\not\leq \Pc$.
\end{cleanproof}

\subsection{Orders coincides on all pairs}

We now complete the proof that the orders $\preceq$ and $\leq$ coincide.  Specifically, we show that
$$\clone{X}\not\preceq\clone{Y}\quad\Rightarrow\quad\clone{X}\not\leq\clone{Y}$$
for all pairs $(\clone{X},\clone{Y})$, and not only for those considered above. To this end, we require a few additional definitions and lemmas from lattice theory~\cite{trotter}.

\begin{definition}[critical pair] Let $L$ be a lattice with order relation $\preceq$, and let $x,y\in L$. We say that the ordered pair $(x,y)$ is {\em critical} if it satisfies the following conditions.\footnote{The standard definition additionally requires $x$ and $y$ to be incomparable. We omit this requirement here.}
\begin{itemize}
\item $x$ is minimal among the elements $\{e: e\not\preceq y\}$.
\item $y$ is maximal among the elements $\{e: e\not\succeq x\}$.
\end{itemize}
\end{definition}

\begin{definition}[meet irreducible] Let $L$ be a lattice. An element $x\in L$ is called {\em meet irreducible} if, whenever $x=\bigwedge S$, it follows that $x\in S$.
\end{definition}

\begin{definition}[join irreducible] Let $L$ be a lattice. An element $x\in L$ is called {\em join irreducible} if, whenever $x=\bigvee S$, it follows that $x\in S$.
\end{definition}

\begin{lemma}
\label{meet-irred}
  Let $x\in L$, and let $y$ be a maximal element of the set $\{s:x\not\leq s\}$. Then $y$ is meet irreducible.
\end{lemma}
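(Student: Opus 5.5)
We argue directly from the definitions of maximality and meet irreducibility, for $L$ a lattice (complete, as with Figure~\ref{lattice}, so that arbitrary meets $\bigwedge S$ make sense). Let $y$ be maximal in $\{s : x\not\le s\}$, and suppose $y=\bigwedge S$ for some $S\subseteq L$. The goal is to show $y\in S$, and we argue by contradiction, assuming $y\notin S$.

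Every $s\in S$ satisfies $s\ge \bigwedge S = y$. Since $y\notin S$, each such $s$ satisfies $s>y$ strictly. By maximality of $y$, no element strictly above $y$ lies in the set $\{s : x\not\le s\}$. Hence $x\le s$ for every $s\in S$.

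It follows that $x$ is a lower bound of $S$, so $x\le \bigwedge S=y$. This contradicts the defining property $x\not\le y$ of $y$. Therefore $y\in S$, and $y$ is meet irreducible.

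One degenerate case needs a check. If $S=\emptyset$, then $\bigwedge S$ is the top element $1$. This case causes no trouble: $y=1$ would force $x\le 1=y$, contradicting $x\not\le y$, so this case cannot occur at all.

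No step is a real obstacle; the only point needing care is the strict inequality $s>y$ used to invoke maximality.
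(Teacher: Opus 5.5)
Your proof is correct and complete. The paper gives no proof of this lemma; it quotes it as a standard lattice-theoretic fact (citing Trotter). Your argument is exactly the standard one: if $y=\bigwedge S$ with $y\notin S$, every element of $S$ lies strictly above $y$, hence above $x$ by maximality, so $x\le\bigwedge S=y$, a contradiction.

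Two minor remarks. Your separate check of $S=\emptyset$ is already subsumed by the main argument, since $x$ is vacuously a lower bound of $\emptyset$. Also, completeness of $L$ is not actually needed: you only use that $\bigwedge S$ exists, which the definition of meet irreducibility already presupposes.
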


\begin{lemma} 
\label{critical}
  Suppose that $(x,y)$ is a critical pair in a lattice $L$. Then $x$ is join irreducible and $y$ is meet irreducible.
\end{lemma}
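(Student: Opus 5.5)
The plan is to derive both halves from the definition of a critical pair, using Lemma~\ref{meet-irred} for the meet-irreducibility of $y$ and its order-dual for the join-irreducibility of $x$.

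For $y$: by the definition of a critical pair, $y$ is maximal in $\{e : e\not\succeq x\}=\{s : x\not\preceq s\}$. Lemma~\ref{meet-irred}, applied to this $x$, then gives directly that $y$ is meet irreducible. Should Lemma~\ref{meet-irred} not yet be established, I would prove it as follows. Suppose $y=\bigwedge S$ with $y\notin S$. Each $s\in S$ satisfies $s\succeq y$ and $s\neq y$, so $s$ lies strictly above $y$. Maximality of $y$ then forces $x\preceq s$ for every $s\in S$. Hence $x$ is a lower bound of $S$, so $x\preceq \bigwedge S = y$, which contradicts $x\not\preceq y$. Therefore $y\in S$.

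For $x$: apply the same argument in the dual lattice $L^{\mathrm{op}}$, where meets and joins are exchanged. By the first condition, $x$ is minimal in $\{e : e\not\preceq y\}$; in $L^{\mathrm{op}}$ this says $x$ is maximal in $\{s : y\not\preceq^{\mathrm{op}} s\}$. Lemma~\ref{meet-irred} in $L^{\mathrm{op}}$ then says $x$ is meet irreducible in $L^{\mathrm{op}}$, that is, join irreducible in $L$. Spelled out directly: suppose $x=\bigvee S$ with $x\notin S$. Every $s\in S$ lies strictly below $x$, so by minimality $s\preceq y$. Thus $y$ is an upper bound of $S$, so $x=\bigvee S\preceq y$, a contradiction.

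There is no real obstacle here. The only point needing care is that the lattice in question is complete, so that $\bigwedge S$ and $\bigvee S$ make sense for arbitrary $S$; the paper noted this for Figure~\ref{lattice}. The degenerate case $S=\emptyset$ is handled by the same argument: $\bigwedge\emptyset$ is the top element, and if $y$ were the top then $x\preceq y$, which is excluded; dually, $\bigvee\emptyset$ is the bottom element, and a bottom $x$ would satisfy $x\preceq y$, which is also excluded.
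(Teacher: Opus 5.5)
Your proof is correct and complete. The paper gives no proof to compare against: it states this lemma and Lemma~\ref{meet-irred} without proof, as standard lattice-theoretic facts with a reference to \cite{trotter}.

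Your direct argument is the standard proof, and it also supplies the missing proof of Lemma~\ref{meet-irred}. Suppose $y=\bigwedge S$ with $y\notin S$. Then every element of $S$ lies strictly above $y$, hence above $x$ by maximality, which forces $x\preceq y$. The dual argument handles $x$.

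One small correction to your closing remark: you do not need $L$ to be complete. The hypothesis $y=\bigwedge S$ (resp.\ $x=\bigvee S$) already asserts that this meet (join) exists. Your argument only uses that it is the greatest lower (least upper) bound.

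The case $S=\emptyset$ needs no separate treatment either. There $x$ is vacuously a lower bound of $S$, so the same argument gives $x\preceq \bigwedge\emptyset = y$, a contradiction.
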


Throughout this section, let $L$ denote the lattice of all products of Boolean clones modulo trivial equivalence, ordered by $\preceq$ and shown in Figure~\ref{lattice}.

\begin{lemma}
\label{allineq}
  Let $\clone{Y}\in L$ be meet irreducible. Then the following statements hold.
\begin{itemize}
\item There exists a unique element $\clone{X}$ such that $(\clone{X},\clone{Y})$ is a critical pair. Fix such an element $\clone{X}$.
\item For every element $\clone{Z}\in L$, either $\clone{Z}\preceq \clone{Y}$ or $\clone{X}\preceq Z$.
\item We have $\clone{X}\not\leq \clone{Y}$.
\end{itemize}
\end{lemma}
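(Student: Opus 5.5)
The plan is to work entirely inside the explicit lattice $L$ of Figure~\ref{lattice}. First I will enumerate its meet irreducible elements, and then dispatch each one using one of Lemmas~\ref{B2}--\ref{projp}.

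\textbf{Meet irreducibles of $L$.} Since $L$ is the free meet-completion of the lattice in Figure~\ref{palanda}, every element is a meet of elements of that lattice, i.e.\ of (single) Boolean clones. Hence a meet irreducible element of $L$ must be (trivially equivalent to) a single clone $\C{\clone{Y}}$ with $\clone{Y}$ from Figure~\ref{palanda}. Moreover, $\clone{Y}$ must be meet irreducible already in Figure~\ref{palanda}, because a nontrivial meet there stays a meet in $L$. Reading off Figure~\ref{palanda}, the candidates are $\Bc_i$ and $\Bc_i^{\leq}$ for $i\ge 2$, $\Ac$, $\Cc$, $\Cc^{\leq}$, $\Lc$, the idempotent top $\Ic$ and the projection clone $\Pc$. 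The clones $\Bc_\infty$ and $\Bc_\infty^{\leq}$ are excluded, since they are infinite meets of the $\Bc_i$, respectively $\Bc_i^{\leq}$, and are not irreducible in the complete-lattice sense.

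\textbf{The critical partners.} For each such $\clone{Y}$ I take as $\clone{X}$ exactly the product appearing on the left of the corresponding lemma:
\begin{itemize}
\item $\C{\Lc}$ for $\Bc_2$;
\item $\C{\Lc,\Bc_\infty}$ for $\Bc_2^{\leq}$;
\item $\C{\Lc,\Cc^{\leq},\Bc^{\leq}_{i-1}}$ for $\Bc_i$;
\item $\C{\Lc,\Cc^{\leq},\Bc^{\leq}_{i-1},\Bc_\infty}$ for $\Bc_i^{\leq}$;
\item $\C{\Lc,\Cc^{\leq},\Bc^{\leq}_\infty}$ for $\Ac$;
\item $\C{\Ac}$ for $\Cc$;
\item $\C{\Lc,\Ac}$ for $\Cc^{\leq}$;
\item $\C{\Cc^{\leq},\Ac}$ for $\Lc$;
\item $\Mc$ for $\Ic$;
\item $\C{\Lc,\Cc^{\leq},\Ac}$ for $\Pc$.
\end{itemize}

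\textbf{The three claims.} The second claim is checked as follows. For $\clone{Z}=\C{\clone{Z}_1,\dots,\clone{Z}_k}$ we have $\clone{Z}\not\preceq\clone{Y}$ iff every factor satisfies $\clone{Z}_j\not\le \clone{Y}$ in Figure~\ref{palanda}. So I compute the set $U_{\clone{Y}}$ of Boolean clones not below $\clone{Y}$ and verify that it has the displayed $\clone{X}$ as a ``minimal cover''. Concretely, every clone in $U_{\clone{Y}}$ lies above some factor of $\clone{X}$, and every factor of $\clone{X}$ lies in $U_{\clone{Y}}$. The first condition gives $\clone{X}\preceq\clone{Z}$ whenever $\clone{Z}\not\preceq\clone{Y}$. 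For example, for $\clone{Y}=\Bc_i$, the clones not below $\Bc_i$ are those above $\Lc$, $\Cc^{\leq}$ or $\Bc_{i-1}^{\leq}$. The second condition gives $\clone{X}\not\preceq\clone{Y}$. Together these say that $\clone{X}$ is the minimum of $\{\clone{E}:\clone{E}\not\preceq\clone{Y}\}$. Hence $\clone{X}$ is its unique minimal element, and the second condition for criticality follows by Lemma~\ref{meet-irred}: maximality of $\clone{Y}$ among elements not above $\clone{X}$ comes from meet irreducibility together with the dichotomy, since any $\clone{Y}'\succ\clone{Y}$ satisfies $\clone{Y}'\not\preceq\clone{Y}$, hence $\clone{X}\preceq\clone{Y}'$. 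This gives the first claim, existence and uniqueness. The third claim, $\clone{X}\not\le\clone{Y}$, is exactly the content of Lemmas~\ref{B2}, \ref{B2leq}, \ref{Bi}, \ref{Bileq}, \ref{A}, \ref{C}, \ref{Cleq}, \ref{L}, \ref{idemp}, \ref{projp} respectively.

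\textbf{Main obstacle.} The delicate step is the case-by-case verification of the cover property against Figure~\ref{palanda}, in particular for $\Bc_i^{\leq}$ and $\Ac$. There one must make sure that the chains $\Bc_j$, $\Bc_j^{\leq}$ and the infinite elements $\Bc_\infty$, $\Bc_\infty^{\leq}$ are all correctly covered, for instance that $\Bc_\infty\not\le\Bc_i^{\leq}$ but $\Bc_\infty^{\leq}\le\Bc_i^{\leq}$. I would do this by reading the down-set of each $\clone{Y}$ in Figure~\ref{palanda} and taking the minimal elements of its complement.
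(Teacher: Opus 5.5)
Your proof is correct in outline and takes the same approach as the paper. It contains one false statement, which you should delete.

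\textbf{What matches the paper.} You reduce to single clones and take as $\clone{X}$ the product of the minimal clones not below $\clone{Y}$; your list of partners coincides with the paper's table. You check the dichotomy against Figure~\ref{palanda} and cite Lemmas~\ref{B2}--\ref{projp} for $\clone{X}\not\le\clone{Y}$. Your explicit cover check is slightly more detailed than the paper's ``by this construction''. It shows both $\clone{X}\preceq\clone{Z}$ for all $\clone{Z}\not\preceq\clone{Y}$ and $\clone{X}\not\preceq\clone{Y}$. The paper gets the second fact only implicitly, from the cited lemmas together with $\preceq\Rightarrow\le$.

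\textbf{The false step.} You claim that $\clone{Y}$ must already be meet irreducible in Figure~\ref{palanda}, ``because a nontrivial meet there stays a meet in $L$''. The embedding of a lattice into its free meet-completion does not preserve meets. For example, in Figure~\ref{palanda} the only common lower bounds of $\Cc$ and $\Bc_2^{\leq}$ are $\Cc^{\leq}$ and $\Pc$, so $\Cc\wedge\Bc_2^{\leq}=\Cc^{\leq}$. In $L$, however, $\C{\Cc,\Bc_2^{\leq}}$ lies strictly above $\C{\Cc^{\leq}}$, because neither $\Cc$ nor $\Bc_2^{\leq}$ is below $\Cc^{\leq}$. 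Similarly, Figure~\ref{palanda} gives $\Pc=\Lc\wedge\Ac$ and $\Bc_i^{\leq}=\Bc_{i-1}^{\leq}\wedge\Bc_i$ for $i\ge 3$.

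So $\Cc^{\leq}$, $\Pc$ and $\Bc_i^{\leq}$ ($i\ge3$) are meet reducible in Figure~\ref{palanda} but meet irreducible in $L$. Applying your criterion literally would discard exactly the cases handled by Lemmas~\ref{Bileq}, \ref{Cleq} and \ref{projp}. Your candidate list does include them, so it contradicts the criterion you stated, but it is the correct list. The argument therefore survives once that sentence is removed.

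\textbf{What the enumeration actually needs.} Only three facts, which are what the paper uses:
\begin{itemize}
\item a nontrivial product is the meet of its factors;
\item $\Bc_\infty$ and $\Bc_\infty^{\leq}$ are infinite meets in $L$;
\item the top $\Mc$ is the empty meet.
\end{itemize}
State the last point explicitly. For $\clone{Y}=\Mc$ there is no $\clone{X}$ with $\clone{X}\not\preceq\clone{Y}$, so the lemma would fail if the top counted as meet irreducible.

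\textbf{A minor point.} Citing Lemma~\ref{meet-irred} and meet irreducibility in the maximality step is unnecessary. Your direct argument, $\clone{Y}'\succ\clone{Y}\Rightarrow\clone{Y}'\not\preceq\clone{Y}\Rightarrow\clone{X}\preceq\clone{Y}'$, is complete on its own.
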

\begin{proof} 
No meet irreducible element of $L$ can be represented as a nontrivial product of distinct clones, since such an element would be the meet of its components and therefore would not be meet irreducible.
 Consequently, every meet irreducible element $\clone{Y}\in L$ appears among the clones shown in Figure~\ref{palanda}.
 The only exceptions are
$\Bc_\infty = \bigwedge \Bc_i$, $\Bc_\infty^\leq = \bigwedge \Bc_i^\leq$ which are not meet irreducible. The meet irreducible elements of $L$ are shown in Figure~\ref{palandairred}.

\begin{figure}
\centering{
{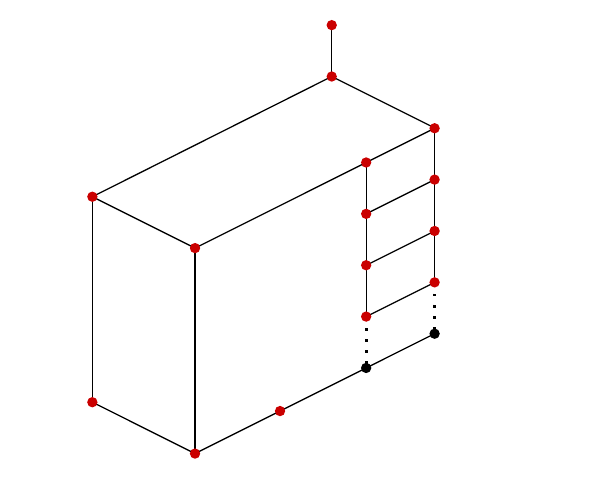}
\caption{Meet irreducible Boolean clones}
\label{palandairred}
}
\end{figure}

For a fixed meet irreducible element $\clone{Y}$, define $\clone{X} = \bigwedge \{\clone{W}:\clone{W}\in L,\clone{W}\not\preceq \clone{Y}\}$. 
Since $L$ is closed under arbitrary meets, the element $\clone{X}$ belongs to $L$. Moreover, every element of $L$ is a product of elements from Figure~\ref{palandairred}. Hence $\clone{X}$ can be expressed as the product of all minimal elements that are not below $\clone{Y}$.

By this construction, every element $\clone{Z}\in L$ satisfies either $\clone{Z}\leq \clone{Y}$ or $\clone{X}\leq \clone{Z}$.
Therefore, $(\clone{X},\clone{Y})$ is a critical pair for $\clone{Y}$. Since $\clone{X}$ is uniquely determined by the above construction, it follows that $(\clone{X},\clone{Y})$ is the unique critical pair whose second component is $\clone{Y}$.

The following table lists, for each meet irreducible element $\clone{Y}$, the unique element $\clone{X}$ such that $(\clone{X},\clone{Y})$ is a critical pair, together with the lemma establishing that $\clone{X}\not\leq\clone{Y}$.

\begin{center}
\begin{tabular}{ |l|l|l| } 
 \hline
 $Y$                       & $X$                                       & $X\not\leq Y$ \\
 \hline
 $\Bc_2$                   & $\Lc$                                     & \ref{B2} \\
 $\Bc_2^\leq$               & $\C{\Lc,\Bc_\infty}$                       & \ref{B2leq} \\
 $\Bc_i$ for $i\geq 3$     & $\C{\Lc,\Cc^\leq,\Bc_{i-1}^\leq}$           & \ref{Bi} \\
 $\Bc_i^\leq$ for $i\geq 3$ & $\C{\Lc,\Cc^\leq,\Bc_\infty,\Bc_{i-1}^\leq}$ & \ref{Bileq} \\
 $\Ac$                     & $\C{\Lc,\Cc^\leq,\Bc_\infty^\leq}$          & \ref{A} \\
 $\Cc$                     & $\Ac$                                     & \ref{C} \\
 $\Cc^\leq$                 & $\C{\Lc,\Ac}$                             & \ref{Cleq} \\
 $\Lc$                     & $\C{\Cc^\leq,\Ac}$                         & \ref{L} \\
 idempotent                & all functions                             & \ref{idemp} \\
 projections               & $\C{\Lc,\Cc^\leq,\Ac}$                     & \ref{projp} \\
 \hline
\end{tabular}
\end{center}
\end{proof}

\begin{lemma}
\label{final}
  For every $\clone{X},\clone{Y}\in L$, if $\clone{X}\not\preceq \clone{Y}$, then $\clone{X}\not\leq \clone{Y}$.
\end{lemma}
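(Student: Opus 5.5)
Given $\clone{X}\not\preceq\clone{Y}$ in $L$, we reduce to the case where $\clone{Y}$ is meet irreducible and then invoke Lemma~\ref{allineq}.

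First, write $\clone{Y}$ as a meet of meet irreducible elements of $L$. Every element of $L$ is a product of clones from Figure~\ref{palandairred}, and the product is the meet. For instance, $\Bc_\infty$ and $\Bc_\infty^{\leq}$ decompose as $\bigwedge_i \Bc_i$ and $\bigwedge_i\Bc_i^{\leq}$, though only finitely many factors are needed up to trivial equivalence except in these limit cases. So $\clone{Y}=\bigwedge_{k}\clone{Y}_k$ with each $\clone{Y}_k$ meet irreducible. If $\clone{X}\preceq\clone{Y}_k$ held for every $k$, then, since $\clone{Y}$ is the meet in $(L,\preceq)$, we would get $\clone{X}\preceq\clone{Y}$. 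Hence some meet irreducible $\clone{Y}_k$ satisfies $\clone{X}\not\preceq\clone{Y}_k$. This step is purely order-theoretic.

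Second, apply Lemma~\ref{allineq} to $\clone{Y}_k$. Let $\clone{X}_k$ be its critical partner. By the dichotomy in that lemma, $\clone{X}\not\preceq\clone{Y}_k$ forces $\clone{X}_k\preceq\clone{X}$, and by Lemma~\ref{ineq-triv} this gives $\clone{X}_k\leq\clone{X}$. Suppose now that $\clone{X}\leq\clone{Y}$. Since $\clone{Y}\leq\clone{Y}_k$ (projection onto a factor, or again Lemma~\ref{ineq-triv}), transitivity of $\leq$ yields $\clone{X}_k\leq\clone{X}\leq\clone{Y}\leq\clone{Y}_k$. This contradicts the third item of Lemma~\ref{allineq}, which says $\clone{X}_k\not\leq\clone{Y}_k$. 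Therefore $\clone{X}\not\leq\clone{Y}$.

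The only delicate point is the first step for $\clone{Y}$ involving $\Bc_\infty$ or $\Bc_\infty^{\leq}$, where the decomposition into meet irreducibles is infinite. This is harmless: $L$ is a complete lattice, and the meet property $\clone{X}\preceq\clone{Y}_k$ for all $k$ $\Rightarrow$ $\clone{X}\preceq\bigwedge_k\clone{Y}_k$ holds for arbitrary meets. We also need $\clone{Y}\leq\clone{Y}_k$ in the minion sense. When $\clone{Y}_k=\Bc_i$ and $\clone{Y}$ contains the factor $\Bc_\infty$, we use $\Bc_\infty\subseteq\Bc_i$, so the inclusion is a minion homomorphism; in general this is the statement $\clone{Y}\preceq\clone{Y}_k\Rightarrow\clone{Y}\leq\clone{Y}_k$ of Lemma~\ref{ineq-triv}. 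One also checks that the top element (all functions) and the bottom element fit the same scheme, since the table in Lemma~\ref{allineq} covers them.
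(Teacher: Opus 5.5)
Your proof is correct. Its second half is exactly the paper's: take the critical partner $\clone{X}_k$, deduce $\clone{X}_k\preceq\clone{X}$ from the dichotomy in Lemma~\ref{allineq}, and let the chain $\clone{X}_k\leq\clone{X}\leq\clone{Y}\leq\clone{Y}_k$ contradict $\clone{X}_k\not\leq\clone{Y}_k$.

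The difference is in how you obtain a meet irreducible $\clone{Y}'\succeq\clone{Y}$ with $\clone{X}\not\preceq\clone{Y}'$.

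\begin{itemize}
\item The paper takes $\clone{Y}_1$ maximal in $\{\clone{S}:\clone{S}\succeq\clone{Y},\ \clone{X}\not\preceq\clone{S}\}$, which exists because $L$ has no infinite ascending chains. It notes that $\clone{Y}_1$ is then maximal in $\{\clone{S}:\clone{X}\not\preceq\clone{S}\}$ and gets meet irreducibility from the general Lemma~\ref{meet-irred}. That step needs no knowledge of which elements of $L$ are meet irreducible.
\item You instead use the structural fact, asserted in the proof of Lemma~\ref{allineq}, that every element of $L$ is a possibly infinite meet of clones from Figure~\ref{palandairred}. After that you only need the greatest-lower-bound property.
\end{itemize}

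Your route avoids the chain-condition argument but depends on the decomposition being valid in $(L,\preceq)$. In particular it needs $\Bc_\infty=\bigwedge_i\Bc_i$ and $\Bc_\infty^{\leq}=\bigwedge_i\Bc_i^{\leq}$ in $L$. This does hold: a finite product lying $\preceq$ every $\Bc_i$ has one factor below infinitely many, hence all, members of the decreasing chain $(\Bc_i)$, and so below $\Bc_\infty$.

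In a complete lattice the two statements are equivalent: ``every element is a meet of completely meet irreducibles'' and ``for $x\not\preceq y$ there is a completely meet irreducible $m\succeq y$ with $x\not\preceq m$''. The paper proves the second directly from the chain condition; you take the first from the explicit description.

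One small correction to your last remark: the top element never occurs as $\clone{Y}$. Its decomposition is the empty meet, and $\clone{X}\not\preceq\clone{Y}$ is impossible when $\clone{Y}$ is the top. The table covers ``all functions'' only as a critical partner $\clone{X}$, so there is nothing to check there.
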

\begin{proof}
  Let $\clone{Y}_1$ be the maximum element of the set $\{\clone{S}: \clone{S}\succeq \clone{Y},\ \clone{X}\not\preceq \clone{S}\}$. Such an element exists since $L$ contains no infinite ascending chains. Note that $\clone{Y}_1\succeq \clone{Y}$. 
  
  Then observe that $\clone{Y}_1$ is also the maximum element of the set $\{\clone{S}: \clone{X}\not\preceq \clone{S}\}$, and thus Lemma \ref{meet-irred} implies that $\clone{Y}_1$ is meet irreducible. 

Applying Lemma~\ref{allineq}, let $\clone{X}_1$ be the unique element such that $(\clone{X}_1,\clone{Y}_1)$ is a critical pair. Since $\clone{X}\not\preceq \clone{Y}_1$, Lemma~\ref{allineq} further implies that $\clone{X}_1\preceq \clone{X}$.

Suppose, for the sake of contradiction, that $\clone{X}\leq \clone{Y}$. Since $\clone{X}_1\preceq \clone{X}$ and $\preceq$ implies $\leq$, we obtain $\clone{X}_1\leq \clone{X}$. Together with $\clone{X}\leq\clone{Y}$ and $\clone{Y}\preceq\clone{Y}_1$, which again implies $\clone{Y}\leq\clone{Y}_1$, we get $\clone{X}_1\leq \clone{X}\leq \clone{Y}\leq \clone{Y}_1$. By transitivity, $\clone{X}_1\leq \clone{Y}_1$.

However, Lemma~\ref{allineq} states that $\clone{X}_1\not\leq \clone{Y}_1$, yielding a contradiction. Therefore, $\clone{X}\not\leq \clone{Y}$.
\end{proof}

\begin{theorem}  
The minion homomorphism poset on the class of products of Boolean clones is the poset $L$ shown in Figure~\ref{lattice}.
\end{theorem}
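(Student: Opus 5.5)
The theorem follows by combining Lemma~\ref{ineq-triv} with Lemma~\ref{final}, together with a short reduction showing that every product of Boolean clones is represented by an element of $L$. We need that for any two products $\clone{X}$, $\clone{Y}$ of Boolean clones, $\clone{X}\le\clone{Y}$ holds if and only if their representatives satisfy $\clone{X}\preceq\clone{Y}$. Then $\le$ and $\preceq$ induce the same partial order on the classes, so the quotient by $\sim$ coincides with the quotient by $\approx$, and the latter is $L$ by construction (Figure~\ref{lattice}).

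The first step is the reduction to $L$. A product of arbitrary Boolean clones may have factors that are not idempotent. If some factor contains a constant, it is the top element, so it can be dropped from the product, since meeting with the top changes nothing; an empty product corresponds to the top, i.e.\ the clone of all functions. Otherwise, by the observation preceding the definition of $\PolId$, each factor is equivalent to an idempotent Boolean clone, hence by Figure~\ref{palanda} to one of the listed clones. Replacing factors by equivalent ones preserves $\sim$ of the product, because the product is the meet in the minion homomorphism poset. The factors can then be reduced to pairwise incomparable ones by Lemma~\ref{incomparable-prod}. The only subtlety here is the top element, and the element ``idempotent'' / ``all functions'' rows in Lemma~\ref{allineq}; I will handle them as elements of $L$, as the table in Lemma~\ref{allineq} already does.

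The second step is the equivalence of orders. The direction $\preceq\Rightarrow\le$ is Lemma~\ref{ineq-triv}. The direction $\le\Rightarrow\preceq$ is the contrapositive of Lemma~\ref{final}, which was proved using the critical pairs of Lemma~\ref{allineq} and the separating identities of Lemmas~\ref{B2}--\ref{projp}. Consequently $\clone{X}\sim\clone{Y}$ iff $\clone{X}\approx\clone{Y}$. This gives that the map from $L$ to the minion homomorphism poset restricted to products is injective and order-preserving and order-reflecting. By the first step it is also surjective.

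The main obstacle is not computational, since everything has been done in the lemmas. It is bookkeeping: verifying that the elements of $L$ used in Lemma~\ref{final} really exhaust all products up to $\approx$, including the infinite-index clones $\Bc_\infty$ and $\Bc_\infty^{\le}$ and the top element. I would check this by noting, as in the discussion of Figure~\ref{lattice}, that the free meet-completion of the lattice in Figure~\ref{palanda} is complete, so every finite product lands in $L$.
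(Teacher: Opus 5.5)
Your proposal is correct and matches the paper's proof, which simply combines Lemma~\ref{ineq-triv} ($\preceq\Rightarrow\le$) with Lemma~\ref{final}, whose contrapositive gives $\le\Rightarrow\preceq$. The reduction of arbitrary factors to the representatives in Figure~\ref{palanda} is bookkeeping that the paper leaves implicit in its remarks on idempotency; one small correction is that finite products lie in $L$ by definition, so completeness of the meet-completion is not needed for that step and is used instead in Lemma~\ref{allineq}.
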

\begin{proof}
The orders $\leq$ and $\preceq$ coincide. The lemmas \ref{ineq-triv} and \ref{final} establish the two needed implications. Therefore, the lattice of $\preceq$ shown in Figure \ref{lattice} is precisely the lattice corresponding to $\leq$.
\end{proof}

\section{Cores}

In this section, we determine cores for all clones shown in Figure~\ref{lattice}. To this end, we first introduce some additional terminology.  

\begin{definition}[function minion]
  Let $\set{A}$ and $\set{B}$ be sets. A set of functions $\minion{M}$ from $\set{A}$ to $\set{B}$, of arbitrary arities,  is called a {\em minion} if for every $n$-ary $f\in \minion{M}$, every $m\geq 1$, and every choice of indices $i_1,i_2,\dots,i_n\in [m]$, we have
  $$f\circ(\pi_{i_1}^m,\dots,\pi_{i_n}^m)\in \minion{M}$$
\end{definition}

When discussing minions, we will always mean function minions. As in the case of clones, we will also consider multisorted minions. We therefore restate the definition in the multisorted setting. 

\begin{definition}[multisorted function minion] 
Let $\set{A}=(A_1,\dots,A_k)$ and $\set{B}=(B_1,\dots,B_k)$ be $k$-sorted sets. A {\em multisorted minion} $\minion{M}$ is a collection of $k$-tuples of functions of arbitrary arities such that, for every $n\geq 1$, each $n$-ary element of $\minion{M}$ has the form 
$$(f_1,\dots,f_k),$$ 
where $f_i\colon A_i^n \to B_i$ for all $i\in [k]$. Moreover, $\minion{M}$ is closed under taking minors: for every $n$-ary $f=(f_1,\dots,f_k)\in \minion{M}$ , every $m\geq 1$, and every choice of indices $i_1,\dots,i_n\in [m]$, we have $$f\circ (\pi_{i_1}^m,\dots,\pi_{i_n}^m)\in \minion{M}.$$
\end{definition}

Note that the definition of a minion homomorphism applies to arbitrary minions. Consequently, within a clone equivalence class modulo minion homomorphisms, one may consider minions that are not necessarily clones themselves. Furthermore, we are interested in, in a sense, the smallest minions within these equivalence classes, known as {\em minion cores}. 

\begin{definition}[minion core]
A minion $\minion{M}$ is called a {\em minion core} if every minion endomorphism of $\minion{M}$ is an automorphism.
\end{definition}

\begin{definition}[core of a clone]
  Let $\clone{A}$ be a clone. We say that a minion $\minion{M}$ is a \emph{core} of $\clone{A}$ if $\clone{A}\sim\minion{M}$ and $\minion{M}$ is a minion core.
\end{definition}

We will see that, in the Boolean setting, a minion homomorphism is uniquely determined by its action on binary functions. This yields a finite characterization of minion homomorphisms \cite{kapytka}.

\begin{lemma}
  \label{bin-equal}
Let $\minion{M}$ and $\minion{N}$ be Boolean minions. Let $\psi,\phi\colon \minion{M}\to\minion{N}$ be minion homomorphisms such that $\psi(f)=\phi(f)$ for every binary function $f\in\minion{M}$. Then $\psi=\phi$.
\end{lemma}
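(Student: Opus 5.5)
The idea is to show that every function in a Boolean minion is recovered, via a binary minor and a Boolean test, from a value that any minion homomorphism must preserve. Let $f\in\minion{M}$ be $n$-ary, and assume first that $\minion{M}$ and $\minion{N}$ are single-sorted; the multisorted case is identical sort by sort, since minors act componentwise. To show $\psi(f)=\phi(f)$, it suffices to check that these two $n$-ary functions $\{0,1\}^n\to B$ agree at every point $a\in\{0,1\}^n$.

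Fix $a\in\{0,1\}^n$ and define $i_j=1$ if $a_j=0$ and $i_j=2$ if $a_j=1$. The binary minor $g=f\circ(\pi^2_{i_1},\dots,\pi^2_{i_n})$ lies in $\minion{M}$ and satisfies $g(0,1)=f(a)$. More generally, for any $u\in\{0,1\}^n$ we have
\[
  \big(u\circ(\pi^2_{i_1},\dots,\pi^2_{i_n})\big)(0,1)=u(a)
\]
whenever $u$ is any $n$-ary function with domain $\{0,1\}$, because the tuple $(\pi^2_{i_1}(0,1),\dots,\pi^2_{i_n}(0,1))$ is exactly $a$. This is the key observation: evaluating an $n$-ary Boolean-domain function at $a$ equals evaluating its binary minor (along the pattern of $a$) at $(0,1)$.

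Now apply the homomorphism property. We have
\[
  \psi(f)(a)=\big(\psi(f)\circ(\pi^2_{i_1},\dots,\pi^2_{i_n})\big)(0,1)=\psi(g)(0,1),
\]
and in the same way $\phi(f)(a)=\phi(g)(0,1)$. Since $g$ is binary, the hypothesis gives $\psi(g)=\phi(g)$, so $\psi(f)(a)=\phi(f)(a)$. As $a$ was arbitrary, $\psi(f)=\phi(f)$.

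The only point needing care is that this argument uses that the domain of $\minion{N}$ is Boolean, so that every input point $a$ is realized by a binary minor pattern. The codomain may be arbitrary. The degenerate cases $n=1$ and constant $a$ are covered by the same construction: the resulting map $[n]\to[2]$ need not be surjective, which is allowed by the definition of a minor. In the multisorted setting, a point is a tuple $(a^{(1)},\dots,a^{(k)})$ with possibly different patterns in each sort. I would handle this by evaluating each sort separately, using, for sort $s$, the binary minor determined by $a^{(s)}$, and then comparing the $s$-th components. I expect no real obstacle here.
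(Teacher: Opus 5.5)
Your proposal is correct and matches the paper's proof. You encode the point $a$ by the binary minor $g=f\circ(\pi^2_{i_1},\dots,\pi^2_{i_n})$ and use preservation of minors to get $\psi(f)(a)=\psi(g)(0,1)=\phi(g)(0,1)=\phi(f)(a)$, exactly as the paper does; your sort-by-sort remark also correctly covers the multisorted version the paper states separately, and the only slip is the phrase ``for any $u\in\{0,1\}^n$'', which should read ``for any $n$-ary function $u$ on $\{0,1\}$''.
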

\begin{proof}
  Let $f$ be $n$-ary function in $\minion{M}$. We show that $\psi(f)=\phi(f)$. 
  
  Fix a tuple $a=(a_1,a_2,\dots,a_n)\in\{0,1\}^n$. 
  Let $\pi=(p_1,\dots,p_n)$, where 
  $$p_i= \begin{cases} \pi_1^2,& \text{if } a_i=0,\\ \pi_2^2,& \text{if } a_i=1. \end{cases} $$
  Then $f\circ\pi$ is a binary function in $\minion{M}$, and therefore $\psi(f\circ\pi)=\phi(f\circ\pi)$. 
  Since $\psi$ and $\phi$ are minion homomorphisms, they preserve minors. Hence
  $$\psi(f)\circ\pi = \psi(f\circ\pi) = \phi(f\circ\pi) = \phi(f)\circ\pi.$$ Evaluating both sides at $(0,1)$ yields
  $$\psi(f)(a) = \psi(f\circ\pi)(0,1) = \phi(f\circ\pi)(0,1) = \phi(f)(a)$$
Since this holds for every $a\in\{0,1\}^n$, we conclude that $\psi(f)=\phi(f)$. 
\end{proof}

Similarly, the same argument extends to the multisorted setting.

\begin{lemma}
Let $\minion{M}$ and $\minion{N}$ be $k$-sorted Boolean minions. Let $\psi,\phi\colon\minion{M}\to\minion{N}$ be minion homomorphisms such that
 $\psi(f)=\phi(f)$ for every $k$-tuple of binary function $f\in\minion{M}$. Then $\psi=\phi$.
\end{lemma}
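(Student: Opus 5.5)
The plan is to repeat the argument of Lemma~\ref{bin-equal} verbatim, with $k$-sorted minors in place of single-sorted ones. The one point to check is that a single choice of binary minor works simultaneously in all sorts. It does, because $k$-sorted minor-taking applies the same tuple of projections $(\pi_{i_1}^m,\dots,\pi_{i_n}^m)$ to every component.

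Let $f=(f_1,\dots,f_k)\in\minion{M}$ be $n$-ary. We must show that $\psi(f)=\phi(f)$, that is, that the $j$-th components agree for every sort $j\in[k]$. Fix a sort $j$ and a tuple $a=(a_1,\dots,a_n)\in\{0,1\}^n$. As before, define $\pi=(p_1,\dots,p_n)$ by $p_i=\pi_1^2$ if $a_i=0$ and $p_i=\pi_2^2$ if $a_i=1$. Then $f\circ\pi=(f_1\circ\pi,\dots,f_k\circ\pi)$ is a $k$-tuple of binary functions lying in $\minion{M}$, since $\minion{M}$ is closed under minors. By hypothesis, $\psi(f\circ\pi)=\phi(f\circ\pi)$.

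Since $\psi$ and $\phi$ preserve minors, we get $\psi(f)\circ\pi=\psi(f\circ\pi)=\phi(f\circ\pi)=\phi(f)\circ\pi$ as $k$-sorted functions. Taking the $j$-th component gives $\psi(f)_j\circ\pi=\phi(f)_j\circ\pi$. Evaluating at $(0,1)$ in sort $j$, where the domain $\{0,1\}$ is Boolean, yields $\psi(f)_j(a)=\phi(f)_j(a)$.

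Since $j$ and $a$ were arbitrary, $\psi(f)=\phi(f)$. There is no real obstacle here; the only care needed is the bookkeeping that the same projection tuple $\pi$ serves for each sort, which holds by the definition of $k$-sorted composition.
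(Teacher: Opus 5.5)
Your proposal is correct and is exactly the argument the paper intends: the paper simply says the proof of Lemma~\ref{bin-equal} extends to the multisorted setting, and you carry out that extension. You also correctly note the one point to check, namely that $k$-sorted minor-taking uses the same projection tuple in every sort, so a single binary minor $f\circ\pi$ recovers the values at $a$ in all sorts simultaneously.
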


Finally, we show a useful criterion for proving that a minion is a minion core.

\begin{lemma}
  Let $\minion{M}$ be a Boolean minion. If every binary function of $\minion{M}$ lies in the image of every endomorphism of $\minion{M}$, then $\minion{M}$ is a minion core.
\end{lemma}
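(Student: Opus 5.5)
Let $\psi$ be an arbitrary endomorphism of $\minion{M}$. We want to show that $\psi$ is an automorphism, i.e.\ a bijection whose inverse is also a minion homomorphism. The plan is to work first on the binary part and then lift to all arities using Lemma~\ref{bin-equal}.

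\emph{Step 1: reduce to the binary part.} Let $\minion{M}^{(2)}$ denote the set of binary functions in $\minion{M}$. Since $\{0,1\}^2\to\{0,1\}$ (or, in the multisorted case, the corresponding finite product) admits only finitely many functions, $\minion{M}^{(2)}$ is finite. Because $\psi$ preserves arities, it restricts to a map $\psi^{(2)}\colon\minion{M}^{(2)}\to\minion{M}^{(2)}$. By hypothesis every binary function lies in the image of $\psi$, and only binary functions can map to binary functions, so $\psi^{(2)}$ is surjective. A surjection of a finite set onto itself is a bijection, so $\psi^{(2)}$ is a permutation of the finite set $\minion{M}^{(2)}$. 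Hence there is some $k\ge 1$ with $(\psi^{(2)})^k=\mathrm{id}$.

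\emph{Step 2: lift to all arities.} Compositions of minion homomorphisms are minion homomorphisms, so $\psi^k$ is an endomorphism of $\minion{M}$. It agrees with the identity endomorphism on all binary functions. By Lemma~\ref{bin-equal} (or its multisorted version), $\psi^k=\mathrm{id}_{\minion{M}}$. Consequently $\psi$ is a bijection, with inverse $\psi^{k-1}$; when $k=1$, $\psi$ is itself the identity.

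\emph{Step 3: check the inverse.} The inverse $\psi^{k-1}$ is a composition of minion homomorphisms, hence itself a minion homomorphism. So $\psi$ is an automorphism, and since $\psi$ was arbitrary, $\minion{M}$ is a minion core.

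The only real point, which is where I would be careful, is Step 1. Surjectivity on binaries must come from the hypothesis together with arity preservation: a preimage of a binary function has to be binary. Finiteness of $\minion{M}^{(2)}$ is what turns surjectivity into bijectivity; this is exactly where the Boolean (finite-domain) assumption is used. The rest is the formal argument that a power of $\psi$ is the identity, which relies on Lemma~\ref{bin-equal}.
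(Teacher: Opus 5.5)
Your proposal is correct and follows essentially the same route as the paper: surjectivity on the finite set of binary functions gives a permutation, so some power $\psi^k$ is the identity on binary functions, and Lemma~\ref{bin-equal} then makes $\psi^k$ the identity on all of $\minion{M}$. Your explicit remark that the inverse $\psi^{k-1}$ is again a minion homomorphism fills in a small detail that the paper leaves implicit.
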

\begin{proof}
  Let $\psi$ be an endomorphism of $\minion{M}$. We show that $\psi$ is bijective.

  By assumption, $\psi$ is surjective on binary functions. Since there are only finitely many binary functions, $\psi$ induces a permutation of them.

  Note that on binary functions $\psi$ is a permutation, hence there exists $n$ such that $\psi^n$ is the identity on binary functions. But then from Lemma \ref{bin-equal} $\psi^n$ is an identity everywhere, but that means that $\psi$ had to be bijective.  
\end{proof}

Similarly, the previous lemma extends to the multisorted setting.

\begin{lemma}
  \label{proof-minion}
  Let $\minion{M}$ be a $k$-sorted Boolean minion. If every $k$-tuple of binary functions in $\minion{M}$ lies in the image of every endomorphism of $\minion{M}$, then $\minion{M}$ is a minion core.
\end{lemma}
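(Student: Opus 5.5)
The plan is to copy the single-sorted argument, applying it to $k$-tuples of binary functions in place of single binary functions. Let $\psi$ be an endomorphism of $\minion{M}$. The task is to show that $\psi$ is bijective.

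First, $\psi$ preserves arities, so it maps the set $\minion{M}^{(2)}$ of binary members of $\minion{M}$ into itself. Each binary member is a $k$-tuple $(f_1,\dots,f_k)$ with $f_i\colon\{0,1\}^2\to\{0,1\}$. There are at most $16^k$ such tuples, so $\minion{M}^{(2)}$ is finite. By hypothesis every element of $\minion{M}^{(2)}$ lies in the image of $\psi$. Since $\psi$ can only hit binary members with binary arguments, $\psi$ restricted to $\minion{M}^{(2)}$ is a surjection of a finite set onto itself, hence a permutation. Therefore there is some $n\geq 1$ (for instance the order of this permutation) such that $\psi^n$ fixes every binary member of $\minion{M}$.

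Next, $\psi^n$ is again a minion endomorphism, since compositions of minion homomorphisms preserve arities and minors. The multisorted version of Lemma~\ref{bin-equal} applies to $\psi^n$ and the identity endomorphism of $\minion{M}$, because these two agree on all $k$-tuples of binary functions. This gives $\psi^n=\mathrm{id}_{\minion{M}}$. Its proof is the same as before: for an $n$-ary $f=(f_1,\dots,f_k)$ and a fixed tuple $a_i\in\{0,1\}^n$ for each sort, form the binary minor that sends coordinate $j$ to $\pi_1^2$ or $\pi_2^2$ according to $a_j$, and evaluate at $(0,1)$ sortwise. The one point needing care is that the minor pattern must be shared by all sorts, while the points $a_i$ may differ from sort to sort. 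I would handle this by taking a separate binary minor for each sort $i$ and comparing only the $i$-th components. Since $\psi^n(f)\circ\pi$ and $\mathrm{id}(f)\circ\pi$ agree as tuples, they agree in every component, so this works.

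Finally, from $\psi^n=\mathrm{id}$ it follows that $\psi$ is injective, and also surjective, with inverse $\psi^{n-1}$ (taking $\psi^0=\mathrm{id}$). Hence $\psi$ is a bijective minion endomorphism. Its inverse $\psi^{n-1}$ is itself a minion homomorphism, so $\psi$ is an automorphism. As $\psi$ was arbitrary, $\minion{M}$ is a minion core.

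The only step I see as a possible obstacle is the sortwise bookkeeping in the multisorted Lemma~\ref{bin-equal}. Even there, using one minor per sort and projecting to that sort's component makes it routine.
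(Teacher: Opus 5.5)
Your proposal is correct and follows the paper's argument: $\psi$ restricts to a permutation of the finitely many binary $k$-tuples, so some power $\psi^n$ fixes all of them, and the multisorted version of Lemma~\ref{bin-equal} then gives $\psi^n=\mathrm{id}$, hence $\psi$ is bijective. Your sortwise proof of that multisorted lemma (one binary minor for each sort and each point) and your check that $\psi^{n-1}$ is a homomorphic inverse spell out details the paper leaves implicit.
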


For certain special $n$-tuples, we use the following notation:
\begin{align*}
  1_i &= (\overbrace{111\cdots 1}^i\ \overbrace{000\cdots 0}^{n-i}) \cr
  0_i &= (000\cdots 0\ 111\cdots 1) \cr
  x_i &= (xxx\cdots x\ yyy\cdots y) \cr
  y_i &= (yyy\cdots y\ xxx\cdots x) 
\end{align*}
Note that the value of $n$ is omitted from the notation and will always be clear from the context.

\begin{definition}[permuting variables] Let $\pi\colon [n]\to [n]$ be a permutation, and let $a=(a_1,\dots,a_n)$ be a tuple of length $n$. We denote by $\pi(a)$
the tuple obtained by permuting the coordinates of $a$ according to $\pi$, namely
$$\pi(a) = (a_{\pi(1)},a_{\pi(2)},\dots,a_{\pi(n)})$$
\end{definition}

\begin{definition}[$k$-pseudosymmetric] 
Let $f$ be an $n$-ary function. We say that $f$ is {\em $k$-pseudosymmetric} if for every permutation $\pi:[n]\to[n]$,
$$f(x_k)=f(\pi(x_k)).$$
Equivalently, the value of $f$ depends only on the fact that its input contains exactly $k$ ones, and not on the positions of those ones.
\end{definition}

\begin{remark}
By symmetry between $x$ and $y$, an $n$-ary function is $k$-pseudosymmetric if and only if it is $(n-k)$-pseudosymmetric.
\end{remark}

\begin{lemma}
    Let $f\in \Lc$ be $n$-ary function that is $k$-pseudosymmetric for some $0<k<n$. Then $f$ is the sum of all variables.
\end{lemma}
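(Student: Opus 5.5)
Write $f(x_1,\dots,x_n)=a_1x_1+\dots+a_nx_n$ with $a_i\in\{0,1\}$, using the description of $\Lc$ as the idempotent linear functions. We must show $a_i=1$ for every $i$.

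First we read off what $k$-pseudosymmetry says on the tuples $x_k$. Plugging $x_k$ into $f$ gives
\[
f(x_k)=\Big(\sum_{i\le k}a_i\Big)x+\Big(\sum_{i>k}a_i\Big)y .
\]
For a permutation $\pi$, $f(\pi(x_k))$ is the same kind of expression, with coefficient of $x$ equal to $\sum_{i\in S}a_i$ and coefficient of $y$ equal to $\sum_{i\notin S}a_i$. Here $S$ is the set of positions where $\pi(x_k)$ carries $x$, and $S$ is an arbitrary $k$-element subset of $[n]$. These two expressions agree as functions of $x,y$ only if their coefficients agree. Hence the parity $\sum_{i\in S}a_i \pmod 2$ is the same for every $k$-subset $S\subseteq[n]$.

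Next we compare adjacent subsets. Since $0<k<n$, for any two distinct indices $i\ne j$ there is a $(k-1)$-subset $T$ avoiding both $i$ and $j$; this uses $k-1\le n-2$. Comparing $S=T\cup\{i\}$ with $S'=T\cup\{j\}$ gives $a_i=a_j$ in $\mathbb{Z}_2$. So all coefficients are equal: either all are $0$ or all are $1$.

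Idempotency excludes the all-zero case, since $\Lc$ requires an odd number of coefficients equal to $1$. Therefore $a_i=1$ for all $i$, and $f$ is the sum of all variables.

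There is no real obstacle here. The only points needing care are the translation of pseudosymmetry into the equality of subset-sums mod $2$, and the use of both $k\ge1$ and $k\le n-1$ in the exchange step. When $n=1$ no such $k$ exists, so that case is vacuous. As a consistency check, this also forces $n$ to be odd.
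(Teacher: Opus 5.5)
Your proof is correct and uses the same key step as the paper: exchange one position of a $k$-set for one position outside it and use linearity to compare the two values. The paper applies this once, by contradiction, to a non-dummy variable $i$ and a dummy variable $j$; you apply it to every pair $i\neq j$ to get $a_i=a_j$ directly and then use idempotency, which avoids the paper's auxiliary choice of $K$ with $|K\cup S|<n$ and makes explicit that $n$ must be odd.
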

\begin{proof}
    Let $S$ be the set of non-dummy variables of $f$. Since $f$ is idempotent, we have $|S|>0$. Suppose, for a contradiction, that $|S|<n$.
    
    Since $0<k<n$ and $0<|S|<n$, we can choose a set $K$ of variables such that $|K|=k$, $K\cap S\neq \emptyset$, and $|K\cup S| < n$. 
    Let $i\in K\cap S$ and $j\not\in K\cup S$. 
    
    Assign the value $1$ to all variables in $K$ and the value $0$ to all remaining variables. Since $f$ is $k$-pseudosymmetric, the value of $f$ remains unchanged if we replace the $i$-th variable by $0$ and the $j$-th variable by $1$.
    
    However, $i$ is non-dummy and $j$ is dummy. Since $f\in\Lc$, changing the $i$-th variable from $1$ to $0$ changes the value of $f$, whereas changing the $j$-th variable does not. This is a contradiction.

    Therefore $|S|=n$, and hence $f$ is the sum of all variables.
\end{proof}

\begin{lemma}
    Let $f\in \Cc^{\leq}$ be $\lceil{\frac{n}{2}}\rceil$-pseudosymmetric. Then $f$ is the majority on all variables.
\end{lemma}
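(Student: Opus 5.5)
The idea is to translate $\lceil n/2\rceil$-pseudosymmetry into the statement that $f$ is constant on the layer of $\{0,1\}^n$ consisting of tuples with exactly $k=\lceil n/2\rceil$ ones. Self-duality then determines $f$ on the complementary layer, and monotonicity propagates these values to every other layer.

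Write $k=\lceil n/2\rceil$. Substituting $x=1$, $y=0$ into $f(x_k)=f(\pi(x_k))$ shows that $f(\pi(1_k))=f(1_k)$ for every permutation $\pi$. So $f$ takes a constant value $c$ on all tuples with exactly $k$ ones. Since $f\in\Cc^{\leq}\subseteq\Cc$, we have $f(\overline{a})=\overline{f(a)}$. Complements of tuples with $k$ ones are exactly the tuples with $n-k$ ones, so $f$ takes the value $\overline{c}$ on every tuple with exactly $n-k$ ones.

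First suppose $n$ is even, so $n-k=k$. Then the previous step gives $c=\overline{c}$, which is a contradiction. Hence no such $f$ exists and the statement holds vacuously; this agrees with the fact that majority is only defined for odd arity.

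Now let $n$ be odd, so $n-k=k-1$. Every tuple with $k-1$ ones lies below some tuple with $k$ ones, so monotonicity of $f$ gives $\overline{c}\le c$, and therefore $c=1$. Monotonicity then pushes these values outward. Every tuple with at least $k$ ones lies above one with exactly $k$ ones, so it is mapped to $1$. Every tuple with at most $k-1$ ones lies below one with exactly $k-1$ ones, so it is mapped to $0$. Thus $f$ outputs $1$ exactly when more than half of its inputs are $1$, i.e.\ $f$ is the majority on all variables. For $n=1$ the argument is degenerate, and $f$ is the identity, which is the unary majority.

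I do not expect a real obstacle. The only points needing care are the parity case distinction, and checking that each layer can be compared with the neighbouring one by monotonicity: every $(k-1)$-tuple lies below some $k$-tuple, and conversely. Both are immediate.
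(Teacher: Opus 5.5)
Your proof is correct and follows essentially the same route as the paper. Pseudosymmetry makes $f$ constant on the middle layer, self-duality transfers the complementary value to the $n-k$ layer (which rules out even $n$), and monotonicity forces the values and propagates them outward. The only cosmetic differences are that you fix $c$ on the $\lceil n/2\rceil$ layer first, whereas the paper starts from the $\lfloor n/2\rfloor$ layer via the $k\leftrightarrow n-k$ remark, and that you spell out why even $n$ gives a contradiction, which the paper merely asserts.
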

\begin{proof}
    Since $f$ is self-dual and $\lceil{\frac{n}{2}}\rceil$-pseudosymmetric, $n$ must be odd. Moreover, because $\lceil{\frac{n}{2}}\rceil + \lfloor{\frac{n}{2}}\rfloor = n$ it follows that $f$ is also $\lfloor\frac{n}{2}\rfloor$-pseudosymmetric.

Since $f$ is both $\lfloor\frac{n}{2}\rfloor$-pseudosymmetric and $\lceil\frac{n}{2}\rceil$-pseudosymmetric, there exists a constant $c$ such that $f$ is equal to $c$ on every input containing exactly $\lfloor\frac{n}{2}\rfloor$ ones. By self-duality, $f$ is equal to $\bar c$ on every input containing exactly $\lceil\frac{n}{2}\rceil$ ones.

Monotonicity implies that $c=0$. Consequently, every input containing fewer than $\lfloor\frac{n}{2}\rfloor$ ones is also mapped to $0$. Similarly, we have $\bar c=1$, and every input containing more than $\lceil\frac{n}{2}\rceil$ ones is mapped to $1$. 

Therefore, $f$ evaluates to $1$ precisely on those inputs containing more than half of their coordinates equal to $1$. Hence $f$ is the majority function on all variables.
\end{proof}

\begin{definition}[$\beta(n,k)$]
Let $\beta(n,k) = \lfloor\frac{k-1}{k}\cdot n\rfloor$.
\end{definition}

\begin{lemma}
Let $f$ be an $n$-ary function compatible with $\relation{R}_k$ and let $b \leq \beta(n,k)$. If $f$ is $b$-pseudosymmetric, then $f(1_b) = 0$.
\end{lemma}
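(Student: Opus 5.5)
We argue by contradiction: suppose $f(1_b)=1$. Since $f$ is $b$-pseudosymmetric, $f(a)=1$ for every tuple $a\in\{0,1\}^n$ with exactly $b$ ones, wherever those ones sit. (If $b=0$, then $1_b$ is the all-zero tuple and $f(1_b)=0$ holds trivially; idempotence is not even needed, since compatibility with $\relation{R}_k$ applied to the constant column $(0,\dots,0)$ already forces it. So assume $b\geq 1$.) The plan is to choose $k$ tuples $a^1,\dots,a^k$, each with exactly $b$ ones, such that no coordinate is $1$ in all of them. Write them as the rows of a $k\times n$ matrix. Every column of this matrix contains a $0$, so every column lies in $\relation{R}_k=\{0,1\}^k\setminus\{(1,\dots,1)\}$. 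Applying $f$ row by row then gives the tuple $(1,1,\dots,1)\notin\relation{R}_k$, which contradicts compatibility.

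It remains to build the rows, and this is the only real step. Counting zeros gives a necessary condition: each row has exactly $n-b$ zeros, and all $n$ columns must be covered by at least one zero, so we need $k(n-b)\geq n$, that is $b\leq \frac{k-1}{k}n$. Since $b$ is an integer, this is exactly the hypothesis $b\leq\beta(n,k)=\lfloor\frac{k-1}{k}n\rfloor$.

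Conversely, when $k(n-b)\geq n$ we place the zeros cyclically. Row $j$ (for $j=1,\dots,k$) gets zeros in the $n-b$ consecutive positions $(j-1)(n-b)+1,\dots,j(n-b)$, read modulo $n$, and ones everywhere else. Because $n-b\leq n$, each row has exactly $n-b$ distinct zero positions and hence exactly $b$ ones. The union of the $k$ blocks is a run of $k(n-b)\geq n$ consecutive residues mod $n$, so every coordinate receives a zero. One could equally take the zero sets to cover $[n]$ greedily and pad each to size $n-b$ with arbitrary extra zeros.

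The main obstacle is only in aligning the counting condition with the definition of $\beta(n,k)$ and handling the edge cases $b=0$ and $b$ close to $n$. Once the rows exist, the contradiction with $\relation{R}_k$ is immediate, so $f(1_b)=0$.
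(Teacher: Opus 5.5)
Your proof is correct and is essentially the paper's argument. The paper also builds $k$ rows with exactly $b$ ones in consecutive cyclic blocks modulo $n$. The only difference is that it places the blocks of ones and bounds each column's count of ones by $\lceil kb/n\rceil\le k-1$, whereas you place the complementary blocks of $n-b$ zeros and observe that $k(n-b)\ge n$ makes them cover every coordinate. This is the same construction up to reflection, and your phrasing makes the equivalence $k(n-b)\ge n \iff b\le\beta(n,k)$ slightly more transparent.
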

\begin{proof}
Suppose, for a contradiction, that $f(1_b)=1$. Since $f$ is $b$-pseudosymmetric, it follows that $f$ evaluates to $1$ on every tuple containing exactly $b$ ones. 

We construct tuples $p_0,\dots,p_{k-1}\in\{0,1\}^n$ as follows. Index the coordinates by the elements of $\mathbb Z_n$. 
For each $i\in\{0,\dots,k-1\}$, let $p_i$ have ones precisely in the positions
$$bi, bi+1,\dots, bi+(b-1) \pmod n.$$
Thus each tuple contains exactly $b$ ones.

For every coordinate, the number of tuples containing a $1$ in that coordinate is at most $\lceil b\cdot \frac{k}{n}\rceil$. Since  
$$\left\lceil b\cdot \frac{k}{n}\right\rceil \leq \left\lceil\beta(n,k)\cdot \frac{k}{n}\right\rceil = \left\lceil\left\lfloor\frac{k-1}{k}\cdot n\right\rfloor \cdot \frac{k}{n}\right\rceil \leq (k-1)$$
Therefore no coordinate contains a $1$ in all $k$ tuples. This contradicts the assumption that $f$ is compatible with $\relation{R}_k$.
\end{proof}

\begin{remark}
The above lemma applies to function from $\Bc_k$ and $\Bc_k^\leq$.
\end{remark}

\begin{lemma}
Let $k,n\in \mathbb{N}$ and let $b > \beta(n,k)$. Define $\atleast{b}$ be the symmetric $n$-ary function that returns $1$ if and only if the input contains at least $b$ ones. Then $\atleast{b}\in B_k$ and $\atleast{b}\in B^\leq_k$.
\end{lemma}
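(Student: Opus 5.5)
The plan is a direct counting (pigeonhole) argument: show that $\atleast{b}$ is idempotent, monotone, and satisfies the characterization of $\Bc_k$ recalled earlier. Throughout I assume $b\le n$, which the statement implicitly needs: for $b>n$ the function $\atleast{b}$ is constantly $0$ and hence not idempotent.

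First, $\atleast{b}$ is idempotent. Since $\beta(n,k)\ge 0$, we have $1\le b\le n$, so $\atleast{b}(0\cdots 0)=0$ and $\atleast{b}(1\cdots 1)=1$. It is also clearly monotone: raising a coordinate from $0$ to $1$ cannot decrease the number of ones. Hence $\atleast{b}$ preserves $\leq$. Therefore both memberships follow once I show compatibility with $\relation{R}_k$.

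For $\relation{R}_k$ I use the characterization of $\Bc_k$ recalled above. It suffices to show that for any $S\subseteq (\atleast{b})^{-1}(1)$ with $|S|\le k$ there is a coordinate in which every element of $S$ equals $1$. Every element of $S$ has at least $b$ ones, so at most $n-b$ zeros. The total number of zeros among the elements of $S$ is therefore at most $k(n-b)$.

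The key inequality comes from $b>\beta(n,k)=\lfloor\frac{k-1}{k}n\rfloor$. Since $b$ is an integer, this gives $b\ge \lfloor\frac{k-1}{k}n\rfloor+1>\frac{k-1}{k}n$. Hence $n-b<\frac{n}{k}$, and so $k(n-b)<n$. Fewer than $n$ zeros cannot cover all $n$ coordinates, so some coordinate carries no zero in any element of $S$. In that coordinate every element of $S$ equals $1$, which is exactly the required condition, so $\atleast{b}\in\Bc_k$.

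Since $\atleast{b}$ is also monotone, it lies in $\PolId(\relation{R}_k,\leq)=\Bc_k^{\leq}$. Alternatively, this follows from the recalled fact that the $\UP$-closure of a $\Bc_k$ function lies in $\Bc_k^{\leq}$, noting that $\atleast{b}$ already equals its own up-closure. The only delicate step is the integer rounding in $b>\beta(n,k)$, which must be turned into the strict real inequality $b>\frac{k-1}{k}n$; the rest is routine.
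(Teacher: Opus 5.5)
Your proof is correct and is essentially the paper's pigeonhole argument. The paper counts ones: there are at least $kb>(k-1)n$ ones among the $k$ tuples, so some coordinate is $1$ in all of them. You count the complementary zeros ($k(n-b)<n$), which is the same inequality; your explicit idempotency check, with the observation that it needs $b\le n$, fills a detail the paper leaves implicit.
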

\begin{proof}
Consider any $k$ tuples that are mapped to $1$ by $\atleast{b}$. Since each of them contains at least $b$ ones, the total number of ones among these tuples is at least
$$k\cdot b >  k\cdot \left\lfloor\frac{k-1}{k}\cdot n\right\rfloor \geq (k-1)\cdot n$$
$$k\cdot b \geq k\cdot(\beta(n,k)+1) = k\cdot \left(\left\lfloor\frac{k-1}{k}\cdot n\right\rfloor + 1\right) > k\cdot \left(\frac{k-1}{k}\cdot n\right) \geq (k-1)\cdot n.$$
By the pigeonhole principle, there must be a coordinate containing $1$ in all $k$ tuples. Hence $\atleast{b}$ is compatible with $\relation{R}_k$, and therefore $\atleast{b}\in\Bc_k$.
Since $\atleast{b}$ is monotone by definition, we also have $\atleast{b}\in B_k^\leq$.
\end{proof}

\subsection{Computation of cores}

We begin with several lemmas that will be useful for constructing minion homomorphisms.

\begin{definition}[dual function] 
Let $f$ be a Boolean function. The {\em dual} of $f$ is the function $f^*$ defined by $f^*(a) = \overline{f(\bar{a})}$.
\end{definition}

\begin{lemma}
  \label{binary-minion-homomorphism}
  Let $\clone{P} = \C{X_1,X_2,\dots,X_n}$ be a product of clones, let $\rm bin\colon \{0,1\}^2\to\{0,1\}$ be a binary function, and let $\alpha_1,\alpha_2$ be minion homomorphisms from $\clone{P}$ to a minion $\minion{Y}$.
  Define
  $$\alpha(f)(x) ={\rm bin}(\alpha_1(f)(x), \alpha_2(f)(x))$$
  for every $f\in\clone{P}$
  Then $\alpha$ is a minion homomorphism from $P$ to $Y$.
\end{lemma}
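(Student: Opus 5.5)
The plan is to check the two conditions in the definition of a minion homomorphism directly. The key fact is that composing with a binary operation applied pointwise commutes with taking minors, because a minor only reindexes the input variables. We work with the multisorted version: $\minion{Y}$ is a $k$-sorted Boolean minion, $\alpha_1(f)=(g_1,\dots,g_k)$, $\alpha_2(f)=(h_1,\dots,h_k)$, and ${\rm bin}$ is applied in each sort, so that $\alpha(f)=({\rm bin}\circ(g_1,h_1),\dots,{\rm bin}\circ(g_k,h_k))$. Note that ${\rm bin}\circ(g_j,h_j)$ is just ordinary function composition on $\{0,1\}$, not composition inside $\minion{Y}$.

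\emph{Arity preservation.} If $f$ is $n$-ary, then $\alpha_1(f)$ and $\alpha_2(f)$ are $n$-ary. Hence each component ${\rm bin}(g_j(x),h_j(x))$ is an $n$-ary function of $x$.

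\emph{Minor preservation.} Fix $i_1,\dots,i_n\in[m]$ and write $\sigma=(\pi^m_{i_1},\dots,\pi^m_{i_n})$. For every $x\in\{0,1\}^m$ we compute
\[
\alpha(f\circ\sigma)(x)={\rm bin}\bigl(\alpha_1(f\circ\sigma)(x),\alpha_2(f\circ\sigma)(x)\bigr)={\rm bin}\bigl(\alpha_1(f)(x_{i_1},\dots,x_{i_n}),\alpha_2(f)(x_{i_1},\dots,x_{i_n})\bigr).
\]
The second step uses that $\alpha_1$ and $\alpha_2$ preserve minors. The right-hand side equals $\alpha(f)(x_{i_1},\dots,x_{i_n})=(\alpha(f)\circ\sigma)(x)$, computed sortwise.

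\emph{Membership in $\minion{Y}$.} This is the main obstacle: the lemma must have $\alpha(f)\in\minion{Y}$, and a minion need not be closed under pointwise binary operations. The statement leaves this implicit. I would handle it in one of two ways, depending on how the lemma is used later.
\begin{itemize}
\item Take $\minion{Y}$ to be the minion of all $k$-sorted Boolean functions, or more generally any minion closed under ${\rm bin}$ applied pointwise. Then membership is automatic.
\item Alternatively, regard the target as the minion of all functions, and check membership in the intended target minion separately whenever the lemma is applied.
\end{itemize}
Under either reading, the two computations above complete the proof. The product structure of $\clone{P}$ plays no role; the argument works for any source minion.
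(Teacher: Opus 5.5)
Your argument is correct and is essentially the paper's own proof: both verify that applying ${\rm bin}$ pointwise commutes with taking minors, via $\alpha(f)\circ\sigma={\rm bin}(\alpha_1(f)\circ\sigma,\alpha_2(f)\circ\sigma)={\rm bin}(\alpha_1(f\circ\sigma),\alpha_2(f\circ\sigma))=\alpha(f\circ\sigma)$. Your caveat about membership in $\minion{Y}$ is justified. The paper's proof says nothing about it and instead secures membership in the target minion separately at each application (via Lemmas~\ref{bi-mensi}, \ref{bi-less-mensi} and~\ref{c-mensi}), which is exactly your second reading.
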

\begin{proof}
  Fix $m,n\in\mathbb N$, indices $i_1,i_2,\dots,i_n\in[m]$ and an $n$-ary function $f\in\clone{P}$. Since $\alpha_1$ and $\alpha_2$ are minion homomorphisms, we have
\begin{align*}
  \alpha_1(f)\circ(\pi_{i_1}^m,\pi_{i_2}^m,\dots,\pi_{i_n}^m) &= \alpha_1(f\circ(\pi_{i_1}^m,\pi_{i_2}^m,\dots,\pi_{i_n}^m)) \\
  \alpha_2(f)\circ(\pi_{i_1}^m,\pi_{i_2}^m,\dots,\pi_{i_n}^m) &= \alpha_2(f\circ(\pi_{i_1}^m,\pi_{i_2}^m,\dots,\pi_{i_n}^m)) \\
\end{align*}
Therefore
\begin{align*}
  \alpha(f)\circ(\pi_{i_1}^m,\pi_{i_2}^m,\dots,\pi_{i_n}^m) &= \\
        {\rm bin}\big(\alpha_1(f),\alpha_2(f))\circ( \pi_{i_1}^m,\pi_{i_2}^m,\dots,\pi_{i_n}^m\big) &= \\
        {\rm bin}\big(\alpha_1(f) \circ( \pi_{i_1}^m,\pi_{i_2}^m,\dots,\pi_{i_n}^m),\alpha_2(f) \circ( \pi_{i_1}^m,\pi_{i_2}^m,\dots,\pi_{i_n}^m)\big) &= \\
        {\rm bin}\big(\alpha_1(f \circ( \pi_{i_1}^m,\pi_{i_2}^m,\dots,\pi_{i_n}^m)),\alpha_2(f \circ( \pi_{i_1}^m,\pi_{i_2}^m,\dots,\pi_{i_n}^m))\big) &= \\
        \alpha(f \circ(\pi_{i_1}^m,\pi_{i_2}^m,\dots,\pi_{i_n}^m)) &= \\
\end{align*}
\end{proof}

\begin{lemma}
  \label{dual-minion-homomorphism}
Let $\clone{X}$ be a clone. Then the map given by $f\mapsto f^*$ is a minion homomorphism.
\end{lemma}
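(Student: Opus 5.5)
The plan is a direct verification of the two conditions in the definition of a minion homomorphism. The only thing to fix first is the codomain. I take the map $\delta\colon f\mapsto f^*$ to go from $\clone{X}$ to the set $\clone{X}^*=\{f^*: f\in\clone{X}\}$, or more generally into the minion of all Boolean functions. I will also observe that $\clone{X}^*$ is itself closed under minors, which follows from the computation below, so the target is indeed a minion. In the cases used later in this section, for example when $\clone{X}$ is one of the self-dual clones $\Cc$, $\Cc^{\leq}$, $\Lc$, we have $\clone{X}^*=\clone{X}$, and then $\delta$ is an endomorphism.

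Preservation of arities is immediate. If $f\colon\{0,1\}^n\to\{0,1\}$, then $f^*(a)=\overline{f(\bar a)}$ is again defined on $\{0,1\}^n$.

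For preservation of minors, fix an $n$-ary $f\in\clone{X}$ and indices $i_1,\dots,i_n\in[m]$, and put $g=f\circ(\pi^m_{i_1},\dots,\pi^m_{i_n})$, so that $g(a_1,\dots,a_m)=f(a_{i_1},\dots,a_{i_n})$. The key observation is that negation acts coordinatewise and therefore commutes with selecting and repeating coordinates: for every $a\in\{0,1\}^m$ the tuple $(\bar a_{i_1},\dots,\bar a_{i_n})$ is exactly the negation of $(a_{i_1},\dots,a_{i_n})$. Using this, I compute for every $a\in\{0,1\}^m$:
\begin{align*}
g^*(a) &= \overline{g(\bar a)} = \overline{f(\bar a_{i_1},\dots,\bar a_{i_n})}\\
&= f^*(a_{i_1},\dots,a_{i_n}) = \big(f^*\circ(\pi^m_{i_1},\dots,\pi^m_{i_n})\big)(a).
\end{align*}
Hence $\delta(f\circ(\pi^m_{i_1},\dots,\pi^m_{i_n}))=\delta(f)\circ(\pi^m_{i_1},\dots,\pi^m_{i_n})$. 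The same identity shows that every minor of some $f^*\in\clone{X}^*$ equals $g^*$ for a minor $g\in\clone{X}$, which gives the closure of $\clone{X}^*$ under minors.

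There is no real obstacle here. The only point needing care is the bookkeeping of the codomain, namely saying explicitly that the homomorphism lands in $\clone{X}^*$ and that $\clone{X}^*$ is a minion. In the multisorted case I would apply the same computation componentwise, defining $(f_1,\dots,f_k)^*=(f_1^*,\dots,f_k^*)$.
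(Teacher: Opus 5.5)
Your proposal is correct and is essentially the paper's proof. The paper only says the claim follows directly from the definitions of a minion homomorphism and of the dual, and your computation $g^*(a)=\overline{f(\bar a_{i_1},\dots,\bar a_{i_n})}=\big(f^*\circ(\pi^m_{i_1},\dots,\pi^m_{i_n})\big)(a)$ is exactly that verification written out, with the codomain made explicit.
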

\begin{proof}
As in the proof of Lemma~\ref{binary-minion-homomorphism}, this follows directly from the definitions of a minion homomorphism and of the dual function.
\end{proof}

\begin{lemma}
\label{bi-mensi}
  Let $b_i\in \Bc_i$, and let $f$ be any function of the same arity, then $b_i\wedge f \in \Bc_i$. 
\end{lemma}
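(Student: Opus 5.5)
The plan is to use the characterization of $\Bc_i$ recorded after the definitions of the specific clones. A Boolean function $g$ lies in $\Bc_i$ if and only if it is idempotent and, for every subset $S\subseteq g^{-1}(1)$ with $|S|\le i$, there is a coordinate in which all elements of $S$ equal $1$. The second condition is just a restatement of compatibility with $\relation{R}_i$. I will verify the two parts separately for $g=b_i\wedge f$.

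\emph{Compatibility with $\relation{R}_i$.} The key observation is monotonicity of the preimage of $1$: $(b_i\wedge f)^{-1}(1)=b_i^{-1}(1)\cap f^{-1}(1)\subseteq b_i^{-1}(1)$. Hence any $S\subseteq (b_i\wedge f)^{-1}(1)$ with $|S|\le i$ is also a subset of $b_i^{-1}(1)$. Since $b_i\in\Bc_i$, the elements of $S$ share a coordinate equal to $1$. So the characterization passes directly to $b_i\wedge f$, and this part needs no computation.

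\emph{Idempotency.} On the all-zero tuple, $(b_i\wedge f)(0\cdots0)=b_i(0\cdots0)\wedge f(0\cdots0)=0$, because $b_i$ is idempotent. On the all-one tuple, $(b_i\wedge f)(1\cdots1)=f(1\cdots1)$. I expect this to be the one delicate point. Taken literally, the statement needs $f(1\cdots1)=1$: if $f$ is the constant $0$, then $b_i\wedge f$ is constant and not idempotent. I would therefore read the lemma, as it is meant to be applied later, with $f$ idempotent, or at least with $f(1\cdots1)=1$. For example, $f$ may be a member of another idempotent clone, or a function obtained through minion homomorphisms between idempotent clones, which preserve the unary minor by Lemma~\ref{projtoproj}. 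Under that proviso, $b_i\wedge f$ is idempotent.

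Combining the two parts gives $b_i\wedge f\in\Bc_i$. If one prefers the weaker literal reading, the compatibility argument alone shows $b_i\wedge f\in\Pol(\relation{R}_i)$ for arbitrary $f$, and idempotency is exactly the extra hypothesis $f(1\cdots1)=1$.
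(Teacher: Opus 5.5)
Your argument is correct and is essentially the paper's: its whole proof is the inclusion $(b_i\wedge f)^{-1}(1)\subseteq b_i^{-1}(1)$, from which compatibility with $\relation{R}_i$ follows exactly as in your first part. Your idempotency caveat is also right, and the paper's proof glosses over it because its listed characterization of $\Bc_i$ omits idempotency: for $f\equiv 0$ the function $b_i\wedge f$ is constant and hence not in $\Bc_i=\PolId(\relation{R}_i)$, so the lemma needs $f(1,\dots,1)=1$, which holds in every later application since there $f$ is always a meet of idempotent functions.
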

\begin{proof}
Since $(b_i\wedge f)^{-1}(1)\subseteq b_i^{-1}(1),$, it follows immediately from the definition of $\Bc_i$ that $b_i\wedge f\in\Bc_i.$
\end{proof}

\begin{lemma}
\label{bi-less-mensi}
  Let $b_i\in \Bc_i^\leq$, and let $f$ be any monotone function of the same arity. Then $b_i\wedge f \in \Bc_i^\leq$. 
\end{lemma}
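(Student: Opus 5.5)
We need to show that $b_i\wedge f$ is idempotent, compatible with $\relation{R}_i$, and compatible with $\leq$; these three conditions together mean exactly membership in $\Bc_i^\leq=\PolId(\relation{R}_i,\leq)$. The plan is to reduce the first two to Lemma~\ref{bi-mensi} and treat monotonicity separately.

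\emph{Step 1.} Since $\Bc_i^\leq\subseteq\Bc_i$, the function $b_i$ lies in $\Bc_i$. Lemma~\ref{bi-mensi} then gives $b_i\wedge f\in\Bc_i$, so $b_i\wedge f$ is idempotent and compatible with $\relation{R}_i$. This step needs some care. Lemma~\ref{bi-mensi} is stated for an arbitrary $f$, but idempotency of $b_i\wedge f$ requires $f(1,\dots,1)=1$. The containment $(b_i\wedge f)^{-1}(1)\subseteq b_i^{-1}(1)$ only handles compatibility with $\relation{R}_i$ (via the characterization of $\Bc_i$ by subsets of $f^{-1}(1)$) and the value $0$ at $(0,\dots,0)$. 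I would read the lemma, as the paper evidently intends, with $f$ idempotent, or simply assume $f(1,\dots,1)=1$. Then $(b_i\wedge f)(1,\dots,1)=1$ and $(b_i\wedge f)(0,\dots,0)=0$ because $b_i(0,\dots,0)=0$.

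\emph{Step 2: monotonicity.} Let $a\le a'$ componentwise. Since $b_i$ is compatible with $\leq$, we have $b_i(a)\le b_i(a')$, and since $f$ is monotone, $f(a)\le f(a')$. Because $\wedge$ is monotone in both arguments, $b_i(a)\wedge f(a)\le b_i(a')\wedge f(a')$. Hence $b_i\wedge f$ preserves $\leq$.

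Combining the two steps puts $b_i\wedge f$ in $\PolId(\relation{R}_i,\leq)=\Bc_i^\leq$. The only real obstacle is the idempotency caveat in Step 1, which I would resolve with the same implicit hypothesis used for Lemma~\ref{bi-mensi}. The remaining steps are immediate.
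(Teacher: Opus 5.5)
Your proof is correct and follows the paper's argument: you get $b_i\wedge f\in\Bc_i$ from Lemma~\ref{bi-mensi}, then observe that the pointwise conjunction of two monotone functions is monotone. Your idempotency caveat is also justified: as literally stated, the lemma fails for, e.g., the constant-$0$ function $f$, and the paper's appeal to Lemma~\ref{bi-mensi} tacitly needs the hypothesis $f(1,\dots,1)=1$ that you add.
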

\begin{proof}
As in the proof of Lemma~\ref{bi-mensi}, we have $(b_i\wedge f)\in \Bc_i.$ It therefore remains to show that $b_i\wedge f$ is monotone. Since both $b_i$ and $f$ are monotone, their pointwise conjunction $b_i\wedge f$ is also monotone. 
\end{proof}

\begin{lemma}
  \label{c-mensi}
Let $c\in \Cc$ and $a\in \Ac$, then $((c\vee a)\wedge a^*)\in \Cc$.
\end{lemma}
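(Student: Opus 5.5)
Since $\Cc=\PolId(\neq)$, membership of a function $g$ in $\Cc$ amounts to two conditions: $g$ is idempotent, and $g$ is self-dual, i.e.\ $g(\bar x)=\overline{g(x)}$ for all $x$ (the characterization of $\Cc$ listed above). Self-duality already forces idempotence, because $g(1\dots1)=\overline{g(0\dots0)}$ and idempotence is only needed on the two constant tuples. Even so, I will check idempotence directly to keep the argument transparent. So put $g=(c\vee a)\wedge a^*$; I will verify $g^*=g$.

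The main step is to compute the dual using De Morgan. Duality commutes with the Boolean connectives in the expected way: $(u\wedge v)^*=u^*\vee v^*$, $(u\vee v)^*=u^*\wedge v^*$, and $(u^*)^*=u$. Since $c^*=c$, this gives
$$g^*=(c^*\wedge a^*)\vee a^{**}=(c\wedge a^*)\vee a.$$
It then remains to show, pointwise at each input $x$, that
$$(c\vee a)\wedge a^*=(c\wedge a^*)\vee a.$$
Distributing the right-hand side gives $(c\vee a)\wedge(a^*\vee a)$. Hence the identity holds as soon as $a\le a^*$ pointwise, since then $a^*\vee a=a^*$.

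The only real point is to justify $a\le a^*$ for $a\in\Ac$. Every member of $\Ac$ is a meet $x_{i_1}\wedge\dots\wedge x_{i_r}$ of a nonempty set $S$ of variables. Its dual is the join of the same variables. Because $S$ is nonempty, the meet is at most the join. Explicitly: if $a(x)=1$, then all $x_i$ with $i\in S$ equal $1$, so $a^*(x)=1$.

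Finally, idempotence. At the all-zero input, $a=a^*=c=0$, so $g=0$. At the all-one input, $a=a^*=c=1$, so $g=1$. Therefore $g$ is idempotent and self-dual, so $g$ preserves $\neq$, and $g\in\Cc$.

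I expect no real obstacle. The only step that uses anything specific is the inequality $a\le a^*$, and that relies on $a$ being a nonempty meet of variables.
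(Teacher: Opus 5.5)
Your proof is correct, but it is organized differently from the paper's. The paper fixes an input $x$ and assumes without loss of generality that $c(x)=1$ and $c(\bar x)=0$. It then checks the three cases $(a(x),a(\bar x))\in\{(1,0),(0,1),(0,0)\}$ and sees that $g(x)\neq g(\bar x)$ in each. You instead prove the identity $g^*=g$ symbolically, using De Morgan and distributivity.

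Both arguments rest on the same fact. Your inequality $a\le a^*$ says exactly that $a(x)$ and $a(\bar x)$ are never both $1$. That is why the paper's case list can leave out $(1,1)$, which it does without comment; you justify it explicitly from $a$ being a nonempty meet of variables. Your version also checks that $g$ is idempotent. This is needed because $\Cc=\PolId(\neq)$, and the paper's proof leaves it implicit. Your argument also makes clear that the lemma holds for any idempotent $a$ with $a\le a^*$. The paper's case check, in turn, is fully elementary and describes $g$ concretely: on each pair $\{x,\bar x\}$, $g$ coincides with $a$ if $a$ takes the value $1$ there, and with $c$ if $a$ vanishes on both.

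One sentence in your write-up is false and should be deleted: self-duality does not force idempotence. It only gives $g(1\dots1)=\overline{g(0\dots0)}$, which also allows $g(0\dots0)=1$ and $g(1\dots1)=0$. For example, negation $x\mapsto\bar x$ is self-dual but not idempotent. Since you verify idempotence directly afterwards, the proof itself is unaffected.
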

\begin{proof}
  Let $n$ the common arity of $c$ and $a$, and let $x\in \{0,1\}^n$. Since $c\in\Cc$, exactly one of $c(x)$ or $c(\bar{x})$ is equal to $1$. Without loss of generality, assume that $c(x)=1$ and therefore $c(\bar{x})=0$. We distinguish the following cases.
  \begin{itemize}
  \item If $a(x) = 1$ and $a(\bar{x}) = 0$, then $((c\vee a)\wedge a^*) (x) = 1$ and $((c\vee a)\wedge a^*) (\bar{x}) = 0$
  \item If $a(x) = 0$ and $a(\bar{x}) = 1$, then $((c\vee a)\wedge a^*) (x) = 0$ and $((c\vee a)\wedge a^*) (\bar{x}) = 1$
  \item If $a(x) = 0$ and $a(\bar{x}) = 0$, then $((c\vee a)\wedge a^*) (x) = c(x)$ and $((c\vee a)\wedge a^*) (\bar{x}) = c(\bar{x})$
  \end{itemize}
  Hence $((c\vee a)\wedge a^*)$ preserves the relation $\neq$, and therefore $(c\vee a)\wedge a^* \in \Cc$.
\end{proof}

Our strategy for proving that the core of a product clone $\clone{P}$ is a minion $\minion{G}$ is as follows. To establish the existence of a minion homomorphism $\clone{P} \to \minion{G}$, we provide an explicit formula. The preceding lemmas guarantee that this formula indeed defines a minion homomorphism. In every case, the homomorphism $\minion{G} \to \clone{P}$ is simply the inclusion map.

It therefore remains to prove that $\minion{G}$ is a minion core. To this end, we apply Lemma~\ref{proof-minion}. By that lemma, it suffices to show that every binary function in $\minion{G}$ belongs to the image of every minion endomorphism. 

For each case, we consider a particular set of height-one identities $(*)$ involving a  function symbol $f$. We then establish two facts. First, any function satisfying $(*)$ has, among its minors, all non-projection binary functions of $\minion{G}$. Second, there exists a function $f\in\minion{G}$ satisfying $(*)$.

Now let $\alpha$ be a minion endomorphism of $\minion{G}$. Since minion homomorphisms preserve height-one identities, the function $\alpha(f)$ also satisfies $(*)$. Therefore, the minors of $\alpha(f)$ contain all non-projection binary functions of $\minion{G}$. Since, by Lemma \ref{projtoproj}, projections are also in the image of $\alpha$,
it follows that every binary function of $\minion{G}$ belongs to the image of $\alpha$.

Hence every binary function in $\minion{G}$ lies in the image of every minion endomorphism of $\minion{G}$. By Lemma~\ref{proof-minion}, we conclude that $\minion{G}$ is a minion core.

\begin{lemma}\label{lcleqbjbi} $\C{\Lc,\Cc^\leq,\Bc_j^\leq,\Bc_i}$, where $2<j<i$, has core $\minion{G}$ consisting of all quadruples $(\ell,c,b_j,b_i)$ satisfying $b_i \leq b_j \leq c, b_i\leq \ell$. 
\end{lemma}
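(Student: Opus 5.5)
The proof has three parts: $\minion{G}$ is a minion, $\minion{G}\sim\C{\Lc,\Cc^\leq,\Bc_j^\leq,\Bc_i}$, and $\minion{G}$ is a minion core. I follow the strategy described just before the statement.

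\textbf{$\minion{G}$ is a minion.} Each component clone is closed under minors. Pointwise inequalities between functions of the same arity are preserved by taking the same minor of both. Hence the set of quadruples with $b_i\le b_j\le c$ and $b_i\le \ell$ is closed under minors.

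\textbf{Equivalence with the product.} The inclusion $\minion{G}\to\C{\Lc,\Cc^\leq,\Bc_j^\leq,\Bc_i}$ is a minion homomorphism. For the other direction I send $(\ell,c,b_j,b_i)$ to
$$(\ell,\ c,\ b_j\wedge c,\ b_i\wedge b_j\wedge c\wedge \ell).$$
Each new coordinate is a Boolean combination of the coordinate projections of the product, and these projections are minion homomorphisms. So Lemma~\ref{binary-minion-homomorphism}, applied twice to the relevant sorts, shows that each coordinate, and hence the whole map, is a minion homomorphism. Membership in the right clones follows from earlier lemmas:
\begin{itemize}
\item $b_j\wedge c\in\Bc_j^\le$ by Lemma~\ref{bi-less-mensi}, since $c$ is monotone;
\item $b_i\wedge b_j\wedge c\wedge\ell\in\Bc_i$ by Lemma~\ref{bi-mensi};
\item both new coordinates remain idempotent.
\end{itemize}
The inequalities $b_i'\le b_j'\le c$ and $b_i'\le \ell$ hold by construction, so the image lies in $\minion{G}$.

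\textbf{Core: listing the binary members.} I use Lemma~\ref{proof-minion} together with the identity-based scheme already described, so I first list the binary members of $\minion{G}$.
\begin{itemize}
\item A binary idempotent linear function, or a binary monotone self-dual idempotent function, is a projection.
\item A binary member of $\Bc_j^\le$ or $\Bc_i$ is $x$, $y$ or $x\wedge y$. The function $x\vee y$ is excluded because $10$ and $01$ share no common coordinate equal to $1$.
\end{itemize}
So the non-projection binary quadruples are $(\ell,c,b_j,b_i)$ with $\ell,c\in\{x,y\}$ and $b_i\le b_j\le c$, $b_i\le\ell$, not all coordinates equal. Examples are $(x,x,x,x\wedge y)$, $(x,x,x\wedge y,x\wedge y)$, $(x,y,x\wedge y,x\wedge y)$ and their $x\leftrightarrow y$ swaps.

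\textbf{Core: the identities $(*)$.} For $(*)$ I take, for a suitably large odd arity $n$, the identities saying that $f$ is $k$-pseudosymmetric for every $k$. The witness in $\minion{G}$ is the quadruple
$$\bigl(\text{sum},\ \text{majority},\ \atleast{b_1},\ \atleast{b_2}\bigr),\qquad b_1=\beta(n,j)+1,\ b_2=\max(b_1,\beta(n,i)+1).$$
Its coordinates lie in the right clones by the last lemma of the previous subsection. The required inequalities hold because $\atleast{b}\le$ majority once $b>n/2$, and because $\atleast{b_2}\le$ sum holds if $b_2$ is close enough to $n$. This last condition may force adding the all-ones-at-$n$ case only, or choosing $n$ so that $b_2=n$.

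For an arbitrary solution $(\ell,c,b_j,b_i)$ of $(*)$:
\begin{itemize}
\item the pseudosymmetry lemmas force $\ell$ to be the sum and $c$ the majority;
\item the $\beta$-lemma forces $b_j(1_k)=0$ for $k\le\beta(n,j)$ and $b_i(1_k)=0$ for $k\le\beta(n,i)$.
\end{itemize}
The binary minors $f(x_k)$ then have $\ell$-coordinate given by the parity of $k$ and $c$-coordinate given by whether $k>n/2$. Choosing $n$ so that $\beta(n,j)<\beta(n,i)$ and the thresholds $n/2$, $\beta(n,j)$, $\beta(n,i)$ are separated by values of both parities, I read off each required quadruple from a suitable $k$.

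\textbf{Main obstacle.} The $\beta$-lemma only forces zeros; it does not force the values $b_j(1_k)=1$ or $b_i(1_k)=1$ needed to obtain coordinates equal to $x$ rather than $x\wedge y$. I would first try to get these ones from the inequalities $b_i\le b_j\le c$ together with extra height-one identities. One option is identities of the form $f(x_k)=f(x_{k'})$ across several $k$, which pin $b_j$ and $b_i$ to full threshold functions. If that fails, I would replace single pseudosymmetric $f$ by several function symbols linked as in Lemma~\ref{A}.
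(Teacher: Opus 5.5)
Your equivalence part (the inclusion one way, and $(\ell,c,b_j,b_i)\mapsto(\ell,c,b_j\wedge c,b_i\wedge b_j\wedge c\wedge\ell)$ the other way) is the paper's, and your list of binary members of $\minion{G}$ is correct. The gap is in the core part, and it is exactly the obstacle you flag but leave open.

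\textbf{Why full symmetry cannot work.} Let $\mu$ be the $n$-ary meet, and consider $(\ell,c,\mu,\mu)$ with $\ell$ the sum and $c$ the majority of all $n$ variables. It lies in $\minion{G}$ and satisfies your $(*)$. But every binary minor of it has $b_j$-coordinate $x\wedge y$ unless the whole minor is a projection. So $(x,x,x,x\wedge y)$ and $(y,x,x,x\wedge y)$ never occur among its minors. Hence your key claim, that every solution of $(*)$ has all non-projection binary members of $\minion{G}$ among its minors, is false. An endomorphism could a priori miss these two quadruples.

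Your first proposed repair fails for the same reason. Take any identity $f(u)=f(v)$ in which both $u$ and $v$ contain both $x$ and $y$; in particular, this covers $f(x_k)=f(x_{k'})$ with $0<k,k'<n$. Such an identity still holds after replacing the last two coordinates of a solution by $\mu$. So identities of this kind can only ever force zeros, never the ones you need.

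\textbf{The missing idea.} You need an anchoring identity that pins a value, combined with monotonicity of $b_j$ and a transfer identity. The paper sets this up as follows:
\begin{itemize}
\item $n\equiv1\pmod 4$ is large, and $b$ is even with $\beta(n,j)<b<\beta(n,i)-1$;
\item $f$ is required only to be $(\lceil n/2\rceil,n-1,b,b+1)$-pseudosymmetric;
\item the anchor is $f(x_{n-2})=x$, and the transfer identity is $f(x_{n-1})=f(x_b)$.
\end{itemize}
By idempotency, $f(x_{n-2})=x$ is just $f(x_{n-2})=f(x_n)$. This is precisely the boundary case $k'=n$ that your suggestion excluded.

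The chain of deductions is:
\begin{itemize}
\item the anchor gives $b_j(1_{n-2})=1$, and monotonicity gives $b_j(1_{n-1})=1$;
\item the transfer identity then gives $b_j(1_b)=1$;
\item $b_j(0_b)=b_j(0_{n-1})=0$, by $(n-1)$-pseudosymmetry (equivalently $1$-pseudosymmetry) and the $\beta$-lemma;
\item monotonicity now gives $b_j(x_b)=b_j(x_{b+1})=x$;
\item $b$- and $(b+1)$-pseudosymmetry with the $\beta$-lemma keep $b_i$ equal to $x\wedge y$ at these levels.
\end{itemize}
Hence $f(x_{b+1})=(x,x,x,x\wedge y)$ and $f(x_b)=(y,x,x,x\wedge y)$, the two quadruples your approach cannot reach. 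The remaining two are read off at levels $\lceil n/2\rceil$ and $\lceil n/2\rceil+1$, as you planned.

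Since $f$ is no longer fully symmetric, the witness changes too: $b_j=\atleast{b}$, and $b_i$ is $1$ exactly at $1_{n-2}$ and $1_n$. Keeping $b_i$ nonzero on only this one non-constant tuple is also what lets the argument cover $i=\infty$. There, a fully symmetric member of $\Bc_\infty$ would have to be the meet.
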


\begin{proof}

A homomorphism from the product to $\minion{G}$ is given by $(\ell,c,b_j,b_i)\mapsto (\ell, c, b_j\wedge c, b_i\wedge b_j\wedge c\wedge \ell)$, which is a minion homomorphism by Lemmas \ref{binary-minion-homomorphism}, \ref{bi-mensi}, and \ref{bi-less-mensi}.

Let $n\equiv 1\pmod 4$ be sufficiently large so that $\beta(n,i) - \beta(n,j) > 3$ and $\beta(n,j)-\lceil\frac{n}{2}\rceil > 1$. Choose an even integer $b$ satisfying $ \beta(n,j) < b <\beta(n,i)-1$.

Consider $n$-ary functions $f = (\ell,c,b_j,b_i)\in\minion{G}$ satisfying the following identities.
\begin{enumerate}
    \item $f$ is $(\lceil n/2\rceil, n-1, b, b+1)$-pseudosymmetric.
    \item $f(x_{n-2}) = x$.
    \item $f(x_{n-1}) = f(x_{b})$.
\end{enumerate}

From $f$ being $(\lceil n/2\rceil)$-pseudosymmetric, we obtain that $\ell$ is the sum of all variables and $c$ is the majority on all variables. From (2) we obtain $b_i(1_{n-2}) = b_j(1_{n-2}) = 1$. By monotonicity $b_j(1_{n-1}) = 1$, and therefore, using (3), $b_j(1_b)=1$. Applying monotonicity once more yields $b_j(1_{b+1}) = 1$. Since $b+1 < \beta(n,i)$, we have $b_i(1_{b+1}) = b_i(1_{b}) = 0$.

The known values are summarized in the following table.

\begin{center}
\begin{tabular}{ |cccc|l } 
 \hline
 $\ell$ & $c$ & $b_j$ & $b_i$ & number of ones \\ 
  \hline
     $1$ & $1$ & $1$ & $1$ & $n$ \\ 
     $0$ & $1$ & $?$ & $?$ & $n-1$, $b_j(1_{n-1}) = 1$ \\ 
     $1$ & $1$ & $?$ & $?$ & $n-2$, $b_i(1_{n-2}) = b_j(1_{n-2}) = 1$ \\
\multicolumn{4}{|c|}{$\cdots$} & \color{gray}$\longleftarrow\beta(n,i)$ \\
     $1$ & $1$ & $1$ & $0$ &  $b+1$ \\
     $0$ & $1$ & $1$ & $0$ &  $b$ \\
\multicolumn{4}{|c|}{$\cdots$}  & \color{gray}$\longleftarrow\beta(n,j)$\\
     $0$ & $1$ & $0$ & $0$ & $\lceil\frac{n}{2}\rceil +1$ \\
     $1$ & $1$ & $0$ & $0$ & $\lceil\frac{n}{2}\rceil$ \\
     $0$ & $0$ & $0$ & $0$ & $\lfloor\frac{n}{2}\rfloor$ \\
\multicolumn{4}{|c|}{$\cdots$} \\
     $0$ & $0$ & $0$ & $0$ & even \\
     $1$ & $0$ & $0$ & $0$ & odd \\
\multicolumn{4}{|c|}{$\cdots$} \\
     $0$ & $0$ & $0$ & $0$ & $0$\\
 \hline
\end{tabular}
\end{center}

\begin{note}
If $i=\infty$, then the only change is that $\beta(n,i) = n-1$; the function values are otherwise unchanged.
\end{note}

Using the table, we obtain  
\begin{align*}
f(x_{b+1}) &= (xxx{\wedge}) \\
f(x_b) &= (yxx{\wedge}) \\
f(x_{\lceil n/2 \rceil +1}) &= (yx{\wedge}{\wedge}) \\
f(x_{\lceil n/2 \rceil}) &= (xx{\wedge}{\wedge})
\end{align*}
The right-hand sides include all non-projection binary functions in $\minion{G}$ up to swapping $x$ and $y$.
(Note that the other binary functions are are also minors, e.g., $f(y_b) = (xyy{\wedge})$).

Finally, there exists $f\in\C{\Lc,\Cc^\leq,\Bc_j^\leq,\Bc_i}\in\minion{G}$ satisfying these identities; for example $f=(\ell,c,b_j,b_i)$, where
\begin{itemize}
\item $\ell$ is the sum of all variables.
\item $c$ is majority on all variables.
\item $b_j$ is $\atleast{b}$.
\item $b_i$ is such that $b_i(1_{n-2}) = b_i(1_n) = 1$ and zero everywhere
else.
\end{itemize}
\end{proof}

    Using exactly the same identities as in the previous lemma, we obtain the following corollary.

    \begin{corollary}
      \label{lcleqbjbicor}
\hfill
    \begin{itemize}
        \item $\C{\Lc}$ is a core.
        \item $\C{\Cc^\leq}$ is a core.
        \item $\C{\Bc_j^\leq}$, where $2<j<\infty$, is a core.
        \item $\C{\Bc_i}$, where $3<i$, is a core.
        \item $\C{\Lc,\Cc^\leq}$ is a core.
        \item $\C{\Lc,\Bc_j^\leq}$, where $2<j<\infty$, is a core.
        \item $\C{\Lc, \Bc_i}$, where $3<i$, has core $\minion{G}$ consisting of all pairs $(\ell,b_i)$  satisfying $b_i\leq \ell$. 
        \item $\C{\Cc^\leq,\Bc_j^\leq}$, where $2<j<\infty$, has core $\minion{G}$ consisting of all pairs of functions $c, b_j$ satisfying $b_j\leq c$.
        \item $\C{\Cc^\leq, \Bc_i}$, where $3<i$, has core $\minion{G}$ consisting of all pairs of functions $c, b_i$ satisfying $b_i\leq c$.
        \item $\C{\Bc_j^\leq, \Bc_i}$, where $2<j<i$, has core $\minion{G}$ consisting of all pairs of functions $b_j, b_i$ satisfying $b_i\leq b_j$.
        \item $\C{\Lc,\Cc^\leq,\Bc_j^\leq}$, where $2<j<\infty$, has core $\minion{G}$ consisting of all triples of functions $\ell,c, b_j$ satisfying $b_j\leq c$.
        \item $\C{\Lc,\Cc^\leq,\Bc_i}$, where $3<i$, has core $\minion{G}$ consisting of all triples of functions $\ell,c, b_i$ satisfying $b_i\leq c, b_i\leq \ell$.
        \item $\C{\Lc,\Bc_j^\leq,\Bc_i}$, where $2<j<i$, has core $\minion{G}$ consisting of all triples of functions $\ell,b_j, b_i$ satisfying $b_i\leq b_j, b_i\leq \ell$.
        \item $\C{\Cc^\leq, \Bc_j^\leq,\Bc_i}$, where $2<j<i$, has core $\minion{G}$ consisting of all triples of functions $c,b_j, b_i$ satisfying $b_i\leq b_j\leq c$.
    \end{itemize}
\end{corollary}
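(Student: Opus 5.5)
The corollary is obtained by restricting the argument of Lemma~\ref{lcleqbjbi} to a subset $S\subseteq\{\Lc,\Cc^\leq,\Bc_j^\leq,\Bc_i\}$ of the factors. For each item I would follow the same two-step strategy as in that lemma.

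\emph{Step 1: the homomorphism into $\minion{G}$.} Let $\minion{G}$ be the claimed core: the tuples of the product that satisfy the inequalities inherited from $b_i\le b_j\le c$, $b_i\le\ell$ among the factors actually present. The map from the product to $\minion{G}$ is the corresponding truncation of the map in Lemma~\ref{lcleqbjbi}: each factor is replaced by its meet with every factor that lies above it in the stated inequalities. Examples:
\begin{itemize}
\item for $\C{\Lc,\Bc_i}$ the map is $(\ell,b_i)\mapsto(\ell,b_i\wedge\ell)$;
\item for $\C{\Bc_j^\leq,\Bc_i}$ it is $(b_j,b_i)\mapsto(b_j,b_i\wedge b_j)$.
\end{itemize}
These are minion homomorphisms by Lemmas~\ref{binary-minion-homomorphism}, \ref{bi-mensi} and~\ref{bi-less-mensi}. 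Lemma~\ref{bi-less-mensi} needs a monotone second argument, which holds since $c$ is monotone and $b_j$ is monotone. No inequality ever bounds $\ell$, $c$ or $b_j$ by $\ell$, so monotonicity is never needed for $\ell$. When no inequality is present (a single factor, or $\C{\Lc,\Cc^\leq}$, $\C{\Lc,\Bc_j^\leq}$), the map is the identity and $\minion{G}$ is the product itself. The inclusion $\minion{G}\to$ product gives the reverse direction.

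\emph{Step 2: $\minion{G}$ is a minion core.} I apply Lemma~\ref{proof-minion} with the identities of Lemma~\ref{lcleqbjbi}, keeping only the pseudosymmetry conditions relevant to the sorts present, together with conditions (2) and (3), on the same $n$ and even $b$.
\begin{itemize}
\item \emph{Forcing the table.} Each present column of the table is forced by exactly the same deductions as before. The $\ell$ and $c$ columns come from $\lceil n/2\rceil$-pseudosymmetry. The $b_j$ and $b_i$ columns come from (2), (3), monotonicity of $b_j$, and the lemma on $\beta(n,k)$.
\item \emph{Where care is needed.} The deductions for $b_j$ in the table used only identities on the $b_j$ sort, so they survive deleting sorts. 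Condition (2) is per-sort and trivially holds for whichever sorts remain. I must check that the table values of $b_i$ do not rely on $b_i\le b_j$ when $\Bc_j^\leq$ is absent. They do not: $b_i(1_{n-2})=1$ comes from (2), and $b_i(1_b)=b_i(1_{b+1})=0$ come from $\beta(n,i)$. For the single sort $\Bc_i$ with $i>3$, I still need a value of $b$ with $b,b+1\le\beta(n,i)$ distinguishing rows, which is fine for large $n$. The hypothesis $i>3$ presumably ensures that an appropriate ``$j$'' slot exists or that $\Bc_i\ne\Bc_3$-type degeneracy is avoided; I would just verify that the rows $b$, $b+1$, $\lceil n/2\rceil$, $\lceil n/2\rceil+1$ still exist.
\item \emph{Reading off the binary minors.} From the projected table, the four minors $f(x_{b+1})$, $f(x_b)$, $f(x_{\lceil n/2\rceil+1})$, $f(x_{\lceil n/2\rceil})$, and their $x\leftrightarrow y$ swaps, must cover every non-projection binary tuple of $\minion{G}$. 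Projections are covered by Lemma~\ref{projtoproj}.
\item \emph{Existence of a witness.} The witness is the projection of the explicit $f$ from Lemma~\ref{lcleqbjbi}. It still lies in $\minion{G}$ because every inequality of $\minion{G}$ is among those satisfied by that $f$.
\end{itemize}

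\emph{Expected main obstacle.} The hard part is the enumeration of binary members of each $\minion{G}$. Binary idempotent functions in $\Lc$ are only projections. In $\Cc^\leq$ they are only projections. In $\Bc_j^\leq$ and $\Bc_i$ they are projections and $\wedge$ (and $\vee$ is excluded). With the inequalities, the non-projection binary tuples are those mixing $x,y,\wedge$ subject to the constraints, and I would check item by item that each appears among the four listed rows up to swapping $x,y$. Deleting sorts only shortens the tuples, so coverage is inherited from the full case: any binary tuple of a smaller $\minion{G}$ extends to one of the full $\minion{G}$ by setting the missing coordinates to $\wedge$ or a matching projection. I would verify this extension step explicitly.
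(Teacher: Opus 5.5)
Your plan is the paper's own argument: the paper proves this corollary by reusing the identities of Lemma~\ref{lcleqbjbi} on the sorts present. It uses the truncated meet map into $\minion{G}$ and the inclusion back. Your Step~1 and your witness argument are fine. The step that does not hold as stated is the claim that every present column of the table is ``forced by exactly the same deductions.'' In row $\lceil n/2\rceil+1$, the entry $b_j(1_{\lceil n/2\rceil+1})=0$ has no deduction at all. Identity (1) makes $b_j$ pseudosymmetric only at the weights $\lceil n/2\rceil,\lfloor n/2\rfloor,1,n-1,b,b+1,n-b,n-b-1$, so the $\beta(n,k)$ lemma is unavailable there, and neither monotonicity nor $b_j\le c$ helps. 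The paper's table also asserts this entry without justification.

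The entry can genuinely fail. Take $b$ near $\beta(n,i)$ for a large auxiliary $i$, so that $(j-1)(n-b)<\lceil n/2\rceil+1$. Let $b_j$ be given by $b_j^{-1}(1)=\UP(\{t:|t|=b\}\cup\{1_{\lceil n/2\rceil+1}\})$. This $b_j$ lies in $\Bc_j^\leq$ and is below the majority. Together with the sum and the majority it satisfies (1)--(3), yet $f(x_{\lceil n/2\rceil+1})=(y,x,x)$.

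Checking the items one by one, this matters exactly once, for $\C{\Lc,\Cc^\leq,\Bc_j^\leq}$. There the non-projection binary members, up to swapping $x,y$, are $(y,x,x)$, $(x,x,\wedge)$ and $(y,x,\wedge)$. None of your other minors produces $(y,x,\wedge)$: its only extension to the full $\minion{G}$ is $(y,x,\wedge,\wedge)$, which is the problematic row, so your inheritance argument breaks exactly here. In all other items, the minors you need come from rows $b$, $b+1$ and $\lceil n/2\rceil$. Those entries are forced by pseudosymmetry, (2), (3) and monotonicity alone. Some other full-table entries are forced only through inequalities with absent sorts (e.g.\ $b_i$ in row $\lceil n/2\rceil+1$ via $b_i\le\ell$), but they are never needed.

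The repair is short, and there are two ways to do it.
\begin{itemize}
\item Add $\lceil n/2\rceil+1$ to the pseudosymmetry list in (1). The explicit witness (sum, majority, $\atleast{b}$, and the $b_i$ of the lemma) still satisfies it. The $\beta(n,j)$ lemma then gives the entry, since $\lceil n/2\rceil+1<\beta(n,j)$.
\item Keep the identities, and use the cyclic construction in the proof of the $\beta(n,k)$ lemma. It shows that $b_j$ cannot equal $1$ on all tuples of weight $\lceil n/2\rceil+1\le\beta(n,j)$. Pick such a tuple $t$ with $b_j(t)=0$. The binary minor placing $x$ on the support of $t$ is $(y,x,\wedge)$, because $|t|$ is even and exceeds $n/2$, and $b_j(\bar t)\le c(\bar t)=0$.
\end{itemize}
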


\begin{lemma}\label{lcleqbleqinf} $\C{\Lc,\Cc^\leq,\Bc_\infty^\leq}$, has core $\minion{G}$ consisting of all triples of functions $(\ell,c,b_\infty)$ satisfying $b_\infty \leq c$.
\end{lemma}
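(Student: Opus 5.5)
The plan is to follow the scheme used for Lemma~\ref{lcleqbjbi}: build a homomorphism from the product onto $\minion{G}$, use the inclusion $\minion{G}\to\C{\Lc,\Cc^\leq,\Bc_\infty^\leq}$ for the other direction, and prove that $\minion{G}$ is a minion core via Lemma~\ref{proof-minion}.

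\textbf{The homomorphism.} I would take
$$(\ell,c,b_\infty)\mapsto(\ell,\,c,\,b_\infty\wedge c).$$
By Lemma~\ref{binary-minion-homomorphism}, this is a minion homomorphism as long as the image lies in $\minion{G}$. Membership holds because of the following facts.
\begin{itemize}
\item $(b_\infty\wedge c)^{-1}(1)\subseteq b_\infty^{-1}(1)$, so the characterization of $\Bc_\infty$ still holds.
\item $b_\infty\wedge c$ is monotone and idempotent, exactly as in Lemma~\ref{bi-less-mensi}. Hence it lies in $\Bc_\infty^\leq$.
\item $b_\infty\wedge c\le c$ holds trivially.
\end{itemize}
Note that no relation to $\ell$ is imposed, and none could be, since $\ell$ is not monotone.

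\textbf{The binary members of $\minion{G}$.}
\begin{itemize}
\item A binary idempotent linear function is a projection.
\item A binary self-dual monotone idempotent function is a projection.
\item A binary member of $\Bc_\infty^\leq$ is one of $x$, $y$, $x\wedge y$. The function $x\vee y$ is excluded because its $1$-set $\{10,01,11\}$ has no common $1$-coordinate.
\end{itemize}
With the constraint $b_\infty\le c$, the non-projection binary triples of $\minion{G}$ are, up to swapping $x$ and $y$, exactly $(x,x,\wedge)$ and $(y,x,\wedge)$.

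\textbf{The identities $(*)$.} Let $n\equiv 1\pmod 4$ with $n\ge 5$, and put $k=\lceil n/2\rceil=(n+1)/2$, which is odd. The identities require $f=(\ell,c,b_\infty)$ to be both $k$-pseudosymmetric and $(k+1)$-pseudosymmetric. What they force:
\begin{itemize}
\item By the two lemmas on pseudosymmetric members of $\Lc$ and $\Cc^\leq$, $\ell$ is the sum of all variables and $c$ is the majority.
\item Suppose $b_\infty(1_m)=1$ for some pseudosymmetric level $m$ with $0<m<n$. Then $b_\infty$ equals $1$ on all tuples with exactly $m$ ones, and these tuples share no common $1$-coordinate. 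This contradicts the characterization of $\Bc_\infty$. Hence $b_\infty$ vanishes on all tuples with $m$ ones.
\item By the remark that $m$-pseudosymmetry is equivalent to $(n-m)$-pseudosymmetry, the same applies at level $n-m$.
\item Consequently $b_\infty(x_k)=x\wedge y$ and $b_\infty(x_{k+1})=x\wedge y$.
\end{itemize}
Reading off the minors: since $k$ is odd, $\ell(x_k)=x$, and since $k+1$ is even, $\ell(x_{k+1})=y$. Also $c(x_k)=c(x_{k+1})=x$, because majority picks whichever variable fills more than half the positions. Therefore
$$f(x_k)=(x,x,\wedge),\qquad f(x_{k+1})=(y,x,\wedge),$$
and the $y$-versions arise as $f(y_k)$ and $f(y_{k+1})$.

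\textbf{Existence and conclusion.} A witness in $\minion{G}$ is $f=(\text{sum},\ \text{majority},\ \text{conjunction of all variables})$. The conjunction of all variables is monotone, lies in $\Bc_\infty$, and lies below the majority. Any endomorphism $\alpha$ of $\minion{G}$ maps this $f$ to a function that again satisfies $(*)$, so all non-projection binary triples lie in the image of $\alpha$. Projections lie in the image by Lemma~\ref{projtoproj}, since $\minion{G}$ is idempotent. Lemma~\ref{proof-minion} then shows that $\minion{G}$ is a minion core.

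The main point needing care is the enumeration of the binary members of $\minion{G}$ together with the argument that pseudosymmetry forces $b_\infty$ to vanish on intermediate levels. Both are short.
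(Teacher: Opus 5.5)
Your homomorphism $(\ell,c,b_\infty)\mapsto(\ell,c,b_\infty\wedge c)$ agrees with the paper. So do your $\lceil n/2\rceil$- and $(\lceil n/2\rceil+1)$-pseudosymmetry identities and the resulting minors $(x,x,\wedge)$ and $(y,x,\wedge)$. However, your enumeration of the binary elements of $\minion{G}$ is incomplete, and this leaves a genuine gap.

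The only multisorted projections are $\pi_1^2=(x,x,x)$ and $\pi_2^2=(y,y,y)$, and these are all that Lemma~\ref{projtoproj} gives you. The triple $(y,x,x)$, that is $\ell=y$, $c=x$, $b_\infty=x$, also lies in $\minion{G}$, since $b_\infty=x\le x=c$, and it is not a projection; likewise $(x,y,y)$. So $\minion{G}$ has eight binary elements, and up to swapping $x$ and $y$ there are three non-projection types, not two. Your identities cannot produce this third type. Your own witness (sum, majority, conjunction of all variables) satisfies them, yet each of its binary minors other than the two projections has third component $x\wedge y$. Hence nothing in your argument places $(y,x,x)$ in the image of an arbitrary endomorphism, and the hypothesis of Lemma~\ref{proof-minion} is not established.

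The missing ingredient is an identity forcing $b_\infty$ to equal $1$ on a level strictly below $n$. The paper takes $n=9$, keeps $(5,6)$-pseudosymmetry, and adds $f(x_7)=x$, i.e.\ $f(xxxxxxx\,yy)=f(xxxxxxx\,xx)$.
\begin{itemize}
\item This forces $b_\infty(1_7)=1$, and hence $b_\infty(1_8)=1$ by monotonicity.
\item Since $\ell(x_8)=y$, $c(x_8)=x$, and $b_\infty$ vanishes on the tuple with a single one (because $b_\infty\le c$), you get $f(x_8)=(y,x,x)$.
\item The witness must change accordingly. The conjunction of all variables violates the new identity, so take $b_\infty$ to be the conjunction of the first $7$ variables.
\end{itemize}
The same fix works for any $n\equiv 1\pmod 4$ with $n\ge 9$, using the identity $f(x_{n-2})=x$ and the minor $f(x_{n-1})=(y,x,x)$. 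The bound $n\ge 9$ ensures that the pseudosymmetric levels $\lceil n/2\rceil$ and $\lceil n/2\rceil+1$ stay strictly below $n-2$, so the modified witness still satisfies them.
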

\begin{proof}
  A homomorphism from the product to $\minion{G}$ is given by $(\ell,c,b_\infty)\mapsto (\ell,c,b_{\infty}\wedge c)$, which is a minion homomorphism
by Lemmas \ref{binary-minion-homomorphism}, \ref{bi-mensi}, and \ref{bi-less-mensi}.

Consider $9$-ary functions $f = (\ell,c,b_\infty)$ satisfying identities
\begin{enumerate}
    \item $f$ is $(5,6)$-pseudosymmetric.
    \item $f(x_{7}) = x$.
\end{enumerate}

From $f$ being $(5)$-pseudosymmetric, we obtain that $\ell$ is the sum of all variables and $c$ is the majority on all variables. From (2) and the monotonicity of $b_\infty $we get $b_\infty(1_8) = 1$. From (1) and monotonicity, we have $b_\infty(0_k) = 0$ for $k\leq 6$.

\begin{center}
\begin{tabular}{ |ccc|l } 
 \hline
 $\ell$ & $c$ & $b_\infty$  & number of ones \\ 
  \hline
$1$ & $1$ & $1$ & $9$ \\
$0$ & $1$ & ? & $8$, $b_\infty(1_8) = 1$ \\
$1$ & $1$ & ? & $7$, $b_\infty(1_7) = 1$\\
$0$ & $1$ & $0$ & $6$ \\
$1$ & $1$ & $0$ & $5$ \\ 
$0$ & $0$ & $0$ & $4$ \\
$1$ & $0$ & $0$ & $3$ \\ 
$0$ & $0$ & $0$ & $2$ \\
$1$ & $0$ & $0$ & $1$ \\ 
$0$ & $0$ & $0$ & $0$ \\
 \hline
\end{tabular}
\end{center}

All non-projection binary functions in $\minion{G}$ are minors of $f$:
\begin{align*}
f(x_8) &= (yxx) \\
f(x_6) &= (yx{\wedge}) \\
f(x_{5}) &= (xx{\wedge}) \\
\end{align*}

Finally, there exists $f\in\C{\Lc,\Cc^\leq,\Bc_\infty^\leq}$ satisfying these identities; for example $f=(\ell,c,b_\infty)$, where
\begin{itemize}
\item $\ell$ is the sum of all variables.
\item $c$ is majority on all variables.
\item $b_\infty$ is such that $b_\infty(1_9)=b_\infty(1_8)=b_\infty(1_7)=1$ and zero everywhere else.
\end{itemize}
\end{proof}

Using exactly the same identities as in the previous lemma, we obtain
the following corollary.

\begin{corollary}\label{lcleqbleqinfcor}
  \hfill
    \begin{itemize}
        \item $\C{\Bc^\leq_\infty}$ is a core.
        \item $\C{\Lc,\Bc^\leq_\infty}$ is a core.
        \item $\C{\Cc^\leq,\Bc^\leq_\infty}$, has core $\minion{G}$ consisting of all pairs of functions $c, b_\infty$ satisfying $b_\infty\leq c$.
    \end{itemize}
\end{corollary}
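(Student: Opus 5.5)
The plan is to reuse, for each of the three items, the $9$-ary identities from the proof of Lemma~\ref{lcleqbleqinf}: $f$ is $(5,6)$-pseudosymmetric and $f(x_7)=x$. Each item is handled by projecting the corresponding witness and table onto the sorts that actually occur. The general strategy described before Lemma~\ref{lcleqbjbi} then applies unchanged. We give a minion homomorphism from the product to $\minion{G}$ (the reverse direction is inclusion) and check that every non-projection binary element of $\minion{G}$ is a minor of any $f\in\minion{G}$ satisfying the identities. We also check that some $f\in\minion{G}$ satisfies them. Lemma~\ref{projtoproj} and Lemma~\ref{proof-minion} then finish the argument.

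\textbf{Homomorphisms.} For $\C{\Bc^\leq_\infty}$ and $\C{\Lc,\Bc^\leq_\infty}$ we take $\minion{G}$ to be the product itself, with the identity map. For $\C{\Cc^\leq,\Bc^\leq_\infty}$ we use $(c,b_\infty)\mapsto(c,b_\infty\wedge c)$. This is a minion homomorphism by Lemmas~\ref{binary-minion-homomorphism} and~\ref{bi-less-mensi}, applied exactly as in Lemma~\ref{lcleqbleqinf}.

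\textbf{Identities and minors.} The deductions from the identities are sortwise, so the columns of the table in Lemma~\ref{lcleqbleqinf} remain valid after deleting sorts. In particular, $\ell$ is the sum of all variables, $c$ is the majority, $b_\infty(1_k)=1$ for $k\ge 7$, and $b_\infty(1_k)=0$ for $k\le 6$.

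Next we list the binary elements. A binary idempotent member of $\Bc_\infty$ is a projection or $x\wedge y$, since $x\vee y\notin\Bc_\infty$. The binary members of $\Lc$ and of $\Cc^\leq$ are only the projections. Hence the non-projection binary elements of $\minion{G}$ are the following, up to swapping $x$ and $y$:
\begin{itemize}
\item for $\C{\Bc^\leq_\infty}$: $\wedge$, obtained as $f(x_6)$;
\item for $\C{\Lc,\Bc^\leq_\infty}$: $(x,\wedge)=f(x_5)$, $(y,\wedge)=f(x_6)$, and the mixed projection pair $(y,x)=f(x_8)$;
\item for $\C{\Cc^\leq,\Bc^\leq_\infty}$ with $b_\infty\le c$: $(x,\wedge)=f(x_6)$ or $f(x_5)$. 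Here $(y,x)$ is excluded by the constraint $b_\infty\le c$.
\end{itemize}
The $y$-versions of these elements come from $y_k$ in place of $x_k$.

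\textbf{Witness.} The witness is the restriction of the one in Lemma~\ref{lcleqbleqinf}: $\ell$ is the sum of all variables, $c$ is the majority, and $b_\infty$ is $1$ exactly on inputs with at least $7$ ones. For the third item we must check that $b_\infty\le c$. This holds because $7>4$.

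The main obstacle is the completeness of the enumeration of binary elements of $\minion{G}$ in each case. In particular, it must be confirmed that mixed pairs such as $(x,y)$ in the $\Lc$ case really do arise as minors, and that no other binary elements have been overlooked. Once this enumeration is settled, the rest is a direct reading of the table.
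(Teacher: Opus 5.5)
Your approach is the paper's: reuse the $9$-ary identities of Lemma~\ref{lcleqbleqinf} and the map $(c,b_\infty)\mapsto(c,b_\infty\wedge c)$, then read the binary minors off the table. Your list of binary elements of $\minion{G}$ is complete, and the minors $f(x_5)$, $f(x_6)$, $f(x_8)$ together with their $y_k$ versions are correct. The step that fails is the \emph{witness}.

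The function that is $1$ exactly on inputs with at least $7$ ones is not in $\Bc_\infty$, so it is not in $\Bc^\leq_\infty$. Consider the five $9$-tuples whose zeros are at $\{1,2\},\{3,4\},\{5,6\},\{7,8\},\{9,1\}$. Each has seven ones, so each is sent to $1$, yet no coordinate is $1$ in all five. Hence compatibility with $\relation{R}_5$ fails. Equivalently, $\beta(9,5)=7$, so the $\beta$-lemma forces every $7$-pseudosymmetric member of $\Bc_5$ to vanish on $1_7$, contradicting $f(x_7)=x$. So you have not shown that some element of $\minion{G}$ satisfies the identities. That is exactly what the argument needs, since the identities are pushed through an arbitrary endomorphism of $\minion{G}$ applied to that element.

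The source of the error is notational. In the paper, $1_7$ is the single tuple $1111111\,00$, not an arbitrary tuple with seven ones, so the witness in Lemma~\ref{lcleqbleqinf} is not symmetric. Read literally it is not even monotone, since $1111111\,01$ is sent to $0$; it should be read as the upward closure of $1_7$.

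The fix is to take $b_\infty(x)=x_1\wedge\cdots\wedge x_7$:
\begin{itemize}
\item it is idempotent and monotone, and every element of $b_\infty^{-1}(1)$ has a $1$ in the first coordinate, so $b_\infty\in\Bc^\leq_\infty$;
\item it vanishes on all inputs with at most six ones, so it is $(5,6)$-pseudosymmetric;
\item $b_\infty(1_7)=1$ and $b_\infty(0_7)=0$, so $f(x_7)=x$ holds;
\item it lies below the majority, so the pair is in $\minion{G}$ for the third item.
\end{itemize}
With this witness your argument goes through for all three items.
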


\begin{lemma}\label{lb2leqbi} $\C{\Lc,\Bc_2^\leq, \Bc_i}$, where $2<i$, has core $\minion{G}$ consisting of triples of functions $(\ell,b_2,b_i)$ satisfying $b_i \leq b_2 \leq \ell$.
\end{lemma}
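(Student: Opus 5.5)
The proof follows the same template as Lemmas~\ref{lcleqbjbi} and~\ref{lcleqbleqinf}. First I construct a minion homomorphism from $\C{\Lc,\Bc_2^\leq,\Bc_i}$ to $\minion{G}$; the reverse direction is the inclusion $\minion{G}\subseteq\C{\Lc,\Bc_2^\leq,\Bc_i}$. Then I show that $\minion{G}$ is a minion core using Lemma~\ref{proof-minion} and a suitable set of height-one identities.

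\emph{Homomorphism into $\minion{G}$.} I would look for a map of the form $(\ell,b_2,b_i)\mapsto(\ell,\,b_2',\,b_i\wedge b_2')$. Here $b_2'$ is built from $b_2$ and $\ell$ by pointwise Boolean operations, so that Lemma~\ref{binary-minion-homomorphism} (together with Lemma~\ref{dual-minion-homomorphism}) gives the minion homomorphism property. The requirements are that $b_2'\in\Bc_2^\leq$ and $b_2'\le\ell$. Then $b_i\wedge b_2'\in\Bc_i$ by Lemma~\ref{bi-mensi} and lies below $b_2'$.

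The obvious candidate $b_2\wedge\ell$ is in $\Bc_2$ but is not monotone. So this step is where I expect the main obstacle.

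The constraint $b_2'\le\ell$ with $b_2'$ monotone is very restrictive. If $\ell$ is the sum over an odd set $S$ of variables, every point of the upset $b_2'^{-1}(1)$ must have all $S$-coordinates equal to $1$. Otherwise raising a $0$ in $S$ would flip $\ell$ to $0$.

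I would therefore first try to express "all $S$-coordinates are $1$" pointwise from $\ell$ and available data, for example via $\ell$ and $\ell^*=\ell$. Failing that, I would use $b_2\le b_2^*$, which holds because $b_2$ preserves $\relation{R}_2$. I would test candidates such as $b_2\wedge\ell\wedge b_2^*$ on small arities before committing.

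\emph{Core property.} Every binary $\ell$ is a projection, say $x$. Then $b_2\le\ell$ forces $b_2\in\{x,x\wedge y\}$, and $b_i\le b_2$. Up to swapping $x$ and $y$, the non-projection binary members of $\minion{G}$ are therefore $(x,x,\wedge)$ and $(x,\wedge,\wedge)$.

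I would fix a large $n$ and an integer $b$ with $\beta(n,2)<b<\beta(n,i)-1$. I would then impose identities on an $n$-ary $f=(\ell,b_2,b_i)$: pseudosymmetry conditions at suitable weights, an equation of the form $f(x_{n-1})=x$, and an equation identifying $f(x_{n-1})$ with some $f(x_b)$.

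Together with $b_2\le\ell$, monotonicity, and the lemmas on $\beta(n,k)$, these identities should force the following. Some minor $f(x_m)$ has $\ell$-component $x$, and on it $b_2$ evaluates as $x$ while $b_i$ is $0$ on $1_m$, giving $(x,x,\wedge)$. Another minor has both $b_2$ and $b_i$ equal to $\wedge$, giving $(x,\wedge,\wedge)$.

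A witness in $\minion{G}$ should be $\ell=\pi_1$, $b_2(a)=1$ iff $a_1=1$ and $a$ has at least $b$ ones, and $b_i$ supported on a few near-full tuples with first coordinate $1$.

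Finally, by the argument preceding Lemma~\ref{lcleqbjbi}, including Lemma~\ref{projtoproj} for the projections, every binary member of $\minion{G}$ lies in the image of every endomorphism. Lemma~\ref{proof-minion} then shows that $\minion{G}$ is a minion core.
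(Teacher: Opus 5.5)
\textbf{The gap is the step you flag and leave open: the homomorphism into $\minion{G}$.} You never construct $\C{\Lc,\Bc_2^\le,\Bc_i}\to\minion{G}$. With the constraint $b_2\le\ell$ taken literally, no such minion homomorphism exists, so no choice of $b_2'$ can work.

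Your own observation is the obstruction: if $\ell$ depends on all $n$ variables, a monotone $b_2\le\ell$ must be the conjunction of all variables. Take $n=5$ and the height-one identities ``$f$ is $1$-pseudosymmetric and $f(xxxyy)=f(xxxxx)$''. The product satisfies them with:
\begin{itemize}
\item $\ell=x_1+\dots+x_5$;
\item $b_2=\atleast{3}$, the $5$-ary majority, which lies in $\Bc_2^\le$ since $3>\beta(5,2)$;
\item $b_i$ with $b_i^{-1}(1)=\{11100,11111\}$, which lies in $\Bc_\infty\subseteq\Bc_i$.
\end{itemize}
In any triple with $b_2\le\ell$, however, the pseudosymmetry forces $\ell$ to be the sum of all five variables. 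Hence $b_2=x_1\wedge\dots\wedge x_5$, contradicting $b_2(11100)=1$. By the argument of Lemma~\ref{identity-proof-strategy}, the product is not below $\{b_i\le b_2\le\ell\}$, so that minion is not a core of it.

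The same collapse defeats your plan for the core part as written. Any nontrivial pseudosymmetry makes $\ell$ the full sum, so your witness with $\ell=\pi_1$ is not admissible. Then every non-projection binary minor of $f$ has $b_2$-component $x\wedge y$, so $(x,x,{\wedge})$ never appears.

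\textbf{How to repair it.} Read the statement the way the paper's proof does: $\minion{G}=\{(\ell,b_2,b_i): b_i\le b_2,\ b_i\le\ell\}$, with no relation between $b_2$ and $\ell$. This is consistent with Corollary~\ref{lb2leqbicor}, where $\C{\Lc,\Bc_2^\le}$ is itself a core. With this reading:
\begin{itemize}
\item The homomorphism is simply $(\ell,b_2,b_i)\mapsto(\ell,b_2,b_i\wedge b_2\wedge\ell)$, by Lemmas~\ref{binary-minion-homomorphism} and~\ref{bi-mensi}.
\item Up to swapping $x$ and $y$, the non-projection binary members are $(x,x,{\wedge})$, $(y,x,{\wedge})$ and $(y,{\wedge},{\wedge})$. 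The middle one violates $b_2\le\ell$ and is missing from your list.
\end{itemize}
The paper obtains all three as the minors $f(x_m)$ for $m=\lceil n/2\rceil+4,\ \lceil n/2\rceil+3,\ \lceil n/2\rceil+1$ of a single $n$-ary $f$, with $n\equiv 1\pmod 4$ large. This $f$ is pseudosymmetric at weights $\lceil n/2\rceil$, $\lceil n/2\rceil+1$, $\lceil n/2\rceil+3$, $\lceil n/2\rceil+4$, and satisfies $f(x_{\lceil n/2\rceil+2})=x$ and $f(x_{\lceil n/2\rceil+1})=f(x_{\lfloor n/2\rfloor})$. It is witnessed by $\ell$ the sum of all variables, $b_2=\atleast{\lceil n/2\rceil+2}$, and $b_i^{-1}(1)=\{1_{\lceil n/2\rceil+2},1_n\}$.
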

\begin{proof}
  A homomorphism from the product to $\minion{G}$ is given by $(\ell, b_2, b_i)\mapsto (\ell, b_2, b_i\wedge b_2\wedge \ell)$, which is a minion homomorphism
by Lemmas \ref{binary-minion-homomorphism} and \ref{bi-mensi}.

Let $n\equiv 1\pmod 4$ be sufficiently large so that $\beta(n,i)-\left\lceil\frac{n}{2}\right\rceil > 6$. 
Consider $n$-ary functions $f = (\ell,b_2,b_i)$ satisfying the identities
\begin{enumerate}
    \item $f$ is $(\left\lceil\frac{n}{2}\right\rceil, \left\lceil\frac{n}{2}\right\rceil+1, \left\lceil\frac{n}{2}\right\rceil+3, \left\lceil\frac{n}{2}\right\rceil+4)$-pseudosymmetric.
    \item $f(x_{\left\lceil\frac{n}{2}\right\rceil+2}) = x$.
    \item $f(x_{\left\lceil\frac{n}{2}\right\rceil+1}) = f(x_{\left\lfloor\frac{n}{2}\right\rfloor})$
\end{enumerate}

From $f$ being $(\left\lceil\frac{n}{2}\right\rceil)$-pseudosymmetric we obtain that $\ell$ is the sum of all variables. From (2) and the monotonicity of $b_2$, we get $b_2(1_{\left\lceil\frac{n}{2}\right\rceil +3}) = b_2(1_{\left\lceil\frac{n}{2}\right\rceil +4}) = 1$. Using (3), monotonicity of $b_2$ and the fact that $\lfloor \frac{n}{2}\rfloor = \beta(n,2)$ we obtain $b_2(1_{\left\lceil\frac{n}{2}\right\rceil +1}) = b_2(1_{\left\lceil\frac{n}{2}\right\rceil}) = 0$.

\begin{center}
\begin{tabular}{ |ccc|l } 
 \hline
 $\ell$ & $b_2$ & $b_i$ & number of ones \\ 
  \hline
     $1$ & $1$ & $1$ & $n$ \\ 
\multicolumn{3}{|c|}{$\cdots$} & \color{gray}$\longleftarrow\beta(n,i)$ \\
     $1$ & $1$ & $0$  &  $\lceil\frac{n}{2}\rceil +4$ \\
     $0$ & $1$ & $0$  &  $\lceil\frac{n}{2}\rceil +3$ \\
     $1$ & ? & ?  &  $\lceil\frac{n}{2}\rceil +2$, $b_i(1_{\lceil\frac{n}{2}\rceil +2}) = 1$ \\
     $0$ & $0$ & $0$ & $\lceil\frac{n}{2}\rceil +1$ \\
     $1$ & $0$ & $0$  & $\lceil\frac{n}{2}\rceil$ \\
     $0$ & $0$ & $0$  & $\lfloor\frac{n}{2}\rfloor$ \\
\multicolumn{3}{|c|}{$\cdots$} \\
     $0$ & $0$ & $0$  & even \\
     $1$ & $0$ & $0$  & odd \\
\multicolumn{3}{|c|}{$\cdots$} \\
     $0$ & $0$ & $0$  & $0$\\
 \hline
\end{tabular}
\end{center}

Using the table, we obtain
\begin{align*}
f(x_{\lceil\frac{n}{2}\rceil +4}) &= (xx{\wedge}) \\
f(x_{\lceil\frac{n}{2}\rceil +3}) &= (yx{\wedge}) \\
f(x_{\lceil\frac{n}{2}\rceil +1}) &= (y{\wedge}{\wedge}) \\
\end{align*}

The right-hand sides include all non-projection binary functions in $\minion{G}$.

Finally, there exists $f\in\C{\Lc,\Bc_2^\leq, \Bc_i}$ satisfying these identities, for example $f=(\ell,b_2,b_i)$, where
\begin{itemize}
\item $\ell$ is the sum of all variables.
\item $b_2$ is $\atleast{\lceil\frac{n}{2}\rceil+2}$
\item $b_i$ is such that $b_i({1_{\lceil\frac{n}{2}\rceil +2}})=b_i(1_n)=1$ and zero everywhere else.
\end{itemize}
\end{proof}

Using the exactly same equalities as in the previous lemma, we obtain the following corollary.

\begin{corollary}\label{lb2leqbicor}
  \hfill
    \begin{itemize}
        \item $\C{\Bc_2^\leq}$ is a core.  
        \item $\C{\Lc, \Bc_2^\leq}$ is a core.
        \item $\C{\Lc, \Bc_3}$, has core $\minion{G}$ consisting of pairs of functions $(\ell, b_3)$ satisfying $b_3\leq \ell$.
        \item $\C{\Bc_2^\leq, \Bc_i}$, where $3<i$, has core $\minion{G}$ consisting of pairs of functions $b_2, b_i$ satisfying $b_i\leq b_2$.
    \end{itemize}
\end{corollary}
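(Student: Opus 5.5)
The plan is to derive each item of Corollary~\ref{lb2leqbicor} from the proof of Lemma~\ref{lb2leqbi} by deleting sorts. For each item we do three things, following the strategy described before Lemma~\ref{lcleqbjbi}. First, we give a homomorphism from the product onto the claimed $\minion{G}$; the inclusion $\minion{G}\to\clone{P}$ gives the reverse direction. Second, we take the same $n\equiv 1\pmod 4$ and the same identities (1)--(3) as in Lemma~\ref{lb2leqbi}, keeping only the relevant components of $f$. Third, we check that the non-projection binary minors of such an $f$ exhaust the non-projection binary members of $\minion{G}$, so that Lemma~\ref{proof-minion} together with Lemma~\ref{projtoproj} applies.

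\textbf{The homomorphisms.} For $\C{\Bc_2^\leq}$ and $\C{\Lc,\Bc_2^\leq}$ we take $\minion{G}=\clone{P}$, with no map needed. For $\C{\Lc,\Bc_3}$ we use $(\ell,b_3)\mapsto(\ell,b_3\wedge\ell)$. For $\C{\Bc_2^\leq,\Bc_i}$ we use $(b_2,b_i)\mapsto(b_2,b_i\wedge b_2)$. Both are minion homomorphisms by Lemmas~\ref{binary-minion-homomorphism} and~\ref{bi-mensi}.

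\textbf{Forcing the minors.} The deductions in the proof of Lemma~\ref{lb2leqbi} are made sort by sort, so they remain valid for any subset of the components $(\ell,b_2,b_i)$.
\begin{itemize}
\item[(a)] Pseudosymmetry at $\lceil n/2\rceil$ forces $\ell$ to be the sum of all variables.
\item[(b)] Identities (2) and (3), monotonicity, and $\beta(n,2)=\lfloor n/2\rfloor$ fix the $b_2$-column of the table.
\item[(c)] The lemma $b\le\beta(n,i)\Rightarrow f(1_b)=0$ fixes the $b_i$-column below $\beta(n,i)$.
\end{itemize}
For $\C{\Lc,\Bc_3}$ we must choose $n$ with $\beta(n,3)-\lceil n/2\rceil>6$, which is possible because $\beta(n,3)\approx 2n/3$. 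Reading the surviving columns of the table at $\lceil n/2\rceil+4$, $\lceil n/2\rceil+3$ and $\lceil n/2\rceil+1$ gives the following minors:
\begin{itemize}
\item[(i)] for $\C{\Bc_2^\leq}$, the minor $\wedge$;
\item[(ii)] for $\C{\Lc,\Bc_2^\leq}$, the minors $(x,\wedge)$, $(y,x)$, $(y,\wedge)$;
\item[(iii)] for $\C{\Lc,\Bc_3}$, the minors $(x,\wedge)$ and $(y,\wedge)$;
\item[(iv)] for $\C{\Bc_2^\leq,\Bc_i}$, the minors $(x,\wedge)$ and $(\wedge,\wedge)$.
\end{itemize}

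\textbf{Completeness of the lists.} Idempotent binary functions are $x$, $y$, $\wedge$, $\vee$. The clone $\Lc$ contains only $x$ and $y$. The clones $\Bc_k$ and $\Bc_k^\leq$ contain only $x$, $y$, $\wedge$, since $\vee$ violates $\relation{R}_k$.
\begin{itemize}
\item[(i)] The only non-projection binary member of $\C{\Bc_2^\leq}$ is $\wedge$.
\item[(ii)] For $\C{\Lc,\Bc_2^\leq}$, the non-projection pairs are $(x,\wedge)$, $(y,\wedge)$, $(y,x)$, and the swap $(x,y)$, which is obtained by exchanging variables.
\item[(iii)] For $\C{\Lc,\Bc_3}$, the constraint $b_3\le\ell$ excludes $(y,x)$ and $(x,y)$, leaving $(x,\wedge)$ and $(y,\wedge)$.
\item[(iv)] For $\C{\Bc_2^\leq,\Bc_i}$, the constraint $b_i\le b_2$ leaves $(x,\wedge)$, $(y,\wedge)$ and $(\wedge,\wedge)$.
\end{itemize}
Up to exchanging $x$ and $y$, these are exactly the lists in (i)--(iv).

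\textbf{Existence of a witness.} The witness is obtained by restricting the one from Lemma~\ref{lb2leqbi}: $\ell$ is the sum of all variables, $b_2=\atleast{\lceil n/2\rceil+2}$, and $b_i$ is supported on $1_{\lceil n/2\rceil+2}$ and $1_n$ (with $i=3$ in the third item). This witness lies in $\minion{G}$ in each case.

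\textbf{Main obstacle.} The only real care is needed in the $\C{\Lc,\Bc_3}$ case. We must check that $b_3$ defined as above is compatible with $\relation{R}_3$, which holds since any three tuples among its ones (with repetition) share a common $1$. We must also check that the table entries $b_3=0$ above $\lceil n/2\rceil+2$ are still forced when $i=3$; this is exactly why $n$ must satisfy $\beta(n,3)-\lceil n/2\rceil>6$.
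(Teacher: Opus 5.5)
Your proposal is correct and is essentially the paper's own argument: the corollary is obtained by reusing the identities (1)--(3) of Lemma~\ref{lb2leqbi} restricted to the relevant sorts, together with the analogous maps $(\ell,b_3)\mapsto(\ell,b_3\wedge\ell)$ and $(b_2,b_i)\mapsto(b_2,b_i\wedge b_2)$ and the restricted witness. One small slip: in item (ii) the level $\lceil n/2\rceil+4$ gives the projection $(x,x)$, not $(x,\wedge)$. The minor $(x,\wedge)$ instead comes from $(y,\wedge)$ by swapping variables, or can be read off at level $\lceil n/2\rceil$, and your ``up to exchanging $x$ and $y$'' already covers this.
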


\begin{lemma}\label{cbjbi} $\C{\Cc,\Bc_j^\leq, \Bc_i}$, where $1<j<i$, has core $\minion{G}$ consisting of triples of functions $(c,b_j,b_i)$ satisfying $b_i \leq b_j\leq c $.
\end{lemma}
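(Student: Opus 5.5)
The proof follows the same two-step scheme as Lemma~\ref{lcleqbjbi}: first an explicit minion homomorphism from the product onto $\minion{G}$ (the inclusion gives the other direction), then a single function $f\in\minion{G}$ whose minors contain every non-projection binary triple of $\minion{G}$, so that Lemma~\ref{proof-minion} applies.

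\textbf{Homomorphism onto $\minion{G}$.} I would use
$$(c,b_j,b_i)\mapsto \big((c\vee b_j)\wedge b_j^*,\ b_j,\ b_i\wedge b_j\big).$$
The third coordinate lies in $\Bc_i$ by Lemma~\ref{bi-mensi}. The first coordinate needs the argument of Lemma~\ref{c-mensi} with $a=b_j$ in place of a member of $\Ac$. That proof only uses that $a(x)$ and $a(\bar x)$ are never both $1$. This holds for $b_j\in\Bc_j$ with $j\ge 2$, since $x$ and $\bar x$ share no coordinate equal to $1$, and $b_j$ is compatible with $\relation{R}_2$.
\begin{itemize}
\item If $b_j(x)=1$, then $b_j(\bar x)=0$, so $b_j^*(x)=1$ and the first coordinate equals $1$ at $x$. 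Hence $b_j\le (c\vee b_j)\wedge b_j^*$.
\item The first coordinate is idempotent.
\item Clearly $b_i\wedge b_j\le b_j$.
\end{itemize}
Preservation of minors follows from Lemma~\ref{binary-minion-homomorphism}, applied repeatedly to coordinatewise Boolean combinations, together with Lemma~\ref{dual-minion-homomorphism}.

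\textbf{Core property: choice of parameters.} The binary members of $\minion{G}$ are constrained as follows. A binary self-dual idempotent $c$ is a projection. A binary $b_j$ lies in $\{x,y,x\wedge y\}$, and $b_i\le b_j$. So up to swapping $x,y$ the non-projection triples are $(x,x,\wedge)$ and $(x,\wedge,\wedge)$. I take $n$ odd and large, and $b$ with $\beta(n,j)<b<\beta(n,i)-1$ and $n-b-1>\lceil n/2\rceil$. The last condition ensures $n-b,\,n-b-1\le\beta(n,j)$ once $n$ is large. I impose on $f=(c,b_j,b_i)$ the identities:
\begin{enumerate}
\item $f$ is $b$- and $(b+1)$-pseudosymmetric;
\item $f(x_{n-2})=x$;
\item $f(x_{n-1})=f(x_b)$.
\end{enumerate}

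\textbf{Core property: reading off the minors.}
\begin{itemize}
\item By (2), $b_i(1_{n-2})=1$.
\item Since $b_i\le b_j$ and $b_j$ is monotone, $b_j(1_{n-1})=1$.
\item By (3), $b_j(1_b)=1$, and by monotonicity $b_j(1_{b+1})=1$. As $b_j\le c$, also $c(1_b)=c(1_{b+1})=1$.
\item Self-duality gives $c(0_b)=\overline{c(1_b)}=0$, and likewise $c(0_{b+1})=0$.
\item Pseudosymmetry together with the lemma on $\beta$ (inputs with $n-b$ or $n-b-1$ ones, both at most $\beta(n,j)$) gives $b_j(0_b)=b_j(0_{b+1})=0$.
\item Since $b+1<\beta(n,i)$, $b_i$ vanishes on inputs with $b$ or $b+1$ ones and on their complements.
\end{itemize}
Hence $f(x_{b+1})=(x,x,\wedge)$ and, I expect, $f(x_b)=(x,x,\wedge)$ as well. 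To obtain $(x,\wedge,\wedge)$ I would add a further level $b'$ with $\lceil n/2\rceil<b'\le\beta(n,j)$ where $b_j$ is forced to be $0$ but $c(x_{b'})=x$. This forcing should come from $b'$-pseudosymmetry plus self-duality of $c$, for instance by imposing $f(x_{b'})=f(y_{n-b'})$-type identities as in Lemma~\ref{lcleqbjbi}.

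\textbf{Witness and main obstacle.} The witness is $c$ the majority on all variables, $b_j=\atleast{b}$ (so $b_j\le c$ because $b>n/2$), and $b_i$ equal to $1$ exactly on inputs with $n-2$ or $n$ ones. The main obstacle is pinning down $c$ at the additional level $b'$ using only self-duality and pseudosymmetry, since $c$ need not be monotone here. If that fails, I would force $c(x_{b'})=x$ directly by an identity, e.g.\ equating $f(x_{b'})$ with a minor already known to have first coordinate $x$.
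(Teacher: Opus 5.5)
Your homomorphism is correct, and it is more careful than the paper's. The paper maps $(c,b_j,b_i)\mapsto(c,\,b_j\wedge c,\,b_i\wedge b_j\wedge c)$ and cites Lemma~\ref{bi-less-mensi}. That lemma needs $c$ to be monotone, which fails in $\Cc$. For example, take $j=2$, $c$ the $5$-ary parity and $b_j=\atleast{3}$: then $b_j\wedge c$ is $1$ on $11100$ and $0$ on $11110$, so it is not in $\Bc_2^\leq$. Your first coordinate $(c\vee b_j)\wedge b_j^*$ avoids this, and it is justified by the argument of Lemma~\ref{c-mensi} together with $\relation{R}_2$-compatibility of $b_j$.

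Your derivation of $(x,x,\wedge)$ is also sound, apart from two slips:
\begin{itemize}
\item The condition $n-b-1>\lceil n/2\rceil$ contradicts $b>\beta(n,j)\ge\lfloor n/2\rfloor$ for odd $n$, so drop it. The values $b_j(0_b)=b_j(0_{b+1})=0$ follow anyway from $\relation{R}_2$, since $0_b$ and $0_{b+1}$ are the complements of $1_b$ and $1_{b+1}$.
\item Your witness $b_i$ needs $n>2i$, and it is not in $\Bc_\infty$ when $i=\infty$. Take $b_i$ equal to $1$ only at $1_{n-2}$ and $1_n$ instead.
\end{itemize}

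\textbf{The genuine gap.} You never actually obtain the triple $(x,\wedge,\wedge)$.

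You locate the difficulty in pinning down the first coordinate, but that is unnecessary. Every binary member of $\Cc$ is a projection. So once the middle component of some $f(x_{b'})$ is forced to be $\wedge$ (the third then equals $\wedge$ because $b_i\le b_j$), $f(x_{b'})$ is $(x,\wedge,\wedge)$ or $(y,\wedge,\wedge)$, and $f(y_{b'})$ is the other one. This is exactly how the paper reads off $f(x_{\lceil n/2\rceil})$.

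Your suggested repairs would not help. The identity $f(x_{b'})=f(y_{n-b'})$ already follows from $b'$-pseudosymmetry. Equating $f(x_{b'})$ with a known minor such as $f(x_b)$ equates all three coordinates, which makes the middle one $x$ rather than $\wedge$.

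For $j\ge 3$ the fix is to add $\lceil n/2\rceil$-pseudosymmetry. For large $n$ we have $\lfloor n/2\rfloor<\lceil n/2\rceil\le\beta(n,j)$, so the lemma on $\beta(n,k)$ gives $b_j=0$ at both $1_{\lceil n/2\rceil}$ and $0_{\lceil n/2\rceil}$, and your witness still works.

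For $j=2$ your range $\lceil n/2\rceil<b'\le\beta(n,2)=\lfloor n/2\rfloor$ is empty, and no added pseudosymmetry can help. For finite $i$, the triple (majority, majority, your $b_i$) lies in $\minion{G}$, is fully symmetric and satisfies your (1)--(3), yet every binary minor of its middle component is a projection.

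You need a genuinely different identity. One option is a second symbol $g$ as in Lemma~\ref{cb2}: $5$-ary, fully symmetric, with $g(x_3)=g(x_1)$.
\begin{itemize}
\item The lemma on $\beta(n,k)$ gives $b_j=0$ on inputs with $1$ or $2$ ones.
\item The identity transfers this to inputs with $3$ and $4$ ones.
\item Hence the last two components of $g$ are the $5$-ary meet, and $g(x_2)\in\{(x,\wedge,\wedge),(y,\wedge,\wedge)\}$.
\item The triple ($5$-ary parity, meet, meet) is a witness.
\end{itemize}

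The paper's own identities do not handle $j=2$ either. There $\beta(n,2)+1=\lceil n/2\rceil$, so its condition (2) gives $b_j(1_{\lceil n/2\rceil})=1$. Meanwhile (1) and (3) give $b_j(1_{\lceil n/2\rceil+1})=b_j(1_{\lfloor n/2\rfloor})=0$, which contradicts monotonicity.
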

\begin{proof}
  A homomorphism from the product to $\minion{G}$ is provided by the map $(c,b_j,b_i)\mapsto (c, b_j\wedge c, b_i\wedge b_j\wedge c)$, which is a minion homomorphism
by Lemmas \ref{binary-minion-homomorphism}, \ref{bi-mensi}, and \ref{bi-less-mensi}

Let us take odd $n$ sufficiently large so that $\beta(n,i)-\beta(n,j) > 1$,  $\beta(n,i)-\left\lceil\frac{n}{2}\right\rceil > 1$. 
We consider $n$-ary functions $f = (c,b_j,b_i)$ satisfying identities
\begin{enumerate}
    \item $f$ is $(\lceil\frac{n}{2}\rceil, \beta(n,j)+2)$-pseudosymmetric.
    \item $f(x_{\beta(n,j)+1}) = x$.
    \item $f(x_{\left\lceil\frac{n}{2}\right\rceil +1}) = f(x_{\left\lfloor\frac{n}{2}\right\rfloor})$
\end{enumerate}

From (1), (2) and $b_j$ monotone we have that $b_j(1_{\beta(n,j)+2}) = 1$. From (2), (3) and the monotonicity of $b_j$ we have that $b_j(1_{\left\lceil\frac{n}{2}\right\rceil +1}) = b_j(1_{\left\lceil\frac{n}{2}\right\rceil}) = 0$. Let $p = c(1_{\lceil\frac{n}{2}\rceil})$ and $q = c(1_{\lceil\frac{n}{2}\rceil +1})$

\begin{center}
\begin{tabular}{ |ccc|l } 
 \hline
 $c$ & $b_j$ & $b_i$ & number of ones \\ 
  \hline
     $1$ & $1$ & $1$ & $n$ \\ 
\multicolumn{3}{|c|}{$\cdots$} & \color{gray}$\longleftarrow\beta(n,i)$ \\
     $1$ & $1$ & $0$  &  $\beta(n,j)+2$ \\
    ? & ? & ?  &  $\beta(n,j)+1$, $b_j(1_{\beta(n,j)+1}) = 1$ \\
\multicolumn{3}{|c|}{$\cdots$} & \color{gray}$\longleftarrow\beta(n,j)$ \\
     $q$ & $0$ & $0$ & $\lceil\frac{n}{2}\rceil +1$ \\
     $p$ & $0$ & $0$  & $\lceil\frac{n}{2}\rceil$ \\
     $\bar{p}$ & $0$ & $0$  & $\lfloor\frac{n}{2}\rfloor$ \\
    $\bar{q}$ & $0$ & $0$  & $\lfloor\frac{n}{2}\rfloor-1$ \\
\multicolumn{3}{|c|}{$\cdots$} \\
     $0$ & $0$ & $0$  & $0$\\
 \hline
\end{tabular}
\end{center}

Using this table, we obtain  
\begin{align*}
f(x_{\beta(n,j)+2}) &= (xx{\wedge}) \\
f(x_{\lceil\frac{n}{2}\rceil}) &= \left\{\begin{matrix}(x{\wedge}{\wedge}) \\ (y{\wedge}{\wedge})\end{matrix}\right.
\end{align*}

The right-hand sides include all non-projection binary functions in $\minion{G}$. 

Finally, there exists $f\in\C{\Cc,\Bc_j^\leq, \Bc_i}$ satisfying these identities, for example $f=(c,b_j,b_i)$, where
\begin{itemize}
\item $c$ is majority on all elements with the exception that $c(1_{\left\lceil\frac{n}{2}\right\rceil +1}) = 0$ and $c(0_{\left\lceil\frac{n}{2}\right\rceil +1}) = 1$.
\item $b_j$ is $\atleast{\beta(n,j)+1}$
\item $b_i$ is such that $b_i({1_{\beta(n,j)+1}})=b_i(1_n)=1$ and zero everywhere else.
\end{itemize}
\end{proof}

Using exactly the same identities as in the previous lemma, we obtain
the following corollary.

\begin{corollary}\label{cbjbicor}
  \hfill
    \begin{itemize}
        \item $\C{\Cc}$ is a core.
        \item $\C{\Bc_3}$ is a core.
        \item $\C{\Cc, \Bc_j^\leq}$, where $1<j<\infty$, has core $\minion{G}$ consisting of pairs of functions $c, b_j$ satisfying $b_j\leq c$.
        \item $\C{\Cc, \Bc_i}$, where $2<i$, has core $\minion{G}$ consisting of pairs of functions $c, b_i$ satisfying $b_i\leq c$.
    \end{itemize}
\end{corollary}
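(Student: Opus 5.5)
The statement to prove is Corollary~\ref{cbjbicor}. I would reuse the argument of Lemma~\ref{cbjbi} item by item: give an explicit map from the product onto the claimed minion $\minion{G}$, and use the same identities (1)--(3) to show that $\minion{G}$ is a minion core.

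For the homomorphisms I would restrict the formula of Lemma~\ref{cbjbi} to the sorts that are present:
\begin{itemize}
\item for $\C{\Cc,\Bc_j^\leq}$, use $(c,b_j)\mapsto(c,b_j\wedge c)$;
\item for $\C{\Cc,\Bc_i}$, use $(c,b_i)\mapsto(c,b_i\wedge c)$.
\end{itemize}
Both are minion homomorphisms by Lemmas~\ref{binary-minion-homomorphism}, \ref{bi-mensi} and \ref{bi-less-mensi}. The reverse homomorphism is inclusion in each case. For $\C{\Cc}$ and $\C{\Bc_3}$ the minion $\minion{G}$ is the product itself, so only coreness needs proof.

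For coreness I would use Lemma~\ref{proof-minion} together with Lemma~\ref{projtoproj}, exactly as in the general strategy described before Lemma~\ref{lcleqbjbi}. Take the same $n$-ary identities as in Lemma~\ref{cbjbi}, with parameters read off from the relevant sorts:
\begin{itemize}
\item for $\C{\Cc,\Bc_j^\leq}$, take $j$ as given and choose $n$ large;
\item for $\C{\Cc,\Bc_i}$ and $\C{\Bc_3}$, use $\beta(n,i)$ (respectively $\beta(n,3)$) where $\beta(n,j)$ appeared.
\end{itemize}
Projecting the witness $(c,b_j,b_i)$ of Lemma~\ref{cbjbi} onto the present coordinates gives a function in $\minion{G}$ satisfying the identities. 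In the $\Bc_i$-only cases I would take $b_i$ to be the function equal to $1$ exactly on $1_n$ and on the $\pi$-images of $1_{\beta(n,i)+1}$.

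I then redo the table restricted to the present columns.
\begin{itemize}
\item For $\C{\Cc}$: identity (3) and self-duality force $c(1_{\lceil n/2\rceil})=p$ with $p$ arbitrary. This yields $x$ or $y$ as a minor, plus the minor $f(x_{\lceil n/2\rceil+1})$. Since $\Cc$ has no binary non-projections, the projections from Lemma~\ref{projtoproj} already suffice.
\item For $\C{\Cc,\Bc_j^\leq}$: the rows give $(x,\wedge)$ and $(y,\wedge)$ at $\lceil n/2\rceil$ ones, and $(x,x)$ at $\beta(n,j)+2$ ones. The binary members of $\minion{G}$ satisfy $b\le c$ with $b$ a meet-type minor, so all of them are obtained, up to swapping $x$ and $y$.
\item For $\C{\Bc_3}$ and $\C{\Cc,\Bc_i}$: here monotonicity is unavailable, so I would instead argue from $b_i(1_{\beta(n,i)+1})=1$ (given by (2)) and from the lemma that $b$-pseudosymmetric functions in $\Bc_k$ with $b\le\beta(n,k)$ vanish on $1_b$. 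This gives the row with the $\wedge$ value at the pseudosymmetric levels.
\end{itemize}

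The main obstacle is the $\Bc_i$-without-monotonicity cases. Lemma~\ref{cbjbi} used monotonicity of $b_j$ to transfer values between levels, and that is no longer available. To handle this I would move pseudosymmetry to level $\beta(n,i)+1$ instead of $\beta(n,j)+2$, and set $f(x_{\beta(n,i)+1})=x$ directly. This should still exhibit the binary function with the $\Bc_i$ coordinate equal to $\wedge$ and the $\Cc$ coordinate a projection, which is the only additional binary type that needs to be covered.
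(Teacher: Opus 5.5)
Your plan is the paper's own approach: restrict the homomorphism and the identities of Lemma~\ref{cbjbi} to the sorts that are present. It goes through for $\C{\Cc}$, $\C{\Bc_3}$ and $\C{\Cc,\Bc_i}$. For $\C{\Cc,\Bc_i}$ the map $(c,b_i)\mapsto(c,b_i\wedge c)$ is justified by Lemma~\ref{bi-mensi}, which needs no monotonicity. The item $\C{\Cc,\Bc_j^\leq}$, however, has two genuine gaps, both inherited from copying Lemma~\ref{cbjbi}.

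\textbf{The homomorphism for $\C{\Cc,\Bc_j^\leq}$.} The map $(c,b_j)\mapsto(c,b_j\wedge c)$ does not land in $\minion{G}$. Lemma~\ref{bi-less-mensi} requires the other function to be monotone, and $c\in\Cc$ need not be. Take $c(a_1,a_2,a_3)=a_1+a_2+a_3$ and $b_j(a_1,a_2,a_3)=a_1$. Then $b_j\wedge c$ is $1$ at $100$ and $0$ at $110$, so it is not in $\Bc_j^\leq$.

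A correct map is $(c,b_j)\mapsto((c\vee b_j)\wedge b_j^*,\,b_j)$, as in Lemma~\ref{ca}. Since $b_j$ preserves $\relation{R}_2$, it is never $1$ on both $a$ and $\bar a$. Hence the case analysis of Lemma~\ref{c-mensi} works verbatim with $b_j$ in place of $a$. Moreover $b_j\leq b_j^*$, so $b_j$ lies below the new first coordinate. Lemmas~\ref{binary-minion-homomorphism} and~\ref{dual-minion-homomorphism} make this a minion homomorphism.

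\textbf{The identities for $j=2$.} Here the identities of Lemma~\ref{cbjbi} have no solution in $\minion{G}$, so ``projecting the witness'' fails. For odd $n$ we have $\beta(n,2)+1=\lceil n/2\rceil$. Then (2) gives $b_2(1_{\lceil n/2\rceil})=1$, monotonicity gives $b_2(1_{\lceil n/2\rceil+1})=1$, and (3) transfers this to $b_2(1_{\lfloor n/2\rfloor})=1$. But $b_2$ is $\lfloor n/2\rfloor$-pseudosymmetric and $\lfloor n/2\rfloor=\beta(n,2)$, so the $\beta(n,k)$ vanishing lemma forces $b_2(1_{\lfloor n/2\rfloor})=0$. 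Concretely, the projected witness $\atleast{\lceil n/2\rceil}$ violates (3), and it is not below $c$ at $1_{\lceil n/2\rceil+1}$.

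Use instead the identities of Lemma~\ref{cb2}: $5$-ary, fully symmetric, $f(x_3)=f(x_1)$. These force $b_2$ to be the $5$-ary meet without using monotonicity. Hence $f(x_2)\in\{(x,\wedge),(y,\wedge)\}$. They are satisfied in $\minion{G}$ by the pair ($5$-ary parity, $\atleast{5}$).

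\textbf{The $\Bc_i$ items.} Your ``main obstacle'' there is misplaced, although those items are fine. The $\wedge$ minor comes from $\lceil n/2\rceil$-pseudosymmetry alone. For $i\geq 3$ and large $n$, both $\lceil n/2\rceil$ and $\lfloor n/2\rfloor$ are at most $\beta(n,i)$, so $b_i(x_{\lceil n/2\rceil})=\wedge$. Meanwhile the $\Cc$-coordinate is a binary member of $\Cc$, hence a projection. Conditions at level $\beta(n,i)+1$ play no role, since the vanishing lemma says nothing above $\beta(n,i)$.
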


\begin{lemma}\label{lb2} $\C{\Lc, \Bc_2}$, has core $\minion{G}$ consisting of pairs of functions $(\ell,b_2)$ satisfying $b_2\leq \ell $.
\end{lemma}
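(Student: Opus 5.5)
The plan follows the same scheme as the preceding core computations: first exhibit homomorphisms in both directions between $\C{\Lc,\Bc_2}$ and $\minion{G}$, then show that $\minion{G}$ is a minion core via Lemma~\ref{proof-minion}, using a small set of height-one identities.

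\textbf{Homomorphisms.} The inclusion $\minion{G}\to\C{\Lc,\Bc_2}$ is a minion homomorphism. In the other direction I would take
$$(\ell,b_2)\mapsto(\ell,\;b_2\wedge\ell).$$
This is a minion homomorphism by Lemma~\ref{binary-minion-homomorphism}, applied with the two coordinate projections as $\alpha_1,\alpha_2$. Its image lies in $\minion{G}$ for two reasons. First, $b_2\wedge\ell\in\Bc_2$ by Lemma~\ref{bi-mensi}; idempotency is kept because $\ell$ is idempotent. Second, $b_2\wedge\ell\le\ell$ holds trivially.

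\textbf{Binary members of $\minion{G}$.} The binary members of $\Lc$ are only $x$ and $y$, since an odd number of coefficients must equal $1$. The binary members of $\Bc_2$ are $x$, $y$ and $x\wedge y$: the value $f(10)=f(01)=1$ is excluded by $\relation{R}_2$. Imposing $b_2\le\ell$ leaves exactly four pairs: $(x,x)$, $(y,y)$, $(x,\wedge)$ and $(y,\wedge)$. The first two are projections, and $(y,\wedge)$ is obtained from $(x,\wedge)$ by swapping the variables. So it suffices to force $(x,\wedge)$ to appear as a minor. Projections lie in the image of every endomorphism by Lemma~\ref{projtoproj}, applied on each sort.

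\textbf{Identities forcing $(x,\wedge)$.} I would use ternary $f=(\ell,b_2)\in\minion{G}$ that are $1$-pseudosymmetric, i.e.\ $f(xyy)=f(yxy)=f(yyx)$. Then:
\begin{itemize}
\item Since $0<1<3$, the pseudosymmetry lemma for $\Lc$ gives $\ell=x_1+x_2+x_3$.
\item Since $1\le\beta(3,2)=1$, the lemma on $\beta(n,k)$ gives $b_2(1_1)=0$, and by pseudosymmetry $b_2$ vanishes on all inputs with exactly one $1$.
\item On inputs with exactly two ones, $\ell$ evaluates to $0$. The constraint $b_2\le\ell$ built into $\minion{G}$ therefore forces $b_2$ to be $0$ there too.
\end{itemize}
This last point is the one delicate step. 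Without the constraint, weight-two inputs could take value $1$ without contradicting $\relation{R}_2$, since any two of them share a coordinate. With these values, $f(xyy)$ evaluated at $(x,y)=(1,0)$ and $(0,1)$ gives $\ell(xyy)=x$ and $b_2(xyy)=x\wedge y$. Hence $f(xyy)=(x,\wedge)$.

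\textbf{A witness and conclusion.} Such an $f$ exists in $\minion{G}$: take $\ell=x_1+x_2+x_3$ and $b_2=x_1\wedge x_2\wedge x_3$. Then $b_2\in\Bc_2$, $b_2\le\ell$, and both components are symmetric. Any endomorphism $\alpha$ of $\minion{G}$ preserves the identities, so $\alpha(f)$ has $(x,\wedge)$ as a minor. By Lemma~\ref{proof-minion}, $\minion{G}$ is a core, and it is equivalent to $\C{\Lc,\Bc_2}$ by the two homomorphisms above.
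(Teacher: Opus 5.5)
Your proposal is correct and takes the same route as the paper. It uses the same homomorphism $(\ell,b_2)\mapsto(\ell,b_2\wedge\ell)$ and the same fully symmetric ternary identities (for arity $3$, your $1$-pseudosymmetry is equivalent to full symmetry); weight-one inputs are forced to $0$ by $\beta(3,2)=1$ and weight-two inputs by $b_2\le\ell$, giving the minor $f(xyy)=(x,\wedge)$, with the same witness $(x_1+x_2+x_3,\;x_1\wedge x_2\wedge x_3)$. One small wording point: Lemma~\ref{projtoproj} is not applied ``on each sort'' but to $\minion{G}$ as a whole, where its proof goes through because $(x,x)$ is the only unary member of $\minion{G}$.
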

\begin{proof}
  A homomorphism from the product to $\minion{G}$ is provided by the map $(\ell,b_2)\mapsto (\ell, b_2\wedge \ell)$, which is a minion homomorphism by Lemmas \ref{binary-minion-homomorphism} and Lemma \ref{bi-mensi}.

Consider $3$-ary functions $f = (\ell, b_2)$, which are fully symmetric

From the symmetry condition we have that $\ell$ is the sum of the three variables modulo $2$. Because $b_2\leq \ell$ we have that $b_2(1_2) = \ell(1_2)=0$. Further more we get $b_2(1_1) = 0$, because $1= \beta(3,2)$.

\begin{center}
\begin{tabular}{ |cc|l } 
 \hline
 $\ell$ & $b_2$ & number of ones \\ 
  \hline
     $1$ & $1$ & $3$ \\ 
     $0$ & $0$ & $2$ \\
     $1$ & $0$ & $1$ \\
     $0$ & $0$ & $0$ \\    
 \hline
\end{tabular}
\end{center}

Using this table, we may see  $f(x_1) = (x{\wedge})$, which is the only nontrivial binary minor in $\minion{G}$. 

Finally, there exists $f\in\C{\Lc, \Bc_2}$ satisfying these identities, for example $f=(\ell,b_2)$, where
\begin{itemize}
\item $\ell$ is sum of the three variables modulo $2$.
\item $b_2$ is $\atleast{3}$.
\end{itemize}
\end{proof}

Using exactly the same identities as in the previous lemma, we obtain
the following corollary.

\begin{corollary}\label{lb2cor}
  \hfill
    \begin{itemize}
        \item $\C{\Bc_2}$ is a core.
    \end{itemize}
\end{corollary}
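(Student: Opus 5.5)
The claim is that $\C{\Bc_2}$ is a core. I will follow the general strategy set out before Lemma~\ref{lcleqbjbi}. Since here $\minion{G}=\C{\Bc_2}$ itself, no homomorphism to a smaller minion is needed. It suffices, by Lemma~\ref{proof-minion}, to show that every binary function of $\Bc_2$ lies in the image of every endomorphism $\alpha$.

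First I will list the binary members of $\Bc_2$. These are idempotent binary functions $g$ such that every two elements of $g^{-1}(1)$ share a coordinate equal to $1$. Idempotency forces $g(00)=0$ and $g(11)=1$. The values $g(01)$ and $g(10)$ cannot both be $1$, since $01$ and $10$ share no common $1$. This leaves only the two projections and $x\wedge y$. By Lemma~\ref{projtoproj} the projections are in the image of $\alpha$, so it remains to show that $\wedge$ is in the image.

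For this I will reuse the identities of Lemma~\ref{lb2}: a ternary $f$ that is fully symmetric, that is, $1$- and $2$-pseudosymmetric. The witness $f=\atleast{3}$ (the ternary meet) lies in $\Bc_2$ and satisfies them. The endomorphism $\alpha$ preserves height-one identities, so $\alpha(f)$ is again a fully symmetric ternary function in $\Bc_2$.

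The remaining step is to check that every such function has $\wedge$ as a binary minor. By the lemma on $\beta$, since $\beta(3,2)=1$ and $\alpha(f)$ is $1$-pseudosymmetric, we get $\alpha(f)(1_1)=0$. Next I rule out the value $1$ on two ones. If $\alpha(f)(1_2)=1$, then by symmetry $\alpha(f)$ equals $1$ on all weight-$2$ tuples, and the tuples $110$ and $011$ share only the middle coordinate. I should use the triple $110,011,101$ instead, but $\relation{R}_2$ only constrains pairs. The correct pair is $110$ and $101$, which share coordinate $1$, so this gives no contradiction. The honest check is therefore whether the ternary majority lies in $\Bc_2$. It does not: $110$ and $011$ share coordinate $2$, and $101$ and $011$ share coordinate $3$; every pair shares a coordinate, so majority is compatible with $\relation{R}_2$. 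Hence majority is a fully symmetric member of $\Bc_2$ whose binary minors are projections only.

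This is the main obstacle. Full symmetry alone does not force $\wedge$, because majority also satisfies it. The Lemma~\ref{lb2} table used $b_2\le \ell$, and that constraint is unavailable here. So I will choose a stronger identity that kills majority. I take a binary $f$ with $f(xy)=f(yx)$, as in Lemma~\ref{C}. Any binary idempotent member of $\Bc_2$ satisfying this must have $f(01)=f(10)$, and both cannot be $1$, so $f=\wedge$. The witness $\wedge$ itself satisfies the identity. Therefore $\alpha(\wedge)$ is a commutative binary member of $\Bc_2$, hence equals $\wedge$. Combined with the projections, every binary function of $\C{\Bc_2}$ is in the image of $\alpha$, and Lemma~\ref{proof-minion} finishes the proof.
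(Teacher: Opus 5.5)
Your final argument is correct, and it takes a different route from the paper. The paper handles $\C{\Bc_2}$ with the remark that the identities of Lemma~\ref{lb2} (ternary full symmetry) carry over. As you observed, those identities alone do not force $\wedge$. In Lemma~\ref{lb2} the entry $b_2(1_2)=0$ comes from the constraint $b_2\le\ell$ built into the core $\minion{G}$. Without that constraint, the ternary majority is a fully symmetric member of $\Bc_2$ whose binary minors are only projections. So the paper's one-line justification does not literally go through for this item, and your binary identity $f(xy)=f(yx)$, as in Lemma~\ref{C}, is what makes the proof work.

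The argument runs as follows:
\begin{itemize}
\item The binary members of $\Bc_2$ are exactly $\pi_1^2$, $\pi_2^2$ and $\wedge$.
\item Any endomorphism $\alpha$ preserves minors, so $\alpha(\wedge)$ is a commutative idempotent binary member of $\Bc_2$, hence equal to $\wedge$.
\item Projections are fixed by Lemma~\ref{projtoproj}.
\end{itemize}
Thus $\alpha$ is the identity on binary functions, and Lemma~\ref{proof-minion} finishes; Lemma~\ref{bin-equal} even gives $\alpha=\mathrm{id}$ outright. This is the shortest possible route. The 5-ary identities of Lemma~\ref{cb2} would also work, since they force the $\Bc_2$-component to vanish on all inputs of weight $1,\dots,4$.

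One thing to fix in the write-up: the sentence ``It does not:'' is contradicted by your own next sentence. The ternary majority \emph{does} lie in $\Bc_2$, and that is exactly why full symmetry fails. Delete that sentence and the false start about the triple $110,011,101$, and present only the commutativity argument.
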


\begin{lemma}\label{lcleqb3} $\C{\Lc,\Cc^\leq,\Bc_3}$ has core $\minion{G}$ consisting of triples of functions $(\ell,c,b_3)$ satisfying $b_3 \leq \ell$.
\end{lemma}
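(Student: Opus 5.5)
The plan is to follow the same scheme as Lemmas~\ref{lcleqbjbi}, \ref{lb2leqbi} and~\ref{cbjbi}. First I would exhibit a minion homomorphism from the product to $\minion{G}$, namely
$$(\ell,c,b_3)\mapsto(\ell,\,c,\,b_3\wedge \ell).$$
By Lemma~\ref{bi-mensi}, $b_3\wedge\ell\in\Bc_3$. By Lemma~\ref{binary-minion-homomorphism}, applied with ${\rm bin}=\wedge$ to the projection homomorphisms onto the first and third sorts, this map preserves minors. The reverse homomorphism is the inclusion $\minion{G}\subseteq\C{\Lc,\Cc^\leq,\Bc_3}$.

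It remains to show that $\minion{G}$ is a minion core via Lemma~\ref{proof-minion}. The binary members of $\Lc$ and $\Cc^\leq$ are only projections, and the binary members of $\Bc_3$ are $x$, $y$ and $x\wedge y$. Hence the binary elements of $\minion{G}$, up to swapping $x$ and $y$, are the projection $(x,x,x)$ together with the non-projections
$$(x,y,x),\qquad (x,x,\wedge),\qquad (x,y,\wedge).$$
The constraint $b_3\le\ell$ permits $b_3=x$ only when $\ell=x$, while $c$ is unconstrained. This is exactly why the tuple $(x,y,x)$ survives in this core: we only intersect $b_3$ with $\ell$, not with $c$. I would therefore look for a system $(*)$ of height-one identities in one symbol $f$ of odd arity $n$. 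The system should force $\ell$ to be the sum of all variables and $c$ to be the majority. To do this I would impose $\lceil n/2\rceil$-pseudosymmetry and invoke the two pseudosymmetry lemmas for $\Lc$ and $\Cc^\leq$. Then $(*)$ should pin down enough values of $b_3$ so that the three tuples above occur as minors $f(\cdot)$ along patterns with an odd number of $x$'s. On such patterns $\ell=x$ automatically, and $c=x$ or $c=y$ according to whether the number of $x$'s exceeds $n/2$.

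Concretely, I would get $(x,x,\wedge)$ and $(x,y,\wedge)$ from patterns where $b_3$ is $0$ on both the pattern and its complement. For an odd number $k\le\beta(n,3)$ of $x$'s, the $\beta$-lemma gives $b_3(1_k)=0$ as soon as $f$ is $k$-pseudosymmetric. The complement $0_k$ has an even number $n-k$ of ones, so there $b_3\le\ell=0$. Taking one such $k$ below $n/2$ and one between $\lceil n/2\rceil$ and $\beta(n,3)$ then yields $(x,y,\wedge)$ and $(x,x,\wedge)$ respectively. For the tuple $(x,y,x)$ I need a non-pseudosymmetric pattern $a$ with few ones (so $c(a)=0$), an odd number of ones, and $b_3(a)=1$. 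I would try to force $b_3(a)=1$ by an identity equating $f$ at pattern $a$ with $f$ at a pattern on which $\ell$ and $c$ are already known to agree with their values at $a$, and on which $b_3$ is forced to be $1$, for instance a pattern related to $1_n$ by a chain of identities as in Lemma~\ref{B2leq}. Finally I would check satisfiability by an explicit witness $f=(\ell,c,b_3)$: $\ell$ the sum of all variables, $c$ the majority, and $b_3$ equal to $1$ exactly on $1_n$ and on the chosen pattern $a$. Compatibility with $\relation{R}_3$ is then immediate, since $b_3^{-1}(1)$ has a coordinate common to all its elements, and $b_3\le\ell$ holds because $a$ has an odd number of ones.

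The main obstacle is forcing $b_3(a)=1$ for a low-weight pattern $a$ by height-one identities, without simultaneously forcing $c(a)=1$ and thus contradicting majority. Any identity of the form $f(\text{pattern})=x$ constrains all three sorts at once. My first attempt would therefore be an identity of the shape $f(a)=f(a')$, where $a$ and $a'$ have the same parity and lie on the same side of $n/2$, so that $\ell$ and $c$ agree on them, and where $b_3(a')=1$ is already forced. If that fails, I would instead drop the full majority requirement on $c$ and allow a modified self-dual monotone $c$, as in the witness of Lemma~\ref{cbjbi}.
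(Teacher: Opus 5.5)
\paragraph{There is a genuine gap: the statement as printed is false.} The step you flag as the main obstacle cannot be carried out, because $\minion{G}=\{(\ell,c,b_3): b_3\le\ell\}$ is not a minion core. Consider $\phi\colon(\ell,c,b_3)\mapsto(\ell,c,b_3\wedge c)$. It maps $\minion{G}$ into itself:
\begin{itemize}
\item $b_3\wedge c$ is idempotent;
\item it lies in $\Bc_3$ by Lemma~\ref{bi-mensi};
\item $b_3\wedge c\le b_3\le\ell$.
\end{itemize}
It also preserves minors, since pointwise meets commute with minors. Every element of its image has third component below its second. So $(x,y,x)\in\minion{G}$ is not in the image, and $\phi$ is not an automorphism.

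The same map rules out your search for identities. If $f=(\ell,c,b_3)\in\minion{G}$ satisfies a height-one system $(*)$, then so does $\phi(f)$, and its third component is $0$ wherever $c$ is $0$. Hence no satisfiable $(*)$ can force $(x,y,x)$ to appear among the minors of every solution. In particular, no identity $f(a)=f(a')$ can force $b_3(a)=1$ while $c(a)=0$. Replacing the majority by another self-dual monotone $c$, as in your fallback, does not change this. Your list of binary elements is correct. What the survival of $(x,y,x)$ signals is that the statement is misprinted, not that a cleverer identity is needed.

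\paragraph{What the paper's proof actually establishes.} It proves that the core is $\{(\ell,c,b_3): b_3\le\ell,\ b_3\le c\}$. This matches the entry for $\C{\Lc,\Cc^\leq,\Bc_i}$ with $3<i$ in Corollary~\ref{lcleqbjbicor}.
\begin{itemize}
\item Its homomorphism is $(\ell,c,b_3)\mapsto(\ell,c,b_3\wedge c\wedge\ell)$, whose image satisfies $b_3\le c$.
\item From a fully symmetric $n$-ary $f$, with $n\equiv 1\pmod{4}$ large, it reads off only $f(x_{\lceil n/2\rceil})=(x,x,\wedge)$ and $f(x_{\lceil n/2\rceil+1})=(y,x,\wedge)$.
\item Together with their $x\leftrightarrow y$ swaps, these are exactly the non-projection binary elements of that smaller minion.
\end{itemize}
Your two symmetric patterns, an odd $k<n/2$ and an odd $k$ with $\lceil n/2\rceil\le k\le\beta(n,3)$, do the same job. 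So your argument goes through once you make three changes: replace $\minion{G}$ by this smaller minion, use $b_3\wedge c\wedge\ell$ in the homomorphism, and drop the search for $(x,y,x)$.
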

\begin{proof}
  A homomorphism from the product to $\minion{G}$ is provided by the map $(\ell,c,b_3)\mapsto (\ell, c, b_3\wedge c\wedge \ell)$, which is a minion homomorphism by Lemmas \ref{binary-minion-homomorphism} and \ref{bi-mensi}.

Let us take $n\equiv 1\pmod 4$ sufficiently large so that $\beta(n,3) - \lfloor\frac{n}{2}\rfloor \geq 2$.

Consider $n$-ary functions $f= (\ell, c, b_3)$  that are fully symmetric.
From $f$ being symmetric we have that $\ell$ is sum of all variables and $c$ is majority of all variables. Because $\beta(n,3) - \lfloor\frac{n}{2}\rfloor \geq 2$ we have for all $i\leq \lceil\frac{n}{2}\rceil+1$ that $b_3(1_i)= 0$.

\begin{center}
\begin{tabular}{ |ccc|l } 
 \hline
 $\ell$ & $c$ & $b_3$ & number of ones \\ 
  \hline
     $1$ & $1$ & $1$ & $n$ \\ 
\multicolumn{3}{|c|}{$\cdots$} & \color{gray}$\longleftarrow\beta(n,3)$ \\
     $0$ & $1$ & $0$ & $\lceil\frac{n}{2}\rceil +1$ \\
     $1$ & $1$ & $0$ & $\lceil\frac{n}{2}\rceil$ \\
     $0$ & $0$ & $0$ & $\lfloor\frac{n}{2}\rfloor$ \\
     $1$ & $0$ & $0$ & $\lfloor\frac{n}{2}\rfloor -1$ \\ 
\multicolumn{3}{|c|}{$\cdots$} \\
     $0$ & $0$ & $0$ & even \\
     $1$ & $0$ & $0$ & odd \\
\multicolumn{3}{|c|}{$\cdots$} \\
     $0$ & $0$ & $0$ & $0$\\
 \hline
\end{tabular}
\end{center}

Using this table, we may see  
\begin{align*}
f(x_{\lceil n/2 \rceil +1}) &= (yx{\wedge}) \\
f(x_{\lceil n/2 \rceil}) &= (xx{\wedge})
\end{align*}

The right-hand sides include all non-projection binary functions in $\minion{G}$.

There exists $f\in\C{\Lc,\Cc^\leq,\Bc_3}$ satisfying these identities, for example $f=(\ell,c,b_3)$, where
\begin{itemize}
\item $\ell$ is sum of the three variables modulo $2$.
\item $c$ is majority on the three variables.
\item $b_3$ is $\atleast{n}$.
\end{itemize}
\end{proof}

\begin{lemma}\label{cb2} $\C{\Cc,\Bc_2}$ has core $\minion{G}$ consisting of pairs of functions $(c,b_2)$ satisfying $b_2 \leq c$.
\end{lemma}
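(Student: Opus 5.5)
We follow the same strategy as in the preceding lemmas. First, the map $(c,b_2)\mapsto(c,b_2\wedge c)$ is a minion homomorphism from $\C{\Cc,\Bc_2}$ to $\minion{G}$: this follows from Lemma~\ref{binary-minion-homomorphism}, and Lemma~\ref{bi-mensi} guarantees $b_2\wedge c\in\Bc_2$. The inclusion $\minion{G}\to\C{\Cc,\Bc_2}$ gives the other direction. It remains to show that $\minion{G}$ is a minion core. By Lemma~\ref{proof-minion} and the discussion before Lemma~\ref{lcleqbjbi}, it suffices to find height-one identities $(*)$ with two properties: every function of $\minion{G}$ satisfying them has all non-projection binary members of $\minion{G}$ among its minors, and some function of $\minion{G}$ satisfies them.

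We first list the binary members of $\minion{G}$. A binary self-dual idempotent function is a projection, so $c\in\{x,y\}$. The binary functions in $\Bc_2$ are $x$, $y$ and $x\wedge y$ (indeed $x\vee y$ fails $\relation{R}_2$ on the tuples $10$ and $01$). With the constraint $b_2\le c$, the non-projection pairs are $(x,\wedge)$ and $(y,\wedge)$, which are swaps of each other. So we need a single minor of the form $(x,\wedge)$ or $(y,\wedge)$.

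For $(*)$ we take ternary functions $f=(c,b_2)$ that are fully symmetric, as in Lemma~\ref{lb2}. Symmetry together with self-duality of $c$ forces $c(1_1)$ to equal a constant $p$ on all inputs with one $1$, and $c(1_2)=\bar p$ on all inputs with two $1$s. Since $\beta(3,2)=1$, the $\beta$-lemma gives $b_2(1_1)=0$. For $b_2(1_2)$, the three inputs $110,101,011$ have no common $1$-coordinate, so compatibility with $\relation{R}_2$ (already with two of them) gives $b_2(1_2)=0$; alternatively this follows from $\beta(3,2)$ after swapping roles. We then read off the minor $f(x_1)=f(xyy)$. Its $c$-component equals $x$ if $p=1$ and $y$ if $p=0$, since the value at input $(1,0)$ is $c(100)=p$. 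Its $b_2$-component is $x\wedge y$, because it vanishes at $10$ and $01$ and is idempotent. Hence the minor is $(x,\wedge)$ or $(y,\wedge)$, and either one yields both pairs up to swapping variables. Combined with Lemma~\ref{projtoproj} for the projections, every binary member of $\minion{G}$ lies in the image of every endomorphism.

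Finally, we need a witness in $\minion{G}$: take $c$ to be the ternary majority and $b_2=\atleast{3}$, the ternary meet. Both are symmetric, $b_2\le c$, and $\atleast{3}\in\Bc_2$ by the $\atleast{b}$ lemma since $3>\beta(3,2)$. The main point needing care is the case analysis on $p$, because $c$ is only self-dual and not necessarily monotone, so $p$ cannot be fixed in advance. Both values of $p$ still produce a non-projection binary function, which is why the argument goes through without fixing $c$ to be the majority.
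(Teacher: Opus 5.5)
Your homomorphism $(c,b_2)\mapsto(c,b_2\wedge c)$, your reduction to Lemma~\ref{proof-minion}, and your list of binary members of $\minion{G}$ are all correct. The forcing step, however, has a genuine gap: the claim that a fully symmetric ternary $b_2\in\Bc_2$ satisfies $b_2(1_2)=0$ is false. Since $\relation{R}_2$ is binary, compatibility only requires every \emph{pair} of tuples in $b_2^{-1}(1)$ to share a $1$-coordinate, and any two of $110,101,011$ do share one. Indeed, the ternary majority is $\atleast{2}$ with $2>\beta(3,2)=1$, so it lies in $\Bc_2$ by the paper's own $\atleast{b}$ lemma. The $\beta$-lemma only covers $b\le\beta(3,2)=1$. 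Swapping $k$- and $(n-k)$-pseudosymmetry does not turn a statement about $f(1_1)$ into one about $f(1_2)$.

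Consequently $(\mathrm{maj},\mathrm{maj})$ lies in $\minion{G}$ and is fully symmetric, yet its minors $f(xyy)=(y,y)$ and $f(xxy)=(x,x)$ are projections. This is exactly the case $p=0$, $b_2(1_2)=1$: the constraint $b_2\le c$ permits it, and your case analysis on $p$ misses it. So ternary full symmetry does not force $(x,\wedge)$ or $(y,\wedge)$ into the image of an arbitrary endomorphism, and the core argument does not go through.

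The paper fixes this with $5$-ary functions and an extra identity. More variables alone would not help, since $(\mathrm{maj},\mathrm{maj})$ is fully symmetric and lies in $\minion{G}$ in every odd arity. Full symmetry together with $\beta(5,2)=2$ gives $b_2(1_1)=b_2(1_2)=0$. The identity $f(x_3)=f(x_1)$ then transfers this to $b_2(1_3)=b_2(1_4)=0$, and it also forces $c(1_3)=c(1_1)=p$ and $c(1_2)=c(1_4)=\bar p$. Hence $f(x_2)$ is $(x,\wedge)$ or $(y,\wedge)$ for either value of $p$. The witness must change as well, because the majority violates $f(x_3)=f(x_1)$ (it gives $x$ versus $y$). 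The paper instead takes $c$ to be the parity of the five variables, which is self-dual because $5$ is odd, together with $b_2=\atleast{5}$.
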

\begin{proof}
  A homomorphism from the product to $\minion{G}$ is provided by the map $(c,b_2)\mapsto (c, b_2\wedge c)$, which is a minion homomorphism by Lemmas \ref{binary-minion-homomorphism} and \ref{bi-mensi}.

We consider $5$-ary functions $f= (c, b_2)$ that satisfies the following identities
\begin{itemize}
    \item $f$ is fully symmetric
    \item $f(x_3) = f(x_1)$
\end{itemize}
Because $2\leq \beta(5,2)$ we get that $b_2(1_1) = b_2(1_2) = 0$ and using (2) we get $b_2(1_3) = b_2(1_4) = 0$.

\begin{center}
\begin{tabular}{ |cc|l } 
 \hline
 $c$ & $b_2$ & number of ones \\ 
  \hline
     $1$ & $1$ & $5$ \\ 
     $\bar{p}$ & $0$ & $4$ \\ 
     $p$ & $0$ & $3$ \\ 
     $\bar{p}$ & $0$ & $2$ \\ 
     $p$ & $0$ & $1$ \\ 
     $0$ & $0$ & $0$ \\ 
 \hline
\end{tabular}
\end{center}

Using this table, we obtain
$$f(x_2) = \left\{\begin{matrix}(x{\wedge}) \\ (y{\wedge})\end{matrix}\right.$$
The right-hand sides include all non-projection binary functions in $\minion{G}$.

Finally, there exists $f\in\C{C,B_2}$ satisfying these identities, for example $f=(c,b_2)$, where
\begin{itemize}
\item $c$ returns the number of ones in the input modulo 2.
\item $b_2$ is $\atleast{5}$.
\end{itemize}
\end{proof}

\begin{lemma}\label{cbleqinf} $\C{\Cc,\Bc_\infty^\leq}$ has core $\minion{G}$ consisting of pairs of functions $(c,b_\infty)$ satisfying $b_\infty \leq c$.
\end{lemma}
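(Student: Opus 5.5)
The plan follows the same scheme as the previous core computations. I first construct a minion homomorphism from $\C{\Cc,\Bc_\infty^\leq}$ to $\minion{G}$. The inclusion $\minion{G}\to\C{\Cc,\Bc_\infty^\leq}$ gives the other direction. Finally, I verify that $\minion{G}$ is a minion core via Lemma~\ref{proof-minion}, using a single set of height-one identities.

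\emph{Homomorphism into $\minion{G}$.} Unlike Lemma~\ref{cb2}, I cannot simply replace $b_\infty$ by $b_\infty\wedge c$. Here $c$ need not be monotone, so $b_\infty\wedge c$ may leave $\Bc_\infty^\leq$. Instead I would keep $b_\infty$ and modify the first coordinate, mapping
$$(c,b_\infty)\mapsto \big((c\vee b_\infty)\wedge b_\infty^*,\ b_\infty\big).$$
By Lemma~\ref{dual-minion-homomorphism}, the map $f\mapsto f^*$ composed with the second projection is a minion homomorphism. Two applications of Lemma~\ref{binary-minion-homomorphism} (with ${\rm bin}=\vee$, then ${\rm bin}=\wedge$) then show that the displayed map is a minion homomorphism.

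It remains to check that the image lies in $\minion{G}$. The first coordinate belongs to $\Cc$ by rerunning the proof of Lemma~\ref{c-mensi}. That proof uses $a\in\Ac$ only to exclude the case $a(x)=a(\bar x)=1$. This case is also impossible for $b_\infty\in\Bc_\infty$: if $b_\infty(x)=b_\infty(\bar x)=1$, some coordinate would equal $1$ in both $x$ and $\bar x$. For the inequality, suppose $b_\infty(x)=1$. Then $b_\infty(\bar x)=0$, so $b_\infty^*(x)=1$, and also $(c\vee b_\infty)(x)=1$. Hence $b_\infty\le (c\vee b_\infty)\wedge b_\infty^*$.

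\emph{Binary members of $\minion{G}$.} Idempotent self-dual binary functions are exactly the projections. Binary members of $\Bc_\infty^\leq$ are $x$, $y$ and $x\wedge y$. Imposing $b_\infty\le c$ leaves the pairs $(x,x)$, $(y,y)$, $(x,{\wedge})$ and $(y,{\wedge})$. The projections lie in every image by Lemma~\ref{projtoproj}. So it suffices to find identities $(*)$, satisfiable in $\minion{G}$, that force $(x,{\wedge})$ and $(y,{\wedge})$ to be minors.

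\emph{Identities.} I take ternary $f=(c,b_\infty)$ required to be fully symmetric. Then $c$ is a symmetric self-dual idempotent ternary function, hence majority or minority. For $b_\infty$: if $b_\infty(110)=1$, symmetry would put $110,101,011$ all in $b_\infty^{-1}(1)$, and these have no common coordinate. So $b_\infty$ is $\atleast{3}$, i.e.\ the ternary meet. Consequently $f(xyy)$ equals $(y,{\wedge})$ or $(x,{\wedge})$, according as $c$ is majority or minority. In either case $f(yxx)$ yields the other pair. As witness I take $c$ the majority and $b_\infty$ the meet of three variables; this lies in $\minion{G}$ since meet $\le$ majority. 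Lemma~\ref{proof-minion} then finishes the proof. The main obstacle is the homomorphism step, namely noticing that $\Bc_\infty$ functions never take value $1$ on both $x$ and $\bar x$, so that Lemma~\ref{c-mensi} applies; the core verification is routine.
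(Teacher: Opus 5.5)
Your proof is correct. The core-verification half matches the paper's argument. Both use ternary fully symmetric $f$, which forces $b_\infty$ to be the ternary meet and $c$ to be majority or minority, so $f(xyy)$ and $f(yxx)$ yield $(x,\wedge)$ and $(y,\wedge)$. The paper's witness uses parity for $c$ rather than majority, which is immaterial.

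The real difference is the homomorphism into $\minion{G}$, and here your route is the one that works. The paper uses $(c,b_\infty)\mapsto(c,b_\infty\wedge c)$ and cites Lemma~\ref{bi-less-mensi}. That lemma requires the second function to be monotone, and $c\in\Cc$ need not be. For example, take $c=x+y+z$ and $b_\infty=x$. Then $b_\infty\wedge c=x\wedge(x+y+z)$ is $1$ at $(1,0,0)$ and $0$ at $(1,1,0)$. So the paper's map leaves $\Bc_\infty^\leq$ and does not land in $\minion{G}$, and your objection is well founded.

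Your alternative keeps $b_\infty$ and replaces $c$ by $(c\vee b_\infty)\wedge b_\infty^*$. This is justified by rerunning Lemma~\ref{c-mensi}, with the observation that no $b$ compatible with $R_2$ (in particular no $b\in\Bc_\infty^\leq$) has $b(x)=b(\bar x)=1$. It gives a genuine minion homomorphism into $\minion{G}$, and you check the inequality $b_\infty\le(c\vee b_\infty)\wedge b_\infty^*$ correctly. The cost is a slightly longer verification; what it buys is a correct argument.

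The same defect occurs in Lemma~\ref{cbjbi}, and the same repair works there: $(c,b_j,b_i)\mapsto\bigl((c\vee b_j)\wedge b_j^*,\,b_j,\,b_i\wedge b_j\bigr)$, using Lemma~\ref{bi-mensi} for the last coordinate.
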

\begin{proof}
  A homomorphism from the product to $\minion{G}$ is provided by the map $(c,b_\infty) \mapsto (c,b_\infty \wedge c)$, which is a minion homomorphism by Lemmas \ref{binary-minion-homomorphism} and \ref{bi-less-mensi}. 

We consider $3$-ary fully symmetric functions $f= (c, b_\infty)$. 
Because $2 =  \beta(3,\infty)$ we get that $b_2(1_1) = b_2(1_2) = 0$. 

\begin{center}
\begin{tabular}{ |cc|l } 
 \hline
 $c$ & $b_\infty$ & number of ones \\ 
  \hline
     $1$ & $1$ & $3$ \\ 
     $\bar{p}$ & $0$ & $2$ \\ 
     $p$ & $0$ & $1$ \\ 
     $0$ & $0$ & $0$ \\ 
 \hline
\end{tabular}
\end{center}

Using this table, we obtain
$$f(x_2) = \left\{\begin{matrix}(x{\wedge}) \\ (y{\wedge})\end{matrix}\right.$$
The right-hand sides include all non-projection binary functions in $\minion{G}$.

Finally, there exists $f\in\C{\Cc,\Bc_\infty^\leq}$ satisfying these identities, for example $f=(c,b_\infty)$, where
\begin{itemize}
\item $c$ returns the number of ones in the input modulo 2.
\item $b_\infty$ is $\atleast{3}$.
\end{itemize}
\end{proof}

\begin{lemma}\label{cleqb3} $\C{\Cc^\leq,\Bc_3}$ has core $\minion{G}$ consisting of pairs of functions $(c,b_3)$ satisfying $b_3 \leq c$.
\end{lemma}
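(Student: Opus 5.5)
We follow the same strategy as in the preceding lemmas. For the direction $\C{\Cc^\leq,\Bc_3}\to\minion{G}$ we use the map $(c,b_3)\mapsto(c,b_3\wedge c)$. By Lemma~\ref{binary-minion-homomorphism} (applied to the two coordinate projections and ${\rm bin}=\wedge$) it is a minion homomorphism. By Lemma~\ref{bi-mensi}, $b_3\wedge c\in\Bc_3$, and trivially $b_3\wedge c\le c$, so its image lies in $\minion{G}$. Conversely, $\minion{G}\subseteq\C{\Cc^\leq,\Bc_3}$, so the inclusion is a homomorphism in the other direction. It remains to show, via Lemma~\ref{proof-minion} and Lemma~\ref{projtoproj}, that every non-projection binary pair of $\minion{G}$ lies in the image of every endomorphism.

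First we list the non-projection binary members of $\minion{G}$. A binary function of $\Cc^\leq$ is a projection. A binary $b_3\in\Bc_3$ is idempotent with $b_3(01)=1$ and $b_3(10)=1$ impossible only if $R_3$ fails; here the three tuples $01,10,11$ do not share a coordinate, so at most one of $b_3(01),b_3(10)$ equals $1$. Hence $b_3\in\{x,y,\wedge\}$. Imposing $b_3\le c$ with $c\in\{x,y\}$, the non-projection pairs are $(x,\wedge)$ and $(y,\wedge)$; swapping $x,y$ reduces these to a single one up to minors.

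Next we reuse the fully symmetric identities from Lemma~\ref{lcleqb3}. Take $n$ odd and large, say $n\equiv 1\pmod 4$ with $\beta(n,3)-\lfloor\frac n2\rfloor\ge 2$, and consider $n$-ary $f=(c,b_3)$ that are fully symmetric (all $k$-pseudosymmetric). Then $c$ is the majority, by the lemma on $\lceil n/2\rceil$-pseudosymmetric members of $\Cc^\leq$. Also $b_3(1_k)=0$ for all $k\le\beta(n,3)$, in particular for $k=\lceil n/2\rceil$, by the lemma on $b$-pseudosymmetric functions compatible with $\relation{R}_k$. Reading off the minor $f(x_{\lceil n/2\rceil})$: its $c$-component is $x$, and its $b_3$-component is $0$ at $(1,0)$ and at $(0,1)$ (the latter being $b_3(1_{\lfloor n/2\rfloor})=0$), and $1$ at $11$. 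So $f(x_{\lceil n/2\rceil})=(x,\wedge)$, and $f(y_{\lceil n/2\rceil})=(y,\wedge)$.

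Finally, a witness in $\minion{G}$ is $c$ the majority on all $n$ variables and $b_3=\atleast{n}$ (the full conjunction). Then $b_3\le c$ holds, $b_3\in\Bc_3$ by the lemma on $\atleast{b}$ since $n>\beta(n,3)$, and both components are symmetric. The only delicate step is the classification of binary members of $\minion{G}$, which we settle by the $R_3$ argument above; the rest follows the template verbatim.
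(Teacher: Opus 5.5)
Your proof is correct and follows the paper's argument essentially verbatim. It uses the same homomorphism $(c,b_3)\mapsto(c,b_3\wedge c)$, the same fully symmetric $n$-ary identities giving the minor $f(x_{\lceil n/2\rceil})=(x,\wedge)$ (and $(y,\wedge)$ by swapping $x,y$), and the same witness, namely the majority together with $\atleast{n}$. Your explicit classification of the binary members of $\minion{G}$ via $\relation{R}_3$ fills in a step the paper only asserts, and your stronger choice of $n$ is harmless.
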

\begin{proof}
  A homomorphism from the product to $\minion{G}$ is provided by the map $(c,b_3)\mapsto (c,b_3\wedge c)$, which is a minion homomorphism by Lemmas \ref{binary-minion-homomorphism} and \ref{bi-mensi}.

Let us take odd $n$  sufficiently large so that $\beta(n,3)-\lfloor\frac{n}{2}\rfloor\geq 1$.

Consider $n$-ary fully symmetric functions $f= (c, b_3)$.
Because $\beta(n,3) \geq \lceil\frac{n}{2}\rceil + 1$ we get that $b_3(1_{\lceil\frac{n}{2}\rceil}) = 0$. Because $c$ is fully symmetric, we have that $c$ is majority on all variables.

\begin{center}
\begin{tabular}{ |cc|l } 
 \hline
 $c$ & $b_3$ & number of ones \\ 
  \hline
     $1$ & $1$ & $n$ \\ 
     \multicolumn{2}{|c|}{$\cdots$} & \color{gray}$\longleftarrow\beta(n,3)$ \\
     $1$ & $0$ & $\lceil\frac{n}{2}\rceil$ \\
     $0$ & $0$ & $\lfloor\frac{n}{2}\rfloor$ \\
    \multicolumn{2}{|c|}{$\cdots$} \\
     $0$ & $0$ & $1$ \\ 
     $0$ & $0$ & $0$ \\ 
 \hline
\end{tabular}
\end{center}

Using this table, we obtain $f(x_{\lceil\frac{n}{2}\rceil}) = (x{\wedge})$
The right-hand sides include all non-projection binary functions in $\minion{G}$.

Finally, there exists $f\in\C{\Cc^\leq,\Bc_3}$ satisfying these identities, for example $f=(c,b_3)$, where
\begin{itemize}
\item $c$ is majority on all variables.
\item $b_3$ is $\atleast{n}$.
\end{itemize}
\end{proof}

\begin{lemma}\label{ca} $\C{\Cc,\Ac}$ has core $\minion{G}$ consisting of pairs of functions $(c,a)$ satisfying $a \leq c$.
\end{lemma}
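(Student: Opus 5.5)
We follow the same two-part strategy as the preceding lemmas. First we exhibit minion homomorphisms in both directions between $\C{\Cc,\Ac}$ and $\minion{G}$. Then we show that $\minion{G}$ is a minion core via Lemma~\ref{proof-minion}, using a single symmetric identity.

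\emph{Homomorphism into $\minion{G}$.} Define
$$(c,a)\mapsto \big((c\vee a)\wedge a^*,\ a\big).$$
\begin{itemize}
\item The second component is the projection onto the $\Ac$-coordinate, which is a minion homomorphism.
\item For the first component, $f\mapsto a^*$ is a minion homomorphism by Lemma~\ref{dual-minion-homomorphism} composed with that projection. Lemma~\ref{binary-minion-homomorphism} then shows that $f\mapsto c\vee a$ and hence $f\mapsto (c\vee a)\wedge a^*$ are minion homomorphisms. Its values lie in $\Cc$ by Lemma~\ref{c-mensi}.
\item It remains to check that the image lies in $\minion{G}$, i.e.\ that $a\le (c\vee a)\wedge a^*$. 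We have $a\le c\vee a$ trivially. Every member of $\Ac$ is a conjunction $\bigwedge_{i\in S}x_i$ over a nonempty set $S$ of variables, so its dual is $\bigvee_{i\in S}x_i$, and $a\le a^*$ holds pointwise.
\end{itemize}
This verification is the only genuinely new ingredient and the step most likely to hide a problem. It uses the explicit description of $\Ac$, which guarantees $a\le a^*$. The homomorphism in the other direction is the inclusion $\minion{G}\subseteq\C{\Cc,\Ac}$, so $\C{\Cc,\Ac}\sim\minion{G}$.

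\emph{$\minion{G}$ is a core.} First we list the binary members of $\minion{G}$.
\begin{itemize}
\item A binary idempotent self-dual function is a projection, since $c(01)$ and $c(10)$ must differ. So the $\Cc$-component is $x$ or $y$.
\item The binary members of $\Ac$ are $x$, $y$ and $x\wedge y$.
\item Imposing $a\le c$ leaves the projections $(x,x)$, $(y,y)$ and the two non-projections $(x,{\wedge})$, $(y,{\wedge})$. These two are obtained from each other by swapping variables.
\end{itemize}
Take as $(*)$ the identities saying that the ternary $f=(c,a)$ is fully symmetric. A symmetric ternary member of $\Ac$ is $x\wedge y\wedge z$. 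A symmetric idempotent self-dual ternary function is either the majority or the minority. Hence the minor $f(xyy)$ equals either $(y,{\wedge})$ or $(x,{\wedge})$. In either case it is a non-projection binary function of $\minion{G}$, and swapping the variables yields the other one.

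A witness in $\minion{G}$ is $f=(\text{majority},\,x\wedge y\wedge z)$, since the meet lies below the majority. As $\minion{G}$ is idempotent, Lemma~\ref{projtoproj} shows that every endomorphism fixes the projections. By the argument preceding Lemma~\ref{lcleqbjbi}, every binary function of $\minion{G}$ is therefore in the image of every endomorphism. Lemma~\ref{proof-minion} then shows that $\minion{G}$ is a minion core.
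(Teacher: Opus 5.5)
Your proposal is correct and follows essentially the same route as the paper: the same map $(c,a)\mapsto((c\vee a)\wedge a^*,a)$ into $\minion{G}$ with inclusion back, and the fully symmetric ternary identity whose minor $f(xyy)$ is $(x,\wedge)$ or $(y,\wedge)$, with witness $(\text{majority},\,x\wedge y\wedge z)$. Your explicit check that the image actually lands in $\minion{G}$ (via $a\le a^*$ for $a\in\Ac$) is a detail the paper leaves implicit, so it is a welcome addition rather than a different approach.
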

\begin{proof}
  A homomorphism from the product to $\minion{G}$ is provided by the map $(c,a)\mapsto ((c\vee a)\wedge a^*,a)$, which is a minion homomorphism by Lemmas \ref{binary-minion-homomorphism} and \ref{c-mensi}.
  
We consider $3$-ary fully symmetric functions $f= (c, a)$. 
Because of the symmetry we have that $a$ is and of all variables.

\begin{center}
\begin{tabular}{ |cc|l } 
 \hline
 $c$ & $a$ & number of ones \\ 
  \hline
     $1$ & $1$ & $3$ \\ 
     $\bar{p}$ & $0$ & $2$ \\ 
     $p$ & $0$ & $1$ \\ 
     $0$ & $0$ & $0$ \\ 
 \hline
\end{tabular}
\end{center}

Using this table, we obtain
$$f(x_2) = \left\{\begin{matrix}(x{\wedge}) \\ (y{\wedge})\end{matrix}\right.$$
The right-hand sides include all non-projection binary functions in $\minion{G}$.

There exists $f\in\C{\Cc,\Ac}$ satisfying these identities, for example $f=(c,a)$, where
\begin{itemize}
\item $c$ is majority on all variables.
\item $a$ is and on all variables.
\end{itemize}
\end{proof}

Using exactly the same identities as in the previous lemma, we obtain
the following corollary.

\begin{corollary}\label{cacor}
  \hfill
    \begin{itemize}
        \item $\C{\Ac}$ is a core.
    \end{itemize}
\end{corollary}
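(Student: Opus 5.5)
The statement is Corollary~\ref{cacor}: $\C{\Ac}$ is a core. I would follow the same template as Lemma~\ref{ca} and reuse its identities. Here the candidate core is $\C{\Ac}$ itself, so no homomorphism into a smaller minion needs to be built: the identity map witnesses $\C{\Ac}\sim\C{\Ac}$. It therefore remains to show that $\C{\Ac}$ is a minion core, and by Lemma~\ref{proof-minion} it suffices to show that every binary function of $\Ac$ lies in the image of every endomorphism $\alpha$.

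The binary functions of $\Ac$ are easy to list. $\Ac$ is generated by $\wedge$, so its members are exactly the meets of nonempty sets of variables. The binary members are therefore $x$, $y$ and $x\wedge y$.

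The two projections are in the image of $\alpha$ by Lemma~\ref{projtoproj}, since $\Ac$ is idempotent. For the remaining function $x\wedge y$, I would use the condition from Lemma~\ref{ca}: let $f$ be a ternary function satisfying the height-one identities expressing full symmetry, $f(xyz)=f(\pi(xyz))$ for every permutation $\pi$. The witness $f(xyz)=x\wedge y\wedge z$ lies in $\Ac$. Since $\alpha$ preserves height-one identities, $\alpha(f)$ is a fully symmetric ternary member of $\Ac$.

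The only fully symmetric meet of variables is the meet of all three, because a meet over a proper nonempty subset of the variables is not invariant under permutations moving a variable out of that subset. Hence $\alpha(f)=x\wedge y\wedge z$. Its minor $f(x_2)=f(xxy)$ equals $x\wedge y$, and minors of elements of the image of $\alpha$ stay in the image. So $x\wedge y$ is in the image of $\alpha$, and Lemma~\ref{proof-minion} finishes the proof.

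The only real step is the description of the members of $\Ac$, in particular that the binary and symmetric ternary ones are as claimed. This follows directly from $\Ac$ being generated by the associative, commutative and idempotent operation $\wedge$, so I do not expect any step to be a genuine obstacle.
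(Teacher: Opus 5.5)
Your proof is correct and follows the paper's route. The paper obtains this corollary by reusing the fully symmetric ternary identities from Lemma~\ref{ca}, whose minor $f(x_2)=f(xxy)$ gives $x\wedge y$, together with Lemma~\ref{projtoproj} for the projections and Lemma~\ref{proof-minion}. You have spelled out the details the paper leaves implicit: that members of $\Ac$ are meets of nonempty sets of variables, and that the ternary meet is the only fully symmetric ternary member.
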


Instead of just one function symbol $f$, the next lemma uses two function symbols, $f$ and $g$, each with its own height-one identities. The conclusion then follows in the same way: since the minors of $f$ and $g$ contain all nontrivial binary functions of $\minion{G}$, all of these functions also belong to the image of every minion endomorphism.

\begin{lemma}\label{lcleqa} $\C{\Lc,\Cc^\leq,\Ac}$ has core $\minion{G}$ consisting of triples of functions $(\ell,c,a)$ satisfying $a\leq c$.
\end{lemma}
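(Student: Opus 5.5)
The plan follows the scheme used for Lemmas~\ref{lcleqbleqinf} and~\ref{ca}. First I build a homomorphism $\C{\Lc,\Cc^\leq,\Ac}\to\minion{G}$. The other direction is the inclusion $\minion{G}\subseteq\C{\Lc,\Cc^\leq,\Ac}$. Finally I check that $\minion{G}$ is a core using Lemma~\ref{proof-minion}.

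\textbf{The homomorphism into $\minion{G}$.} The obvious choice $a\mapsto a\wedge c$ does not work, because $a\wedge c$ need not lie in $\Ac$. So I correct the $\Cc^\leq$-component instead, as in Lemma~\ref{ca}:
$$(\ell,c,a)\mapsto\bigl(\ell,\ (c\vee a)\wedge a^*,\ a\bigr).$$
This is a minion homomorphism by Lemmas~\ref{binary-minion-homomorphism} and~\ref{dual-minion-homomorphism}. We must check that the middle component lies in $\Cc^\leq$ and dominates $a$.
\begin{itemize}
\item It is self-dual by Lemma~\ref{c-mensi}.
\item It is monotone, since $c$, $a$ and $a^*$ are all monotone.
\item It is idempotent, since $c$ and $a$ are.
\item Write $a$ as the meet of a nonempty set $S$ of variables. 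Then $a^*$ is the join of the same set $S$, so $a\le a^*$. Together with $a\le c\vee a$ this gives $a\le (c\vee a)\wedge a^*$.
\end{itemize}

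\textbf{The binary part of $\minion{G}$.} The binary members of $\Lc$ and of $\Cc^\leq$ are only the projections. The binary members of $\Ac$ are $x$, $y$ and $x\wedge y$. Impose $a\le c$; if $a$ is a projection this forces $a=c$. Up to swapping $x$ and $y$, the non-projection binary triples are therefore $(yxx)$, $(yx{\wedge})$ and $(xx{\wedge})$.

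\textbf{The single identity system.} I would use $9$-ary $f=(\ell,c,a)$ satisfying:
\begin{enumerate}
\item $f$ is $(5,6)$-pseudosymmetric;
\item $f(x_7)=x$.
\end{enumerate}
I expect one function symbol to suffice, even though the paper announces two.

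From $5$-pseudosymmetry, $\ell$ is the sum of all variables and $c$ is the majority of all variables, by the two pseudosymmetry lemmas.

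For the $\Ac$-component, write $a=\bigwedge_{s\in S}x_s$. From (2) we get $S\subseteq\{1,\dots,7\}$. For any nonempty $S$, some $6$-element set of positions misses an element of $S$, so $a$ takes the value $0$ there. By $6$-pseudosymmetry, $a$ is then $0$ on every input with $6$ ones, and by monotonicity on every input with $5$ ones as well.

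Reading off the minors:
\begin{align*}
f(x_8)&=(yxx),\\
f(x_6)&=(yx{\wedge}),\\
f(x_5)&=(xx{\wedge}).
\end{align*}
For $f(x_8)$, the value $a=x$ uses $S\subseteq\{1,\dots,7\}$. The parities for $\ell$ come from $8x+y$, $6x+3y$ and $5x+4y$ modulo $2$. These cover all non-projection binary members of $\minion{G}$ up to swapping $x$ and $y$.

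\textbf{Existence and conclusion.} A witness is $\ell=$ sum of all variables, $c=$ majority of all variables, and $a=$ meet of the first seven variables. Here $a\le c$ holds because $a=1$ forces at least $7$ ones. Both pseudosymmetry conditions hold because $a$ vanishes whenever there are at most $6$ ones. Condition (2) holds: $7x+2y=x$, the majority is $x$, and the meet of the first seven variables is $x$.

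Projections lie in every image by Lemma~\ref{projtoproj}, since all components are idempotent. Lemma~\ref{proof-minion} then shows that $\minion{G}$ is a core.

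The main obstacle is the first step, finding a correction that stays inside $\Ac$. Modifying $c$ via Lemma~\ref{c-mensi}, rather than modifying $a$, resolves it.
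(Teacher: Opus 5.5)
Your proof is correct, and it differs from the paper's in how the core property is certified. Your homomorphism $(\ell,c,a)\mapsto(\ell,(c\vee a)\wedge a^*,a)$ and your list of the binary members of $\minion{G}$ coincide with the paper's. Your explicit checks of monotonicity, idempotency and $a\le a^*$ spell out what the paper leaves implicit when it cites Lemma~\ref{c-mensi}.

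The paper uses two function symbols:
\begin{itemize}
\item a $5$-ary fully symmetric $f$, as in Lemma~\ref{ca}, whose minors $f(x_3)=(xx{\wedge})$ and $f(x_4)=(yx{\wedge})$ give two of the three binary types;
\item a separate $7$-ary $g$, block symmetric in the first four and the last three variables, with $g(xxxx\,xyy)=g(xxxx\,xxx)$. Its only purpose is to produce $(yxx)=g(xxxx\,xxy)$.
\end{itemize}

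You instead reuse verbatim the $9$-ary system of Lemma~\ref{lcleqbleqinf}, and the verification goes through. From $f(x_7)=x$ you get $S\subseteq[7]$. From $6$-pseudosymmetry and $S\neq\emptyset$, $a$ vanishes on every input with exactly six ones, so no $6$-set contains $S$, i.e.\ $|S|\ge 7$. Hence the identities actually force $a$ to be the meet of the first seven variables. This makes $a(x_8)=x$ and $a(x_6)=a(x_5)=x\wedge y$ immediate, including vanishing on the inputs with $3$ and $4$ ones that are needed when evaluating at $(0,1)$. Your monotonicity remark also covers those inputs. Your witness lies in $\Ac$ and satisfies $a\le c$, so one symbol suffices.

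What your route buys is economy and uniformity. It shows that $\C{\Lc,\Cc^\leq,\Ac}$ and $\C{\Lc,\Cc^\leq,\Bc_\infty^\leq}$ are certified by the same identities, reflecting that $\Ac\subseteq\Bc_\infty^\leq$ and that the $\Bc_\infty^\leq$-witness can be chosen inside $\Ac$. The paper's split keeps the arities smaller and makes its first system literally the one from Lemma~\ref{ca}, at the cost of an ad hoc second symbol.
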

\begin{proof}
  A homomorphism from the product to G is provided by the map $(\ell, c,a)\mapsto (\ell, (c\vee a)\wedge a^*,a)$, which is a minion homomorphism by Lemmas \ref{binary-minion-homomorphism} and \ref{c-mensi}.

Consider $5$-ary fully symmetric functions $f= (\ell, c, a)$. 
Because of the symmetry we have that $a$ is and of all variables, $c$ is majority on all variables and $\ell$ is sum of all variables.

\begin{center}
\begin{tabular}{ |ccc|l } 
 \hline
 $\ell$ & $c$ & $a$ & number of ones \\ 
  \hline
   $1$ & $1$ & $1$ & $5$ \\ 
   $0$ & $1$ & $0$ & $4$ \\ 
   $1$ &  $1$ & $0$ & $3$ \\ 
   $0$ &  $0$ & $0$ & $2$ \\ 
   $1$ &  $0$ & $0$ & $1$ \\ 
   $0$ &  $0$ & $0$ & $0$ \\ 
 \hline
\end{tabular}
\end{center}

Using this table, we obtain
\begin{align*}
f(x_{3}) &= (xx{\wedge}) \\
f(x_{4}) &= (yx{\wedge})
\end{align*}

The right-hand sides include all non-projection binary functions in $\minion{G}$ without the function $(yxx)$.

Consider $7$-ary functions $g=(\ell,c,a)$, such that they are block symmetric in the last $3$ variables and in the first $4$ variables. Additionally it satisfies $g(xxxx\ xyy) = g(xxxx\ xxx)$. Then $\ell(1111\ 100) = \ell(1111\ 111) = 1$. From the block symmetry we get that $\ell$ is either sum of all variables or sum of the last three variables. In either way we have $\ell(xxxx\ xxy) = x$. Then because $a$ is monotone we have $a(1111\ 110) \geq a(1111\ 100) = a(1111\ 111) = 1$ and because $c\geq a$ we have $c(1111\ 110) = 1$. 
Combining these we have that 
$$g(xxxx\ xxy) = (yxx)$$

The right side is the last non-projection binary function in $\minion{G}$.

Finally such $f$ and $g$ exist in $\minion{G}$. Example of such $f$ is already given by the table and an example $g = (\ell,c,a)$ is 
\begin{itemize}
\item $\ell$ is sum of all variables modulo $2$.
\item $c$ is majority on all variables.
\item $a$ is and on the first $4$ variables.
\end{itemize}
\end{proof}

Using exactly the same identities for $f$ and $g$ as in the previous lemma, we obtain
the following corollary.

\begin{corollary}\label{lcleqacor}
  \hfill
  \begin{itemize}
        \item $\C{\Lc, \Ac}$ is a core.
        \item $\C{\Cc^\leq, \Ac}$ is a core.
    \end{itemize}
\end{corollary}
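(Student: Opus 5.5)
The corollary has two parts, and I will prove each with the strategy of Lemma~\ref{lcleqa}. In both cases the core is the product itself: there is no inequality between the components, so the identity map is the required homomorphism. By Lemma~\ref{proof-minion} it then suffices to show that every binary pair of the product lies in the image of every endomorphism $\alpha$. By Lemma~\ref{projtoproj}, applied sortwise, $\alpha$ fixes the projections $(x,x)$ and $(y,y)$. For $\C{\Lc,\Ac}$ the binary members of $\Lc$ are only $x,y$, and those of $\Ac$ are $x,y,x\wedge y$. So the non-projection binary pairs are $(x,\wedge)$, $(y,\wedge)$, $(x,y)$ and $(y,x)$. The same list arises for $\C{\Cc^\leq,\Ac}$, since a binary idempotent self-dual monotone function is a projection.

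\textbf{The pair $\C{\Lc,\Ac}$.} Take $f=(\ell,a)$ $5$-ary and fully symmetric, as in Lemma~\ref{lcleqa}. Then $\ell$ is the sum and $a$ is the meet of all variables. Reading off the table, $f(x_3)=(x,\wedge)$ and $f(x_4)=(y,\wedge)$. Next take $g=(\ell,a)$ $7$-ary, block symmetric in the blocks of sizes $4$ and $3$, and satisfying $g(xxxx\,xyy)=g(xxxx\,xxx)$.
\begin{itemize}
\item Block symmetry forces $\ell$ to be the sum over a union of blocks with an odd number of variables. Hence $\ell$ is either the sum of the last block or the sum of all variables.
\item Block symmetry forces $a$ to be the meet over a union of blocks. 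The identity gives $a(1111\,100)=1$, so $a$ is the meet of the first four variables.
\item Therefore $g(yyyy\,yyx)=(x,y)$ in either case for $\ell$, and $g(xxxx\,xxy)=(y,x)$.
\end{itemize}
The witness $g=(\text{sum of all},\ \text{meet of first four})$ shows that such a $g$ exists. Applying $\alpha$ preserves these identities, so $\alpha(f)$ and $\alpha(g)$ again have all four pairs among their minors.

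\textbf{The pair $\C{\Cc^\leq,\Ac}$.} The same $f$ gives $(x,\wedge)$ and $(y,\wedge)$, since $c$ is the majority. This is the main obstacle: the mixed pairs $(x,y)$ and $(y,x)$ do not come out of $g$ in the same way. Monotonicity and self-duality together with $c(1111\,100)=1$ give $c(1111\,110)=1$ and $c(0000\,001)=0$. So the minors of $g$ in which the first block and the first two variables of the last block are identified collapse to projections.

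I would first try to replace $g$ by an identity system $h$ that forces $c$ to be a projection onto a coordinate which $a$ ignores. One candidate: $h$ $4$-ary with $h(xyyy)=h(xxxx)$ and with symmetry in the last three variables. Then $a$ must avoid the last block, so $a=x_1$. Then $c$ is self-dual, monotone and symmetric in three variables with $c(1000)=1$, which forces $c=x_1$ again, so this candidate fails and needs to be modified.

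As a fallback, I would argue directly on the six binary pairs. Since $\alpha$ commutes with swapping variables, $\alpha(x,y)$ and $\alpha(y,x)$ are swaps of each other. If $\alpha(x,y)$ were $(x,\wedge)$, I would pull this back to the ternary element $(\pi_1,\pi_2)$ and its images. Minor preservation should then contradict self-duality in the $\Cc^\leq$-coordinate, which is preserved because $\alpha$ commutes with the minor structure and fixes the projections.

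I expect this last step to be the delicate part of the whole proof.
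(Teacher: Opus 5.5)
Your argument for $\C{\Lc,\Ac}$ is correct and is the paper's own: the $5$-ary symmetric $f$ gives $(x,\wedge)$ and $(y,\wedge)$, and the block-symmetric $7$-ary $g$ gives $(y,x)$, with $(x,y)$ obtained by swapping. For $\C{\Cc^\leq,\Ac}$ you have no proof. As you note yourself, the candidate $h$ fails. The ``fallback'' is only a plan, and it considers only $\alpha(x,y)=(x,\wedge)$. It never considers that $\alpha$ might send $(x,y)$ to a projection. Lemma~\ref{projtoproj} says projections are fixed, but it does not stop non-projections from being mapped onto projections. This gap cannot be closed, because the second bullet is false. You spotted exactly where the paper's one-line justification (``exactly the same identities'') breaks down. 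Once the $\ell$-coordinate is removed, $g(xxxx\,xxy)=(x,x)$, and nothing forces the mixed pairs.

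Here is an explicit counterexample. Let $\alpha(c,a)=((c\vee a)\wedge a^*,\,a)$, which is the same map the paper uses in Lemmas~\ref{ca} and~\ref{lcleqa}.
\begin{itemize}
\item It commutes with minors by Lemmas~\ref{binary-minion-homomorphism} and~\ref{dual-minion-homomorphism}.
\item Its first coordinate is idempotent and self-dual by Lemma~\ref{c-mensi}. It is monotone, being a lattice combination of monotone functions. Hence it lies in $\Cc^\leq$, and $\alpha$ is a minion endomorphism of $\C{\Cc^\leq,\Ac}$.
\item Every $a\in\Ac$ is a meet of variables, so $a\le a^*$. Hence every element of the image satisfies $a\le (c\vee a)\wedge a^*$.
\item In particular, $\alpha(x,y)=(y,y)=\alpha(y,y)$, and neither $(x,y)$ nor $(y,x)$ lies in the image.
\end{itemize}
So $\alpha$ is neither injective nor surjective, and $\C{\Cc^\leq,\Ac}$ is not a minion core. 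The correct statement is the analogue of Lemma~\ref{ca}: the core is $\minion{G}=\{(c,a): a\le c\}$. Its only non-projection binary members, $(x,\wedge)$ and $(y,\wedge)$, are minors of the symmetric $f=(c,a)$ with $c$ the majority and $a$ the meet, and this $f$ lies in $\minion{G}$. Your opening heuristic, that the product is its own core because there is no inequality between the factors, is also unsound. $\Cc$ and $\Ac$ are incomparable as well, yet Lemma~\ref{ca} gives $\C{\Cc,\Ac}$ a proper core. What matters is whether some minor-compatible pointwise formula can push one coordinate above another.
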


\begin{lemma}\label{full} The clone $\clone{F}$ consisting of all functions has core $\minion{G}$ consisting of one constant function $c_r$ of each arity $r$, where $c_r$ always returns $0$.
\end{lemma}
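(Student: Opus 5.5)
We need to show that $\clone{F}\sim\minion{G}$ and that $\minion{G}$ is a minion core. We follow the paper's general strategy, but here everything is much simpler. Before starting, note that $\minion{G}$ is indeed a minion. Every minor of the $r$-ary constant $c_r$ is an $m$-ary function that is constantly $0$, that is, $c_m$. So $\minion{G}$ is closed under minors.

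\emph{Homomorphisms in both directions.} Define $\zeta\colon\clone{F}\to\minion{G}$ by sending every $r$-ary function to $c_r$. This preserves arities. It also preserves minors: for $f$ of arity $n$ and indices $i_1,\dots,i_n\in[m]$, both $\zeta(f\circ(\pi_{i_1}^m,\dots,\pi_{i_n}^m))$ and $\zeta(f)\circ(\pi_{i_1}^m,\dots,\pi_{i_n}^m)$ equal $c_m$. This is the first observation of the section on the two-element domain, applied with target $\minion{G}$. In the other direction, the inclusion $\minion{G}\subseteq\clone{F}$ is trivially a minion homomorphism, because minors are computed the same way in both minions. Hence $\clone{F}\sim\minion{G}$.

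\emph{$\minion{G}$ is a minion core.} Let $\alpha$ be an endomorphism of $\minion{G}$. Since $\alpha$ preserves arities and $\minion{G}$ has exactly one function of each arity, $\alpha(c_r)=c_r$ for every $r$. So $\alpha$ is the identity, and in particular an automorphism. Alternatively, one can invoke Lemma~\ref{proof-minion}: the only binary member is $c_2$, and it is fixed by every endomorphism.

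There is no real obstacle here. The only point needing care is that $\minion{G}$ is a minion but not a clone, since it contains no projections. Consequently Lemma~\ref{projtoproj} is not available, and the argument must instead rest on the uniqueness of the function of each arity.
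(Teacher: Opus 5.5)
Your proof is correct, and the two homomorphisms you use, $f\mapsto c_{\mathrm{ar}(f)}$ and the inclusion $\minion{G}\subseteq\clone{F}$, are exactly the paper's. The difference is in how the core property is established.

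The paper stays within its uniform template. Since $c_2$ satisfies $f(xx)=f(yy)$, the image $\alpha(c_2)$ is a binary member of $\minion{G}$ satisfying the same identity, so it equals $c_2$. Hence every binary member of $\minion{G}$ lies in the image of every endomorphism, and Lemma~\ref{proof-minion} (which rests on Lemma~\ref{bin-equal}) finishes the proof.

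Your main argument skips this machinery. A minion homomorphism preserves arity, and $\minion{G}$ has exactly one element of each arity, so every endomorphism is literally the identity. This is more elementary: it needs neither the binary-determination lemma nor any height-one identity, and it works verbatim for the one-element-per-arity minion on any domain. The paper's version only buys uniformity with its other core lemmas. In this degenerate case its identity does no real work; it collapses to your ``alternative'' remark that $c_2$ is the only binary member.

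Your observation that Lemma~\ref{projtoproj} is unavailable is also apt. The step of the paper's template that handles projections is vacuous here, because $\minion{G}$ contains no projections at all.
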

\begin{proof}
A homomorphism from $\clone{F}$ to $\minion{G}$ is provided by the map $g\mapsto c_{{\rm ar}(g)}$, which is a minion homomorphism, since every minor of a constant function is again constant and therefore evaluates to $0$.

Consider binary functions $f\in\minion{G}$ satisfying $f(xx) = f(yy)$. Thus $f = c_2$.
The right-hand side is the only non-projection binary function in $\minion{G}$.

Finally such $f$ exist in $\minion{G}$, namely $f = c_2$.
\end{proof}

\begin{lemma}\label{idem} The clone $\clone{I}$ consisting of all idempotent functions has core $\minion{G}$ consisting of all idempotent functions $g$ satisfying $g\leq g^*$.
\end{lemma}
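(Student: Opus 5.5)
We show $\clone{I}\sim\minion{G}$ via a pair of homomorphisms and then apply the core criterion of Lemma~\ref{proof-minion}, following the strategy used for the previous cores.

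First we check that $\minion{G}$ is a minion. A minor of an idempotent function is idempotent. Taking duals commutes with taking minors, since $(g\circ(\pi_{i_1}^m,\dots,\pi_{i_n}^m))^*=g^*\circ(\pi_{i_1}^m,\dots,\pi_{i_n}^m)$. Pointwise inequalities are also preserved by minors. Hence $g\le g^*$ is inherited by all minors of $g$. Since $\minion{G}\subseteq\clone{I}$, the inclusion map is a minion homomorphism $\minion{G}\to\clone{I}$.

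For the other direction we define $\alpha(g)=g\wedge g^*$. This is a minion homomorphism by Lemma~\ref{binary-minion-homomorphism}, taking $\alpha_1$ to be the identity, $\alpha_2$ the dual map of Lemma~\ref{dual-minion-homomorphism}, and ${\rm bin}=\wedge$. The lemma is stated for products, but its proof only uses that $\alpha_1,\alpha_2$ are minion homomorphisms. The image lies in $\minion{G}$ for two reasons:
\begin{itemize}
\item $g\wedge g^*$ is idempotent, because both $g$ and $g^*$ are idempotent when $g$ is;
\item $(g\wedge g^*)^*=g^*\vee g\ \ge\ g\wedge g^*$.
\end{itemize}
Thus $\clone{I}\sim\minion{G}$.

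It remains to show that $\minion{G}$ is a minion core. The idempotent binary Boolean functions are $x$, $y$, $x\wedge y$ and $x\vee y$. Since $(x\wedge y)^*=x\vee y$, the function $x\wedge y$ lies in $\minion{G}$ while $x\vee y$ does not. Hence the only non-projection binary function in $\minion{G}$ is $\wedge$.

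We use the identity $f(xy)=f(yx)$. Suppose $f\in\minion{G}$ is binary and satisfies it. Then $f(01)=f(10)=:p$, and $f\le f^*$ at the point $01$ gives $p\le\overline{f(10)}=\bar p$. So $p=0$ and $f=\wedge$. Conversely, $\wedge\in\minion{G}$ satisfies this identity.

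Now let $\psi$ be an endomorphism of $\minion{G}$. Then $\psi(\wedge)$ satisfies the identity, so $\psi(\wedge)=\wedge$. The projections are fixed by the argument of Lemma~\ref{projtoproj}: the only unary member of $\minion{G}$ is the identity, and the lemma's proof uses nothing beyond that. Thus every binary function of $\minion{G}$ lies in the image of $\psi$, and Lemma~\ref{proof-minion} (single-sorted version) shows $\minion{G}$ is a minion core.

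The main obstacle is the step $\alpha(g)\in\minion{G}$ together with the closure of $\minion{G}$ under minors, which both rest on duality commuting with minors. The remaining steps are routine once the binary members of $\minion{G}$ have been enumerated.
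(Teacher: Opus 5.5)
Your proof is correct and follows the paper's argument. Both use the inclusion in one direction and $g\mapsto g\wedge g^*$ in the other, and both use the identity $f(xy)=f(yx)$ to force $\psi(\wedge)=\wedge$ before invoking Lemma~\ref{proof-minion}. Along the way you usefully make explicit several details the paper leaves implicit: that $\minion{G}$ is closed under minors, that the image of $g\mapsto g\wedge g^*$ lies in $\minion{G}$, and that the argument of Lemma~\ref{projtoproj} still applies to the minion $\minion{G}$.
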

\begin{proof}
A homomorphism from $\clone{I}$ to $\minion{G}$ is provided by the map $g\mapsto g\wedge g^*$, which is a minion homomorphism by Lemmas \ref{binary-minion-homomorphism} and \ref{dual-minion-homomorphism}.

Consider binary functions $f\in\minion{G}$ satisfying $f(xy) = f(yx)$.
Since $f\leq f^*$, we have $f(10)=f(01)=0$. Thus $f(xy) = \wedge$.
The right-hand side is the only non-projection binary function in $\minion{G}$.

Finally such $f$ exist in $\minion{G}$, namely $f=\wedge$.
\end{proof}

\begin{lemma}\label{proj} The clone $\clone{P}$ consisting of all projections is a core.
\end{lemma}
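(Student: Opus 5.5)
We show that every minion endomorphism of $\clone{P}$ is an automorphism. No homomorphism into a different minion needs to be built here: the core of $\clone{P}$ is $\clone{P}$ itself, and the inclusion together with the identity map witness $\clone{P}\sim\clone{P}$.

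The cleanest route is Lemma~\ref{projtoproj}. The clone $\clone{P}$ of all projections is idempotent, since $\pi_i^n(x,\dots,x)=x$. Hence every minion endomorphism $\zeta\colon\clone{P}\to\clone{P}$ satisfies $\zeta(\pi_i^n)=\pi_i^n$ for every projection. Every element of $\clone{P}$ is a projection, so $\zeta$ is the identity map and in particular a bijection. Therefore every endomorphism is an automorphism, and $\clone{P}$ is a minion core.

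Alternatively, the argument fits the general scheme of Lemma~\ref{proof-minion}. The binary functions of $\clone{P}$ are exactly $\pi_1^2$ and $\pi_2^2$. There are no non-projection binary functions, so the set of height-one identities $(*)$ can be taken empty. By Lemma~\ref{projtoproj}, both binary projections lie in the image of every endomorphism, and Lemma~\ref{proof-minion} then gives that $\clone{P}$ is a minion core.

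The only step needing any care is applying Lemma~\ref{projtoproj}. That lemma requires idempotency of both the domain and the codomain; here both are $\clone{P}$, so the hypothesis holds trivially. There is therefore no substantial obstacle.
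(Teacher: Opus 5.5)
Your proof is correct and is essentially the paper's: by Lemma~\ref{projtoproj}, every minion endomorphism of the idempotent clone $\clone{P}$ fixes every projection, so it is the identity and hence an automorphism. Your alternative route via Lemma~\ref{proof-minion} (with an empty set of identities) is also valid, but it is not needed.
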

\begin{proof}
By Lemma~\ref{projtoproj}, every minion endomorphism of $\clone{P}$ fixes all projections. Hence every minion endomorphism is the identity, and therefore an automorphism.
\end{proof}

The cores of all products of Boolean clones have now been classified. Figure~\ref{latcores} lists all products of clones together with the lemmas determining their cores.

\begin{figure}
\centering{
{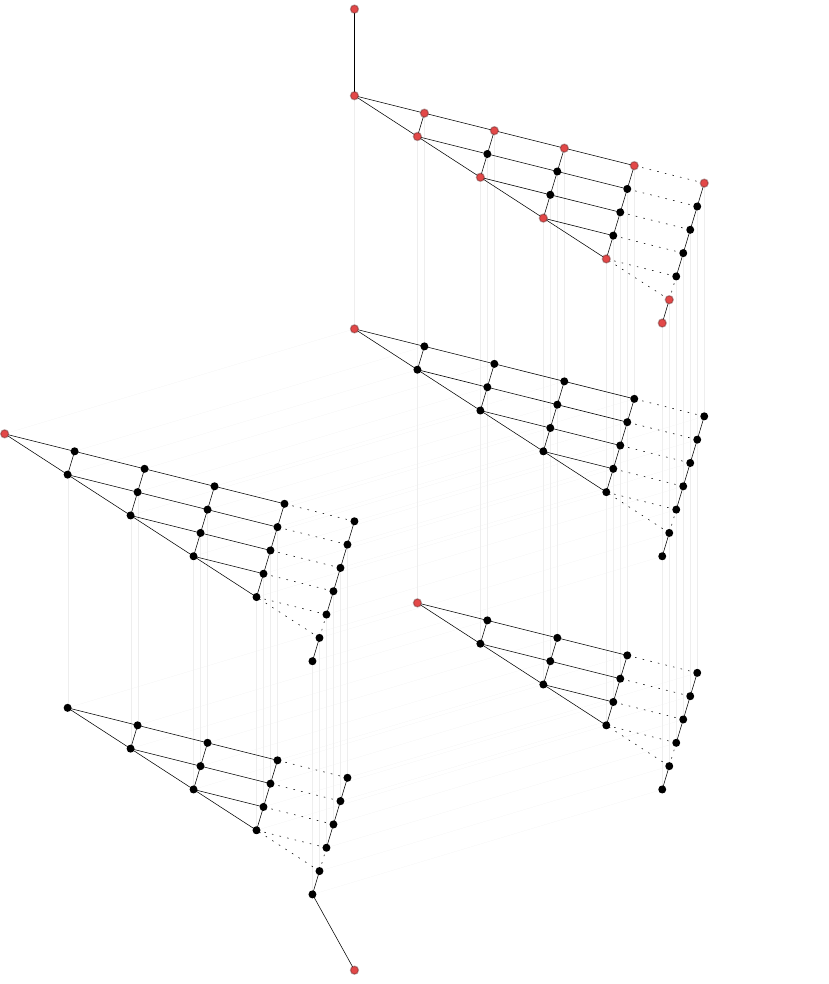}
\caption{Lemmas classifying cores of all products of Boolean clones}
\label{latcores}
}
\end{figure}

\ifau
\section*{Declarations}

\subsection*{Funding}

Funded by the European Union (ERC, POCOCOP, 101071674). Views and opinions expressed are however those of the author(s) only and do not necessarily reflect those of the European Union or the European Research Council Executive Agency. Neither the European Union nor the granting authority can be held responsible for them.

\noindent{\bf Authors' contributions} Not applicable.
\smallskip

\noindent{\bf Competing interests} Not applicable. 
\smallskip

\noindent{\bf Availability of data and materials} Not applicable. 
\smallskip

\noindent{\bf Ethical approval} Not applicable. 

\fi
\bibliographystyle{plain} 
\bibliography{productClones} 

@article {wonderland,
    AUTHOR = {Barto, Libor and Opršal, Jakub and Pinsker, Michael},
     TITLE = {The wonderland of reflections},
   JOURNAL = {Israel J. Math.},
  FJOURNAL = {Israel Journal of Mathematics},
    VOLUME = {223},
      YEAR = {2018},
    NUMBER = {1},
     PAGES = {363--398},
      ISSN = {0021-2172,1565-8511},
   MRCLASS = {08A40 (03C05 03C15 68Q17 68Q25)},
  MRNUMBER = {3773066},
MRREVIEWER = {Pedro\ S\'anchez Terraf},
       DOI = {10.1007/s11856-017-1621-9},
       URL = {https://doi.org/10.1007/s11856-017-1621-9},
}

@book{post,
 ISBN = {9780691095707},
 URL = {http://www.jstor.org/stable/j.ctt1bgzb1r},
 author = {EMIL L. POST},
 publisher = {Princeton University Press},
 title = {The Two-Valued Iterative Systems of Mathematical Logic. (AM-5)},
 urldate = {2024-07-17},
 year = {1941}
}

@book{bergman,
  title={Universal Algebra: Fundamentals and Selected Topics},
  author={Bergman, C.},
  isbn={9781439851296},
  lccn={2011036515},
  series={Chapman \& Hall Pure and Applied Mathematics},
  url={https://books.google.cz/books?id=QXi3BZWoMRwC},
  year={2011},
  publisher={Taylor \& Francis}
}

@article{bodnarchuk,
  title={Galois theory for post algebras. I},
  author={Victor Bodnarchuk and L. A. Kaluzhnin and V. N. Kotov and Boris A. Romov},
  journal={Cybernetics},
  year={1969},
  volume={5},
  pages={243-252},
  url={https://api.semanticscholar.org/CorpusID:121821029}
}

@article{geiger,
author = {David Geiger},
title = {{Closed systems of functions and predicates.}},
volume = {27},
journal = {Pacific Journal of Mathematics},
number = {1},
publisher = {Pacific Journal of Mathematics, A Non-profit Corporation},
pages = {95 -- 100},
year = {1968},
}

@article {approach,
    AUTHOR = {Barto, Libor and Bulín, Jakub and Krokhin, Andrei and Opršal, Jakub},
     TITLE = {Algebraic approach to promise constraint satisfaction},
   JOURNAL = {J. ACM},
  FJOURNAL = {Journal of the ACM},
    VOLUME = {68},
      YEAR = {2021},
    NUMBER = {4},
     PAGES = {Art. 28, 66},
      ISSN = {0004-5411,1557-735X},
   MRCLASS = {08A70 (68Q17 68Q25)},
  MRNUMBER = {4313115},
MRREVIEWER = {Ágnes\ Szendrei},
       DOI = {10.1145/3457606},
       URL = {https://doi.org/10.1145/3457606},
}

@article{romov,
  title={On the lattice of subalgebras of direct products of post
algebras of finite degree},
  author={Romov, BA},
  journal={Mathematical Models of Complex Systems},
  pages={156--168},
  year={1973},
  publisher={Izd. Inst. Kibernetiki Akad. Nauk UkrSSR Kiev}
}

@article {vucaj,
    AUTHOR = {Bodirsky, Manuel and Vucaj, Albert},
     TITLE = {Two-element structures modulo primitive positive
              constructability},
   JOURNAL = {Algebra Universalis},
  FJOURNAL = {Algebra Universalis},
    VOLUME = {81},
      YEAR = {2020},
    NUMBER = {2},
     PAGES = {Paper No. 20, 17},
      ISSN = {0002-5240,1420-8911},
   MRCLASS = {06E30 (08A70 08B05)},
  MRNUMBER = {4085967},
MRREVIEWER = {Keith\ A.\ Kearnes},
       DOI = {10.1007/s00012-020-0647-8},
       URL = {https://doi.org/10.1007/s00012-020-0647-8},
}

@book{trotter,
  title={Combinatorics and Partially Ordered Sets: Dimension Theory},
  author={Trotter, W.T.},
  isbn={9780801869778},
  lccn={91038443},
  series={Johns Hopkins Studies in Nineteenth C Architecture Series},
  url={https://books.google.cz/books?id=VAEgDWgFZ10C},
  year={1992},
  publisher={Johns Hopkins University Press}
}

@book {universal1,
    AUTHOR = {McKenzie, Ralph N. and McNulty, George F. and Taylor, Walter
              F.},
     TITLE = {Algebras, lattices, varieties. {V}ol. 1},
      NOTE = {Reprint of [MR0883644],
              \copyright1969},
 PUBLISHER = {AMS Chelsea Publishing/American Mathematical Society,
              Providence, RI},
      YEAR = {2018},
     PAGES = {xii+367},
      ISBN = {978-1-4704-4295-8},
   MRCLASS = {08-01 (01A75 06-01)},
  MRNUMBER = {3793673},
MRREVIEWER = {James\ B.\ Nation},
       DOI = {10.1090/chel/383.H},
       URL = {https://doi.org/10.1090/chel/383.H},
}

@book {universal2,
    AUTHOR = {Freese, Ralph S. and McKenzie, Ralph N. and McNulty, George F.
              and Taylor, Walter F.},
     TITLE = {Algebras, lattices, varieties. {V}ol. {II}},
    SERIES = {Mathematical Surveys and Monographs},
    VOLUME = {268},
 PUBLISHER = {American Mathematical Society, Providence, RI},
      YEAR = {2022},
     PAGES = {xvii+475},
      ISBN = {978-1-4704-6797-5; 978-4704-7129-3},
   MRCLASS = {08-02 (03C05 06-02 06Bxx 08Bxx)},
  MRNUMBER = {4496007},
MRREVIEWER = {James\ B.\ Nation},
}

@book {universal3,
    AUTHOR = {Freese, Ralph S. and McKenzie, Ralph N. and McNulty, George F.
              and Taylor, Walter F.},
     TITLE = {Algebras, lattices, varieties. {V}ol. {III}},
    SERIES = {Mathematical Surveys and Monographs},
    VOLUME = {269},
 PUBLISHER = {American Mathematical Society, Providence, RI},
      YEAR = {[2022] \copyright2022},
     PAGES = {xvii+430},
      ISBN = {978-1-4704-6798-2; 978-1-4704-7130-9},
   MRCLASS = {08-02 (03C05 06-02 06Bxx 08Bxx)},
  MRNUMBER = {4563240},
MRREVIEWER = {James\ B.\ Nation},
}

@article{florian1,
author = {Bodirsky, Manuel and Starke, Florian and Vucaj, Albert},
title = {Smooth digraphs modulo primitive positive constructability and cyclic loop conditions},
journal = {International Journal of Algebra and Computation},
volume = {31},
number = {05},
pages = {929-967},
year = {2021},
doi = {10.1142/S0218196721500442},
URL = {https://doi.org/10.1142/S0218196721500442},
eprint = {https://doi.org/10.1142/S0218196721500442}
}

@misc{florian2,
      title={Finite Simple Groups in the Primitive Positive Constructability Poset}, 
      author={Sebastian Meyer and Florian Starke},
      year={2025},
      eprint={2409.06487},
      archivePrefix={arXiv},
      primaryClass={math.GR},
      url={https://arxiv.org/abs/2409.06487}, 
}

@article{florian3,
   title={Maximal Digraphs with Respect to Primitive Positive Constructability},
   volume={42},
   ISSN={1439-6912},
   url={http://dx.doi.org/10.1007/s00493-022-4918-1},
   DOI={10.1007/s00493-022-4918-1},
   number={S1},
   journal={Combinatorica},
   publisher={Springer Science and Business Media LLC},
   author={Bodirsky, Manuel and Starke, Florian},
   year={2022},
   month=oct, pages={997–1010}
}

@article{Taimanov1,
url = {https://doi.org/10.1515/dma-2019-0012},
title = {On some properties of vector functions of Boolean algebra},
author = {Vladimir A. Taimanov},
pages = {131--141},
volume = {29},
number = {2},
journal = {Discrete Mathematics and Applications},
doi = {doi:10.1515/dma-2019-0012},
year = {2019},
lastchecked = {2026-05-25}
}

@article{Taimanov2,
url = {https://doi.org/10.1515/dma-2023-0017},
title = {On bases of all closed classes of Boolean vector functions},
author = {Vladimir A. Taimanov},
pages = {189--198},
volume = {33},
number = {3},
journal = {Discrete Mathematics and Applications},
doi = {doi:10.1515/dma-2023-0017},
year = {2022},
lastchecked = {2026-05-25}
}

@article{Taimanov3,
url = {https://doi.org/10.1515/dma-2020-0024},
title = {On bases of closed classes of Boolean vector functions},
author = {Vladimir A. Taimanov},
pages = {273--283},
volume = {30},
number = {4},
journal = {Discrete Mathematics and Applications},
doi = {doi:10.1515/dma-2020-0024},
year = {2020},
lastchecked = {2026-05-25}
}

@article{kapytka,
author = {Barto, Libor and Kapytka, Maryia},
year = {2024},
month = {11},
pages = {},
title = {Multisorted Boolean clones determined by binary relations up to minion homomorphisms},
volume = {86},
journal = {Algebra universalis},
doi = {10.1007/s00012-024-00878-0}
}

@misc{david,
      title={On the lattice of multi-sorted relational clones on a two-element set}, 
      author={Vojtěch David and Dmitriy Zhuk},
      year={2025},
      eprint={2505.06033},
      archivePrefix={arXiv},
      primaryClass={math.LO},
      url={https://arxiv.org/abs/2505.06033}, 
}

@InCollection{cspsurvey,
  author =	{Barto, Libor and Krokhin, Andrei and Willard, Ross},
  title =	{{Polymorphisms, and How to Use Them}},
  booktitle =	{The Constraint Satisfaction Problem: Complexity and Approximability},
  pages =	{1--44},
  series =	{Dagstuhl Follow-Ups},
  ISBN =	{978-3-95977-003-3},
  ISSN =	{1868-8977},
  year =	{2017},
  volume =	{7},
  editor =	{Krokhin, Andrei and Zivny, Stanislav},
  publisher =	{Schloss Dagstuhl -- Leibniz-Zentrum f{\"u}r Informatik},
  address =	{Dagstuhl, Germany},
  URL =		{https://drops.dagstuhl.de/entities/document/10.4230/DFU.Vol7.15301.1},
  URN =		{urn:nbn:de:0030-drops-69595},
  doi =		{10.4230/DFU.Vol7.15301.1}
}

\end{document}